\documentclass[a4paper,11pt,twoside]{article}
\usepackage{times}
\usepackage{makeidx}
\usepackage{amsfonts}
\usepackage{fullpage}
\usepackage{amsmath}
\usepackage{amssymb}
\usepackage{amsthm}
\usepackage{appendix}
\usepackage{manfnt}
\usepackage{marginnote}
\usepackage{xfrac}
\newtheorem{thm}{Theorem}[section]
\newtheorem{lem}[thm]{Lemma}
\newtheorem{prop}[thm]{Proposition}
\newtheorem{cor}[thm]{Corollary}
\theoremstyle{definition}
\newtheorem{ex}[thm]{Example}
\newtheorem{defn}[thm]{Definition}

\theoremstyle{remark}
\newtheorem{rmk}[thm]{Remark}
\usepackage{theoremref}
\usepackage[all]{xy}
\usepackage{mathrsfs}
\newcommand{\op}{\text{op}}

\newcommand{\fp}{\text{fp}}

\newcommand{\Ab}{\textbf{Ab}}
\newcommand{\Set}{\textbf{Set}}

\newcommand{\Ext}{\text{Ext}}

\newcommand{\rmods}{\textbf{mod}\text{-}}
\newcommand{\lmods}{\text{-}\textbf{mod}}
\newcommand{\Rmods}{\textbf{Mod}\text{-}}
\newcommand{\Lmods}{\text{-}\textbf{Mod}}

\begin{document}
\title{Duality and contravariant functors in the representation theory of artin algebras}
\author{Samuel Dean}
\maketitle

\abstract{We know that the model theory of modules leads to a way of obtaining definable categories of modules over a ring $R$ as the kernels of certain functors $(R\Lmods)^\op\to\Ab$ rather than of functors $R\Lmods\to\Ab$ which are given by a pp pair. This paper will give various algebraic characterisations of these functors in the case that $R$ is an artin algebra.

Suppose that $R$ is an artin algebra. An additive functor $G:(R\Lmods)^\op\to\Ab$ preserves inverse limits and $G|_{(R\lmods)^\op}:(R\lmods)^\op\to\Ab$ is finitely presented if and only if there is a sequence of natural transformations $(-,A)\to(-,B)\to G\to 0$ for some $A,B\in R\lmods$ which is exact when evaluated at any left $R$-module. Any additive functor $(R\Lmods)^\op\to\Ab$ with one of these equivalent properties has a definable kernel, and every definable subcategory of $R\Lmods$ can be obtained as the kernel of a family of such functors.

In the final section a generalised setting is introduced, so that our results apply to more categories than those of the form $R\Lmods$ for an artin algebra $R$. That is, our results are extended to those locally finitely presented $K$-linear categories whose finitely presented objects form a dualising variety, where $K$ is a commutative artinian ring.}
\tableofcontents
\section{Introduction}
The aim of this paper is to describe, in the context of artin algebras, the contravariant functors which arise from pp pairs in the model theory of modules. This provides an analogy with the relationship between pp pairs and the covariant functors which arise from them. We will also describe the relationship between the contravariant and covariant functors which arise from pp pairs.
\\\\\textbf{Acknowledgements.} The author would like to thank his PhD supervisor Mike Prest for valuable comments and motivation. He would also like to thank Lorna Gregory for useful discussions and encouragement (especially for pointing out other work on dualising varieties) and Jeremy Russell for stimulating conversations.
\subsection{Covariant functors which arise from pp pairs and definable subcategories}
We begin by recalling the notion of a pp pair and the relationship between pp pairs and covariant functors.

For a ring $R$, we write $R\Lmods$ for the category of left $R$-modules, $\Rmods R$ for the category of right $R$-modules, $R\lmods$ for the category of finitely presented left $R$-modules, and $\rmods R$ for the category of finitely presented right $R$-modules.

Let $R$ be a ring. A \textbf{pp formula (in the language of left $R$-modules)} is a condition $\theta(x_1,\dots,x_n)$ in the free variables $x_i$ of the form
\begin{displaymath}
\exists y_1,\dots y_m A\left(\begin{matrix}
x_1\\\vdots\\x_n\end{matrix}\right)=B\left(\begin{matrix}
y_1\\\vdots\\y_m\end{matrix}\right)
\end{displaymath}
where $A$ and $B$ are appropriately sized matrices with entries from $R$. For such a formula and any left $R$-module $M$, we can take the \textbf{solution set}
\begin{displaymath}
\theta(M)=\left\{(a_1,\dots,a_n)\in M^n:\exists b_1,\dots, b_m\in M\text{ such that }A\left(\begin{matrix}
a_1\\\vdots\\a_n\end{matrix}\right)=B\left(\begin{matrix}
b_1\\\vdots\\b_m\end{matrix}\right)\right\}\subseteq M^n,
\end{displaymath}
which is actually a subgroup of $M^n$. A \textbf{pp pair} $\varphi/\psi$ consists of pp formulas $\varphi$ and $\psi$, in the same free variables, such that $\psi(M)\subseteq \varphi(M)$ for any left $R$-module $M$ (although it is enough to check this condition on finitely presented modules \cite[1.2.23]{prest2009}).

For a pp pair $\varphi/\psi$ in the language of left $R$-modules, we denote the corresponding functor by $\mathbb{F}_{\varphi/\psi}:R\Lmods\to\Ab$, which takes a left $R$-module $M$ to the quotient group $\varphi(M)/\psi(M)$, where $\varphi(M)$ and $\psi(M)$ are, respectively, the solution sets of $\varphi$ and $\psi$ in $M$. The solution sets $\varphi(M)$ and $\psi(M)$ are subgroups of $M^n$ for some positive integer $n$, and the solutions pp formulas are preserved by homomorphisms, so $\mathbb{F}_{\varphi/\psi}$ is defined on morphisms in the obvious way.

\begin{thm}\thlabel{covarconditions}(From \cite[Subsection 10.2.8]{prest2009}) For an additive functor $F:R\Lmods\to\Ab$, the following are equivalent.
\begin{list}{*}{}
\item[(a)]$F$ preserves direct limits and is finitely presented when restricted to $R\lmods$.
\item[(b)]There is a sequence of natural transformations 
\begin{displaymath}
\xymatrix{(B,-)\ar[r]&(A,-)\ar[r]&F\ar[r]&0}
\end{displaymath}
which is exact when evaluated at any left $R$-module, where $A$ and $B$ are finitely presented left $R$-modules.
\item[(c)]$F\cong \mathbb{F}_{\varphi/\psi}$ for some pp pair $\varphi/\psi$.
\end{list}
\end{thm}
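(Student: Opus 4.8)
The plan is to prove the cycle of implications (b) $\Rightarrow$ (a) $\Rightarrow$ (c) $\Rightarrow$ (b), keeping in mind that (b) $\Leftrightarrow$ (c) is essentially the classical correspondence between pp pairs and finitely presented functors on $R\lmods$ extended to $R\Lmods$ by direct-limit continuity, while the real content to verify carefully is the passage through (a). For (b) $\Rightarrow$ (a): the representable functors $(A,-)$ and $(B,-)$ for $A,B\in R\lmods$ preserve direct limits, since $A$ and $B$ are finitely presented; a cokernel of a natural transformation between direct-limit-preserving functors into $\Ab$ again preserves direct limits (direct limits are exact in $\Ab$ and commute with cokernels), so $F$ preserves direct limits. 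Restricted to $R\lmods$, the same sequence $(B,-)\to(A,-)\to F\to 0$ exhibits $F|_{R\lmods}$ as the cokernel of a map of representable, hence projective, objects in the functor category $(R\lmods,\Ab)$, so $F|_{R\lmods}$ is finitely presented by definition.

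For (a) $\Rightarrow$ (c): start from $F|_{R\lmods}$, which is finitely presented, so there is an exact sequence $(B,-)\to(A,-)\to F|_{R\lmods}\to 0$ in $(R\lmods,\Ab)$ with $A,B\in R\lmods$. By the classification of finitely presented functors on $R\lmods$ via pp pairs (this is the finitely-presented-module version of the statement, available in \cite{prest2009}), $F|_{R\lmods}\cong\mathbb{F}_{\varphi/\psi}|_{R\lmods}$ for some pp pair $\varphi/\psi$. Now both $F$ and $\mathbb{F}_{\varphi/\psi}$ preserve direct limits on $R\Lmods$ — $F$ by hypothesis, $\mathbb{F}_{\varphi/\psi}$ because pp formulas and their solution sets commute with direct limits — and every left $R$-module is a direct limit of finitely presented ones, so the isomorphism $F|_{R\lmods}\cong\mathbb{F}_{\varphi/\psi}|_{R\lmods}$ extends uniquely to a natural isomorphism $F\cong\mathbb{F}_{\varphi/\psi}$ on all of $R\Lmods$. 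Finally (c) $\Rightarrow$ (b): given $\varphi/\psi$, choose a free realisation of $\varphi$, i.e. a finitely presented module $A$ with a distinguished tuple $\bar a$ generating $\varphi$ freely, so that $(A,-)\twoheadrightarrow\mathbb{F}_\varphi$ (evaluation at $\bar a$), and similarly a free realisation $B$ of $\psi$; the inclusion $\psi\subseteq\varphi$ induces a map $(B,-)\to(A,-)$ making $(B,-)\to(A,-)\to\mathbb{F}_{\varphi/\psi}\to 0$ exact at every module, again because free realisations and the relevant exactness can be checked on finitely presented modules and then transported along direct limits.

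The main obstacle is the bookkeeping in (a) $\Rightarrow$ (c): one must be careful that "finitely presented when restricted to $R\lmods$" really does feed into the pp-pair classification in the form needed, and that the extension of the natural isomorphism from $R\lmods$ to $R\Lmods$ is genuinely well-defined and natural — this uses that the canonical functor taking a direct-limit-preserving functor on $R\Lmods$ to its restriction to $R\lmods$ is an equivalence onto the finitely presented functors on $R\lmods$, which is precisely the content being invoked. Everything else is a routine translation between the module-theoretic language of pp formulas and free realisations and the categorical language of presentations by representable functors; since the present paper is about the \emph{contravariant} analogue, I would cite \cite[Subsection 10.2.8]{prest2009} for the covariant statement rather than reprove it in detail, and spend the effort instead on making the dualisation in the later sections precise.
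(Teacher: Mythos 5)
The paper offers no proof of \thref{covarconditions} at all --- it is quoted from \cite[Subsection 10.2.8]{prest2009} --- so there is no argument to compare yours against step by step. Your cycle (b)$\Rightarrow$(a)$\Rightarrow$(c)$\Rightarrow$(b) is the standard decomposition, and the first two implications are fine as sketched: cokernels of natural transformations between direct-limit-preserving functors preserve direct limits, restriction to $R\lmods$ gives a presentation by representables, and (a)$\Rightarrow$(c) correctly combines the classification of finitely presented functors on $R\lmods$ by pp pairs with the fact that pp solution sets commute with direct limits and the uniqueness of direct-limit-preserving extensions (which is \thref{genunqext} in the paper).

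The explicit construction you give for (c)$\Rightarrow$(b) is, however, false as stated. If $(A,\overline a)$ is a free realisation of $\varphi$ and $(B,\overline b)$ one of $\psi$, the induced sequence $(B,-)\to(A,-)\to\mathbb{F}_{\varphi/\psi}\to 0$ need not be exact at $(A,-)$: the kernel of $(A,M)\to\varphi(M)/\psi(M)$ is $\{h:A\to M \text{ with } h(\overline a)\in\psi(M)\}$, whereas the image of $(B,M)\to(A,M)$ consists only of maps factoring through the induced morphism $A\to B$, and a map $h$ with $h(\overline a)\in\psi(M)$ need not so factor. Concretely, let $R=k[x]/(x^2)$, $\varphi(v)=\exists w\,(v\doteq xw)$, $\psi(v)=(v\doteq 0)$; then $(R,x)$ is a free realisation of $\varphi$ and $(0,0)$ one of $\psi$, and your sequence evaluated at $M$ reads $0\to M\to xM\to 0$ with middle map $m\mapsto xm$, which fails to be exact because its kernel $\text{ann}_M(x)$ is nonzero in general (and choosing a different free realisation of $\psi$ does not repair this). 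The correct argument either identifies the kernel above as $\mathbb{F}_\rho$ for a pp formula $\rho$ manufactured from a presentation of $A$ together with $\psi$ (so that a free realisation of $\rho$, not of $\psi$, supplies the module $B$), or else observes that $F_\varphi$ and $F_\psi$ are images of maps between representables in the abelian category $(R\lmods)\lmods$, hence finitely presented, so their quotient admits some presentation by representables, which then extends along direct limits. Since you say you would in any case cite \cite[Subsection 10.2.8]{prest2009} --- exactly as the paper does --- this does not damage anything downstream, but the sketch should not assert the incorrect presentation.
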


A \textbf{definable subcategory} of a module category $R\Lmods$ is the kernel of a family of additive functors $R\Lmods\to\Ab$, each of which is given by a pp pair.

Part of the importance of definable subcategories of $R\Lmods$ is that they correspond to the closed subsets of a topological space which is an invariant of the category $R\Lmods$. The \textbf{Ziegler spectrum} of $R$ is a topological space $\text{Zg}_R$ whose set of points contains of element of each isomorphism class of the set of all indecomposable pure-injective left $R$-modules. The points of $\text{Zg}_R$ form a set (as opposed to a proper class, see \cite[4.3.38]{prest2009}). The closed sets of $\text{Zg}_R$ are those of the form $\mathcal{D}\cap\text{Zg}_R$ for a definable subcategory $\mathcal{D}\subseteq R\Lmods$. A definable subcategory $\mathcal{D}$ can be recovered from the set $\mathcal{D}\cap\text{Zg}_R$ (see \cite[5.1.4]{prest2009}).

\thref{covarconditions} holds when $R$ is any ring, but suppose that $R$ is an artin algebra. Under this assumption, we will prove an analogous result which gives necessary and sufficient conditions for a functor $(R\Lmods)^\op\to\Ab$ to be given by a pp pair. Furthermore, we will show that any functor $(R\Lmods)^\op\to\Ab$ which satisfies these conditions has a definable kernel, and that every definable subcategory of $R\Lmods$ can be obtained as the kernel of a family of such functors. Therefore, we have a new way of confirming definability of subcategories in this context. The algebraic characterisation of these contravariant functors allows for more streamlined proofs of the definability of certain subcategories (e.g. the argument for the definability of the subcategory of modules over a tubular algebra which are of a fixed slope in \cite{harland2010}).
\subsection{Dualisation and predualisation of functors}\label{defs}
A key ingredient in this paper is dualisation, so we now introduce that. Throughout this paper $K$ will denote a commutative artinian ring unless stated otherwise.

Let $J$ be the $K$-module
\begin{displaymath}
J=\bigoplus^n_{i=1}E(S_i)
\end{displaymath}
where $\{S_i:i=1,\dots,n\}$ is a minimal set of representatives of the simple left $K$-modules and $E(S_i)$ denotes the injective hull of $S_i$ for each $i=1,\dots,n$. In particular, if $K$ is a field then $J\cong K$. Note that $J$ is an injective cogenerator of $K\Lmods$ (see, for example, \cite[18.2, 18.15]{andersonfuller}). 

For a $K$-module $M$ we define its \textbf{dual} to be the $K$-module $M^*=\text{Hom}_K(M,J)$. For a linear map $f:M\to N$ between $K$-modules $M$ and $N$, we define its \textbf{dual} to be the induced linear map $f^*:N^*\to M^*:t\mapsto tf$. 

For any $K$-module $M$, there is a canononical $K$-linear map $\eta_M:M\to M^{**}$ which sends an element $x\in M$ to the linear map $\eta_M(x)\in M^{**}$ given by $\eta_M(x)(f)=f(x)$ for all $f\in M^*$.

It is well known (see \cite[II.3]{ars}) that, for any finitely generated $K$-module $M$, $M^*$ is finitely generated and the canonical map $\eta_M:M\to M^{**}$ is an isomorphism.

For a $K$-linear category $A$, consider an additive functor $F:A\to \Ab$. It has a unique factorisation $F=UF'$ where $F':A\to K\Lmods$ is $K$-linear and $U:K\Lmods\to\Ab$ is the forgetful functor. That is, for every object $a\in A$, $Fa$ has a hidden $K$-module structure, and every morphism in $A$ is sent to a $K$-linear map.\footnote{In order to define the $K$-module structure on $Fa$ for an object $a\in A$, consider that each element $k\in K$ gives an endomorphism $k1_a:a\to a$, and hence a homomorphism $F(k1_a):Fa\to Fa$.} Therefore, we can make the following definition.
\begin{defn}Let $A$ be a $K$-linear category and let $F:A\to\Ab$ be an additive functor. The \textbf{dual} of $F$ is the functor $F^*:A^\op\to\Ab$ which sends an object $a\in A$ to $(Fa)^*$ and sends a morphism $(f:a\to b)\in A$ to the induced map
\begin{displaymath}
(Ff)^*:(Fb)^*\to (Fa)^*.
\end{displaymath}
\end{defn}

\begin{ex}If $A=R$, a $K$-algebra, in the previous definition, we recover the usual process of obtaining a right $R$-module $M^*$ from a left $R$-module $M$. 
\end{ex}

Since $J$ is a cogenerator of $K\Lmods$, $M^*=0$ iff $M=0$ for any $K$-module $M$. This fact is crucial to later observations.

If $R$ is an artin algebra over $K$ then a left or right $R$-module is finitely generated iff it is finitely presented iff it is finitely generated as a $K$-module. It follows that, for any finitely presented (left or right) $R$-module $M$, $M^*$ is also finitely presented and the canonical map $\eta_M:M\to M^{**}$ is an isomorphism of $R$-modules. Therefore, as is well known, there is an equivalence $$(-)^*:(R\lmods)^\op\to \rmods R$$
with inverse $$(-)^*:\rmods R\to(R\lmods)^\op.$$
For a functor $F:R\Lmods\to\Ab$ we define its \textbf{predual} $F_*:(\Rmods R)^\op\to\Ab$ to be the functor given by $F_*(M)=F(M^*)$. Similar definitions can be made for a functor $R\lmods\to\Ab$, $\Rmods R\to\Ab$ or $(R\Lmods)^\op\to\Ab$.

Frequent use will be made of the following result.
\begin{lem}\thlabel{homtens}For any left $R$-module $M$ and right $R$-module $N$, there is are isomorphisms
\begin{displaymath}
(M,N^*)\cong(N\otimes M)^*\cong(N,M^*)
\end{displaymath}
which are natural in $M$ and $N$.
\end{lem}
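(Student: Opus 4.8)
The plan is to produce the two isomorphisms via tensor-hom adjunction together with the defining property of the dual $(-)^* = \Hom_K(-,J)$, and then to check naturality. First I would recall that for a left $R$-module $M$ and a right $R$-module $N$, the group $N\otimes_R M$ carries a natural $K$-module structure (coming from the $K$-algebra structure on $R$, or equivalently from the $K$-action on either tensor factor), so $(N\otimes M)^* = \Hom_K(N\otimes_R M, J)$ makes sense. The standard adjunction isomorphism
\begin{displaymath}
\Hom_K(N\otimes_R M, J)\cong \Hom_R(M,\Hom_K(N,J))
\end{displaymath}
identifies a map $g:N\otimes_R M\to J$ with the map $m\mapsto (n\mapsto g(n\otimes m))$; one checks the right-hand side lands in $R$-module homomorphisms $M\to N^*$ precisely because $g$ kills the relations $nr\otimes m - n\otimes rm$. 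This gives $(N\otimes M)^*\cong (M,N^*)$. Symmetrically, using that $N\otimes_R M\cong M\otimes_{R^{\op}} N$ and applying the same adjunction on the other side yields $(N\otimes M)^*\cong (N,M^*)$.

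Next I would verify naturality. Given $f:M\to M'$ in $R\Lmods$, one must check that the square relating $(M',N^*)\to(M,N^*)$, the map $(N\otimes M')^*\to(N\otimes M)^*$ induced by $1_N\otimes f$, commutes; this is a direct diagram chase on elements, tracing a map $g:N\otimes_R M'\to J$ around both ways and seeing that both give $m\mapsto(n\mapsto g(n\otimes f(m)))$. The same is done for a morphism $N\to N'$ in $\Rmods R$, and for the second isomorphism $(N\otimes M)^*\cong (N,M^*)$ by the symmetric argument.

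I do not expect any genuine obstacle here — this is the classical tensor-hom adjunction dressed in the $K$-dual notation, and the only thing requiring care is bookkeeping: keeping track of which side the $R$-action sits on, and confirming that all the maps in sight are $K$-linear (so that the duals are taken in the right category), which follows from the remarks preceding the lemma about the hidden $K$-structure on the values of additive functors. If one wanted to be maximally slick, one could instead deduce the statement by applying $(-)^*$ to the two adjunction isomorphisms $(N\otimes M, P)\cong(M,\Hom(N,P))\cong(N,\Hom(M,P))$ with $P$ ranging suitably, but the direct route above is cleaner and self-contained.
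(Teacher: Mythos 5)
Your proposal is correct and is essentially the paper's own argument: the paper simply cites the hom-tensor adjunction $\Hom_K(N\otimes_R M,J)\cong\Hom_R(M,\Hom_K(N,J))$ (and its mirror image) from Anderson--Fuller, which is exactly what you spell out, with the same bookkeeping about the $K$-module structure on $N\otimes_R M$ and on the duals.
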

\begin{proof}
Both isomorphisms are instances of the hom-tensor adjunction (see, for example, \cite[19.11]{andersonfuller}). Alternatively, one can show that $(M,N^*)\cong (N,M^*)$ directly, which gives an adjunction $\Phi\dashv\Psi$ where the functors $\Phi:R\Lmods\to(\Rmods R)^\op$ and $\Psi:(\Rmods R)^\op\to R\Lmods$ are given by dualisation (the unit of adjunction at $M\in R\Lmods$ is the canonical map $M\to M^{**}$).
\end{proof}
\begin{rmk}[A note for those who know about monads]\thlabel{monadic}The adjunction $\Phi\dashv\Psi$ referred to in the proof of \thref{homtens} can be shown to be weakly monadic or ``weakly tripleable"(see \cite[V.7 Exercises]{maclane1998} and combine with \cite[18.1(d), 18.14(d)]{andersonfuller}). Thus double-dualisation gives a monad on $R\Lmods$ whose category of algebras is $(\Rmods R)^\op$. This mitigates the absence of an equivalence $R\Lmods\simeq (\Rmods R)^\op$, since dualisation gives a monadic functor $(-)^*:(\Rmods R)^\op\to R\Lmods$.
\end{rmk}
Some of the results in this paper require the properties of tensor products of functors. We briefly summarise these.
\begin{defn}For a small pre-additive category $A$, and functors $M:A\to\Ab$ and $N:A^\op\to\Ab$, their \textbf{tensor product} is given by the coend formula (see \cite[IX.6]{maclane1998} for ends and coends)
\begin{displaymath}
N\otimes_A M=\int^{a\in A}(Na)\otimes_\mathbb{Z}(Ma)\in\Ab.
\end{displaymath}
\end{defn}
\begin{rmk}If $A$ is a $K$-linear category, then for any $M\in A\Lmods$ and $N\in\Rmods A$, $N\otimes_A M$ is a $K$-module, and coincides with the coend $\int^{a\in A}(Na)\otimes_K(Ma)\in K\Lmods$.
\end{rmk}
This definition of tensor products is similar to that given in \cite[IX.6]{maclane1998} for $\Set$-valued functors, and coincides with the definition given in \cite{fisher1968}.
\begin{lem}\thlabel{tensyoneda}Let $A$ be a small pre-additive category. For any additive functor $F:A\to\Ab$ and any object $a\in A$, there is an isomorphism
\begin{displaymath}
A(-,a)\otimes _A F\cong Fa
\end{displaymath}
which is natural in $F$ and $a$.
\end{lem}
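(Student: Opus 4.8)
The plan is to recognise this as the co-Yoneda (density) formula and to verify it directly from the coequalizer presentation of the coend. Recall that for a small pre-additive category $A$ and functors $N:A^\op\to\Ab$, $M:A\to\Ab$, the coend $\int^{b\in A}(Nb)\otimes(Mb)$ is the coequalizer of the pair
\[
\bigoplus_{b,c\in A}(Nc)\otimes A(b,c)\otimes(Mb)\;\rightrightarrows\;\bigoplus_{b\in A}(Nb)\otimes(Mb),
\]
where the two maps are built, respectively, from the contravariant action of $A$ on $N$ (giving $(Nc)\otimes A(b,c)\to Nb$, namely $h\otimes g\mapsto h\circ g$) and from the covariant action of $A$ on $M$ (giving $A(b,c)\otimes(Mb)\to Mc$, namely $g\otimes x\mapsto (Mg)(x)$). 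Specialising to $N=A(-,a)$ and $M=F$, I would write $\pi:\bigoplus_b A(b,a)\otimes Fb\twoheadrightarrow A(-,a)\otimes_A F$ for the resulting quotient map, so that on generators one has the coend relation $\pi\big((h\circ g)\otimes x\big)=\pi\big(h\otimes(Fg)(x)\big)$ for all $g\in A(b,c)$, $h\in A(c,a)$, $x\in Fb$.

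Next I would write down two mutually inverse maps. Define $\varepsilon:\bigoplus_b A(b,a)\otimes Fb\to Fa$ on generators by $\varepsilon(g\otimes x)=(Fg)(x)$; functoriality of $F$ (the identity $F(h\circ g)(x)=(Fh)((Fg)(x))$) shows that $\varepsilon$ coequalizes the pair above, so it descends to a map $\bar\varepsilon:A(-,a)\otimes_A F\to Fa$ with $\bar\varepsilon\circ\pi=\varepsilon$. In the other direction, let $\iota:Fa\to A(-,a)\otimes_A F$ be given by $\iota(x)=\pi(1_a\otimes x)$, using the summand indexed by $b=a$. Then $\bar\varepsilon\iota(x)=(F1_a)(x)=x$, so $\bar\varepsilon\iota=1_{Fa}$. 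For the reverse composite it suffices to check agreement with the identity on the generating classes $\pi(g\otimes x)$: taking $c=a$ and $h=1_a$ in the coend relation gives $\pi(g\otimes x)=\pi\big(1_a\otimes(Fg)(x)\big)=\iota\big((Fg)(x)\big)=\iota\bar\varepsilon\big(\pi(g\otimes x)\big)$, and since these classes generate $A(-,a)\otimes_A F$ we conclude $\iota\bar\varepsilon=1$. Hence $\bar\varepsilon$ is an isomorphism.

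Finally, naturality in $F$ and in $a$ is immediate from the explicit formulas: a natural transformation $F\to F'$ and a morphism $a\to a'$ in $A$ are each visibly compatible with $g\otimes x\mapsto(Fg)(x)$ and with $x\mapsto\pi(1_a\otimes x)$, so the isomorphisms $\bar\varepsilon$ assemble into a natural one (alternatively, argue via the universal property of the coend, since $\varepsilon$ is a dinatural wedge). I do not expect any genuine obstacle here; the only point requiring a little care is the bookkeeping in the coequalizer presentation of the coend, and if one prefers to sidestep even that, the statement is exactly the co-Yoneda lemma and may simply be quoted (see \cite{maclane1998}, and \cite{fisher1968} for the additive setting used here).
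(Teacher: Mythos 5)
Your proof is correct and follows essentially the same route as the paper, which simply cites Fisher's Proposition 1.1 or asks the reader to ``use universal properties of coends to verify that the morphisms $A(b,a)\otimes_{\mathbb{Z}}Fb\to Fa:f\otimes x\mapsto (Ff)x$ assemble to form an isomorphism''; you have carried out that verification in full via the coequalizer presentation, with the same evaluation map and the inverse $x\mapsto\pi(1_a\otimes x)$. Nothing to add.
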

\begin{proof}
See either \cite[Proposition 1.1]{fisher1968} or use universal properties of coends to verify that the morphisms $A(b,a)\otimes_\mathbb{Z} Fb\to Fa:f\otimes x\mapsto (Ff)x$ assemble to form an isomorphism $A(-,a)\otimes_A F\to Fa$.
\end{proof}
\begin{lem}\thlabel{homtenspreadd}Let $A$ be a $K$-linear category and let $M\in A\Lmods$ and $N\in\Rmods A$ be given. There are isomorphisms
\begin{displaymath}
(M,N^*)\cong (N\otimes_A M)^*\cong (N,M^*)
\end{displaymath}
which are natural in $M$ and $N$.
\end{lem}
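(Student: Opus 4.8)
The plan is to extract both isomorphisms directly from the coend formula for the tensor product. Recall that $N\otimes_A M=\int^{a\in A}Na\otimes_K Ma$, which by definition is the coequaliser of the two evident maps $\coprod_{f:a\to b}Nb\otimes_K Ma\rightrightarrows\coprod_{a}Na\otimes_K Ma$. First I would dualise this presentation: the contravariant functor $\Hom_K(-,J)$ sends coproducts to products, and it sends this coequaliser to the corresponding equaliser simply by the universal property of a coequaliser (so no appeal to injectivity of $J$ is needed here). Since an equaliser of products of exactly this shape is an end, this yields an isomorphism
\[
(N\otimes_A M)^*\;\cong\;\int_{a\in A}(Na\otimes_K Ma)^*,
\]
the end of the bifunctor $(a,b)\mapsto(Na\otimes_K Mb)^*$, and the isomorphism is natural in $M$ and $N$ because every map used in it is.

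Next I would apply the hom-tensor adjunction for $K$-modules pointwise, in the two available ways. Writing $(Na\otimes_K Ma)^*\cong(Ma,(Na)^*)=(Ma,N^*a)$, naturally in $a$, turns the end into $\int_{a\in A}(Ma,N^*a)$, which by the standard end formula for the hom-set of a functor category is precisely the group of natural transformations $M\to N^*$, i.e.\ $(M,N^*)$ computed in $A\Lmods$. (Here one notes that since $M$ and $N^*$ are $K$-linear functors, every natural transformation between them is automatically $K$-linear, so there is no discrepancy between the $K$-linear and the abelian-group hom-sets.) Writing instead $(Na\otimes_K Ma)^*\cong(Na,(Ma)^*)=(Na,M^*a)$ and applying the same end formula for $\Rmods A$ identifies the end with $(N,M^*)$. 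Chaining the three isomorphisms proves the lemma.

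If one prefers to avoid (co)ends, essentially the same argument can be run at representables. Writing $N\cong\varinjlim A(-,a_i)$ as a colimit of representable right $A$-modules gives $N\otimes_A M\cong\varinjlim Ma_i$ by \thref{tensyoneda}, hence $(N\otimes_A M)^*\cong\varprojlim(Ma_i)^*$; meanwhile $N^*\cong\varprojlim A(-,a_i)^*$ pointwise, so $(M,N^*)\cong\varprojlim(M,A(-,a_i)^*)$, and one is left with the ``dual Yoneda'' identity $(M,A(-,a)^*)\cong(Ma)^*$, which is immediate from ordinary Yoneda in $\Rmods A$ together with the module-level adjunction of \thref{homtens}; the isomorphism $(N,M^*)\cong(N\otimes_A M)^*$ is obtained symmetrically.

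I do not expect a genuinely hard step: this lemma is the functor-category incarnation of \thref{homtens}, proved by the same adjunction applied inside a coend. The only point requiring care is the bookkeeping of variances — the dualised bifunctor $(a,b)\mapsto(Na\otimes_K Mb)^*$ has both variances reversed relative to $(a,b)\mapsto Na\otimes_K Mb$, so the displayed end is, strictly speaking, indexed by $A^\op$, which of course carries the same wedges and hence the same end — together with the observation that the end description of natural transformations remains valid in this $K$-linear setting, which it does for the reason noted above.
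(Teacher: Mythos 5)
Your argument is correct and is essentially the proof the paper has in mind: the paper simply cites Fisher and remarks that one can "combine the calculus of ends and coends with \thref{homtens}", which is exactly what you do (dualise the coend presentation to an end, apply the hom-tensor adjunction pointwise, and read off the end as a natural-transformation group). Your attention to the variance bookkeeping and to the $K$-linearity of the components is appropriate but raises no new issues.
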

\begin{proof}
See \cite[Corollary 2.4]{fisher1968}. One can also construct a proof by combining some well-known facts about the calculus of ends and coends \cite[IX]{maclane1998} with \thref{homtens}.
\end{proof}
\subsection{The annihilator functor of a pp pair}
Let $R$ be an artin algebra.

For a pp formula $\theta(x_1,\dots, x_n)$ in the language of left $R$-modules, we will write $\mathbb{F}_\theta:R\Lmods\to\Ab$ for the corresponding functor, and $F_\theta=\mathbb{F}_\theta|_{R\lmods}$. We also define the functor $\mathbb{A}_\theta:(R\Lmods)^\op\to\Ab$ at a left $R$-module $M$ by \begin{displaymath}
\mathbb{A}_\theta(M)=\left\{\overline f\in (M^*)^n:\overline f\cdot \overline a=0\text{ for all }\overline a\in\theta(M)\right\},
\end{displaymath}
where for $\overline f=(f_1,\dots,f_n)\in (M^*)^n$ and $\overline a=(a_1,\dots,a_n)\in M^n$ we write $\overline f\cdot\overline a$ to denote the sum $\sum^n_{i=1}f_i(a_i)$. Furthermore, we define $A_\theta$ to be the restriction of $\mathbb{A}_\theta$ to $(R\lmods)^\op$.

For a pp pair $\varphi/\psi$ in the language of left $R$-modules, we will write $\mathbb{F}_{\varphi/\psi}=\mathbb{F}_\varphi/\mathbb{F}_\psi:R\Lmods\to\Ab$ for the corresponding functor, and $F_{\varphi/\psi}=\mathbb{F}_{\varphi/\psi}|_{R\lmods}$. Note that $\mathbb{A}_\varphi$ is a subfunctor of $\mathbb{A}_\psi$, and so we may define $\mathbb{A}_{\varphi/\psi}=\mathbb{A}_\psi/\mathbb{A}_\varphi$. We also define $A_{\varphi/\psi}$ to be the restriction of $\mathbb{A}_{\varphi/\psi}$ to $(R\lmods)^\op$. See Section \ref{annpp} for the relationship between these annihilator functors and Prest's notion of duality for pp formulas. We call $\mathbb{A}_{\varphi/\psi}$ the \textbf{annihilator functor} of $\varphi/\psi$.

It is known that a full subcategory $\mathcal{D}$ of $R\Lmods$ is definable iff there is some set $T$ of pp pairs in the language of left $R$-modules such that 
\begin{displaymath}
M\in\mathcal{D}\Leftrightarrow \forall \varphi/\psi\in T, \mathbb{A}_{\varphi/\psi}M=0
\end{displaymath}
for all $M\in R\Lmods$. This is due to the fact that, for any pp pair $\varphi/\psi$ and any left $R$-module $M$, $\mathbb{A}_{\varphi/\psi}M=0$ iff $\mathbb{F}_{\varphi/\psi}M=0$ (see \cite[1.3.15]{prest2009}). This motivates the desire to find algebraic characterisations of annihilator functors.
\section{Contravariant functors}
\subsection{The main theorems}
Now we have enough definitions to state the two main theorems. In this section $R$ will denote an artin $K$-algebra.
\begin{thm}\thlabel{contravarconditions}Let $G:(R\Lmods)^\op\to\Ab$ be an additive contravariant functor. The following are equivalent:
\begin{list}{*}{â€¢}
\item[(a)]$G\cong F^*$ for some functor $F:R\Lmods\to\Ab$ which preserves direct limits and is finitely presented when restricted to $R\lmods$.
\item[(b)]$G$ preserves inverse limits and is finitely presented when restricted to $(R\lmods)^\op$.
\item[(c)]There are finitely presented left $R$-modules $A$ and $B$ and a sequence of natural transformations
\begin{displaymath}
\xymatrix{(-,A)\ar[r]&(-,B)\ar[r]&G\ar[r]&0}
\end{displaymath}
which is exact when evaluated at any left $R$-module.
\item[(d)]There is a sequence of natural transformations
\begin{displaymath}
\xymatrix{G'\ar[r]&G''\ar[r]&G\ar[r]&0}
\end{displaymath}
which is exact when evaluated at any left $R$-module, where the functors $G',G'':(R\Lmods)^\op\to\Ab$ preserve inverse limits and are finitely presented when restricted to $(R\lmods)^\op$.
\item[(e)]$G\cong F_*$ for some functor $F:\Rmods R\to\Ab$ which preserves direct limits and is finitely presented when restricted to $\rmods R$.
\item[(f)]$G\cong\mathbb{A}_{\varphi/\psi}$ for some pp pair $\varphi/\psi$ in the language of left $R$-modules.
\end{list}
\end{thm}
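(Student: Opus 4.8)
The plan is to treat \thref{covarconditions} on the covariant side as a black box and transport everything across the $K$-dual, arranging the six conditions into a cycle. The first ingredient is an elementary identification: for any pp pair $\varphi/\psi$ in the language of left $R$-modules there is an isomorphism $\mathbb{A}_{\varphi/\psi}\cong(\mathbb{F}_{\varphi/\psi})^*$, natural in the module argument. Indeed, $\mathbb{A}_\theta(M)$ is exactly the annihilator of the subgroup $\theta(M)\subseteq M^n$ inside $(M^n)^*$, and, $J$ being injective, $(-)^*$ is exact, so annihilators of subgroups correspond to duals of the corresponding quotients; applying $(-)^*$ to the short exact sequence $0\to\varphi(M)/\psi(M)\to M^n/\psi(M)\to M^n/\varphi(M)\to 0$ identifies $\mathbb{A}_\psi(M)/\mathbb{A}_\varphi(M)$ with $(\varphi(M)/\psi(M))^*$ naturally in $M$. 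Together with the equivalence of (a) and (c) in \thref{covarconditions}, this gives (a)$\Leftrightarrow$(f) at once: $G$ is isomorphic to some $\mathbb{A}_{\varphi/\psi}\cong(\mathbb{F}_{\varphi/\psi})^*$ precisely when $G\cong F^*$ for some $F$ satisfying condition (a) of \thref{covarconditions}.

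Next I would establish (a)$\Leftrightarrow$(b). If $G\cong F^*$ with $F$ preserving direct limits, then, because $(-)^*=\Hom_K(-,J)$ turns colimits into limits, $F^*(\varinjlim M_i)\cong(\varinjlim FM_i)^*\cong\varprojlim (FM_i)^*\cong\varprojlim F^*M_i$, so $G$ preserves inverse limits; and $G|_{(R\lmods)^\op}\cong(F|_{R\lmods})^*$ is finitely presented because the $K$-dual carries finitely presented functors on $R\lmods$ to finitely presented functors on $(R\lmods)^\op\simeq\rmods R$ — take a presentation of $F|_{R\lmods}$ by representables, dualise to a copresentation whose terms, by \thref{homtens}, are tensor functors and in particular finitely presented, and use that finitely presented functors over an artin algebra are closed under kernels. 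Conversely, if $G$ satisfies (b), then $(G|_{(R\lmods)^\op})^*$ is a finitely presented functor $R\lmods\to\Ab$, and its left Kan extension $F$ to $R\Lmods$ satisfies condition (a) of \thref{covarconditions}; one checks $F^*\cong G$ by comparing them on finitely presented modules — where all the values in sight are finite length $K$-modules, hence reflexive — and extending along direct limits. The equivalence (e)$\Leftrightarrow$(f) comes out the same way on the side of right modules, together with the comparison (deferred to Section \ref{annpp}) between annihilator functors and Prest's duality of pp formulas: for a pp pair $\varphi^r/\psi^r$ of right modules one has $\mathbb{F}_{\varphi^r/\psi^r}(M^*)\cong\mathbb{A}_{D\psi^r/D\varphi^r}(M)$ naturally in the left module $M$, and $D$ is an involution on pp pairs.

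It remains to bring in (c) and (d). For (e)$\Rightarrow$(c): if $G\cong F_*$ with $F:\Rmods R\to\Ab$ satisfying condition (a) of \thref{covarconditions}, then $F$ has a presentation $(B',-)\to(A',-)\to F\to 0$ exact at every right module; evaluating at $M^*$ and using \thref{homtens} in the form $(N,M^*)\cong(M,N^*)$, the terms become the contravariant representables $(-,(A')^*)$ and $(-,(B')^*)$ with $(A')^*,(B')^*\in R\lmods$, yielding a sequence $(-,(B')^*)\to(-,(A')^*)\to G\to 0$ exact at every left $R$-module. For (c)$\Rightarrow$(a): any natural transformation $(-,A)\to(-,B)$ is, via \thref{homtens}, the dual $\phi^*$ of a natural transformation $\phi$ between the tensor functors $B^*\otimes_R-$ and $A^*\otimes_R-$, so, $(-)^*$ being exact, $G\cong\Coker(\phi^*)\cong(\Ker\phi)^*$ pointwise; and $\Ker\phi:R\Lmods\to\Ab$ preserves direct limits (filtered colimits are exact in $\Ab$) and is finitely presented on $R\lmods$, hence satisfies condition (a) of \thref{covarconditions}, so $G$ satisfies (a). Finally, (c)$\Rightarrow$(d) is immediate by taking $G'=(-,A)$ and $G''=(-,B)$, and (d)$\Rightarrow$(c) is a diagram chase in the Grothendieck abelian category $((R\Lmods)^\op,\Ab)$: starting from representable presentations of $G'$ and $G''$ (available since (b) is by now known to be equivalent to (c)) and the epimorphism $G''\twoheadrightarrow G$, one covers the kernel of the induced representable epimorphism onto $G$ by a finite biproduct of representables, using that representables are projective.

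The main obstacle I anticipate is the realisation, via the $K$-dual, of the duality between finitely presented functors on $R\lmods$ and on $(R\lmods)^\op\simeq\rmods R$ that underlies (a)$\Leftrightarrow$(b): this needs the fact that finitely presented functors over an artin algebra form an abelian category (so are closed under kernels), and it needs careful bookkeeping of how $(-)^*$, the isomorphisms of \thref{homtens}, the contravariant representables and the tensor functors interchange — it is this bookkeeping, rather than any single hard estimate, that makes the argument delicate. A further nontrivial ingredient, deferred to Section \ref{annpp}, is the precise comparison of the annihilator functors $\mathbb{A}_{\varphi/\psi}$ with Prest's elementary duality of pp pairs, which powers the passage to conditions (e) and (f).
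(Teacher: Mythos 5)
Your proposal is correct and reaches all six equivalences, but by a genuinely different route from the paper's in three places. First, you prove $\mathbb{A}_{\varphi/\psi}\cong\mathbb{F}_{\varphi/\psi}^*$ directly, by observing that $\mathbb{A}_\theta(M)$ is the annihilator of $\theta(M)$ in $(M^n)^*$, hence (by injectivity of $J$) the dual of $M^n/\theta(M)$, and then dualising the short exact sequence $0\to\varphi(M)/\psi(M)\to M^n/\psi(M)\to M^n/\varphi(M)\to 0$; this gives (a)$\Leftrightarrow$(f) in one stroke and yields \thref{pppairdual} as a byproduct. The paper instead reaches the same isomorphism only at the end, via the duality $d$ of finitely presented functors, the identity $F^*\cong(dF)_*$ (\thref{eqdual}), a snake-lemma diagram (\thref{lemma}) and the unique-extension theorem (\thref{meh}); your argument is more elementary and arguably cleaner for this particular point. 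Second, you close the cycle with a direct (c)$\Rightarrow$(a), writing a presentation $(-,A)\to(-,B)\to G\to 0$ as the dual of $0\to\Ker\phi\to B^*\otimes-\to A^*\otimes-$ and using exactness of $(-)^*$ to get $G\cong(\Ker\phi)^*$; the paper instead goes (c)$\Rightarrow$(d)$\Rightarrow$(b)$\Rightarrow$(a) through the machinery of $\overleftarrow{G}$ and the formula $\overleftarrow{G}M\cong(G_*,-\otimes M)^*$ (\thref{extensionformula}, \thref{rightex}). Third, you handle (d)$\Rightarrow$(c) by a pointwise projectivity/horseshoe chase covering the kernel of $(-,B'')\twoheadrightarrow G$ by $(-,A''\oplus B')$, where the paper does (d)$\Rightarrow$(b) using closure of $\rmods(R\lmods)$ under cokernels together with \thref{rightex}. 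What the paper's longer route buys is reusable infrastructure (\thref{unqext}, \thref{rightex}, \thref{extensionformula} are all invoked again for \thref{arform} and in the dlfp section); your route is shorter and more self-contained for the theorem itself. The (a)$\Leftrightarrow$(b) core is essentially identical in both treatments (dualise a presentation, use coherence of the artin algebra for closure under kernels, extend along direct limits). The one step you should make explicit rather than gesture at is the uniqueness of inverse-limit-preserving extensions used in your (b)$\Rightarrow$(a): knowing that $F^*$ and $G$ agree on $(R\lmods)^\op$ does not by itself produce an isomorphism on all of $R\Lmods$ --- you must first construct a globally defined natural transformation (as the paper does with $\theta_M:GM\to((-,M),G|_{(R\lmods)^\op})$ in \thref{extofrest}) and only then argue it is an isomorphism by writing each module as a direct limit of finitely presented ones; as stated, ``extending along direct limits'' hides this, though it is a standard repair.
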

\begin{thm}\thlabel{pppairdual}For any pp pair $\varphi/\psi$ in the language of left $R$-modules, $\mathbb{F}_{\varphi/\psi}^*\cong\mathbb{A}_{\varphi/\psi}$.
\end{thm}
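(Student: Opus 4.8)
The plan is to produce, for each left $R$-module $M$, a natural isomorphism between $\mathbb{F}_{\varphi/\psi}^*(M)=(\varphi(M)/\psi(M))^*$ and $\mathbb{A}_{\varphi/\psi}(M)=\mathbb{A}_\psi(M)/\mathbb{A}_\varphi(M)$. Write $n$ for the number of free variables common to $\varphi$ and $\psi$. The key point is the canonical isomorphism $(M^n)^*\cong(M^*)^n$, which sends a $K$-linear map $g\colon M^n\to J$ to the tuple $(g\iota_1,\dots,g\iota_n)$ of its composites with the coordinate inclusions $\iota_i\colon M\to M^n$; under this identification $g(\overline a)=\sum_{i=1}^n(g\iota_i)(a_i)=\overline g\cdot\overline a$, so that for any pp formula $\theta$ in $n$ free variables the subgroup $\mathbb{A}_\theta(M)\subseteq(M^*)^n$ corresponds precisely to $\{g\in(M^n)^*:g|_{\theta(M)}=0\}$.

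First I would observe that $\theta(M)$ is a $K$-submodule of $M^n$: multiplication by a central $k\in K$ acts on $M$ by an element of $\End_R(M)$, and $\theta(M)$ is stable under the diagonal action of $\End_R(M)$ because the matrices defining $\theta$ have entries in $R$ and $R$-endomorphisms commute with left multiplication by those entries. Hence $\varphi(M)/\psi(M)$ carries a $K$-module structure (which is the one hidden in $\mathbb{F}_{\varphi/\psi}$) and $\mathbb{F}_{\varphi/\psi}^*(M)$ is defined. Since $J$ is an injective $K$-module, applying $(-)^*$ to $0\to\theta(M)\to M^n\to M^n/\theta(M)\to 0$ identifies $(M^n/\theta(M))^*$ with $\{g\in(M^n)^*:g|_{\theta(M)}=0\}$, and hence with $\mathbb{A}_\theta(M)$, naturally in $M$.

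Now take $\theta=\varphi$ and $\theta=\psi$. As $\psi(M)\subseteq\varphi(M)$, the canonical surjection $p\colon M^n/\psi(M)\to M^n/\varphi(M)$ has kernel $\varphi(M)/\psi(M)$, giving a short exact sequence
\begin{displaymath}
\xymatrix{0\ar[r]&\varphi(M)/\psi(M)\ar[r]&M^n/\psi(M)\ar[r]&M^n/\varphi(M)\ar[r]&0.}
\end{displaymath}
Dualising and using the identifications above yields an exact sequence
\begin{displaymath}
\xymatrix{0\ar[r]&\mathbb{A}_\varphi(M)\ar[r]&\mathbb{A}_\psi(M)\ar[r]&(\varphi(M)/\psi(M))^*\ar[r]&0}
\end{displaymath}
in which the first map is $p^*$, which is simply the inclusion $\mathbb{A}_\varphi(M)\hookrightarrow\mathbb{A}_\psi(M)$ of functionals on $M^n$ (a functional vanishing on $\varphi(M)$ a fortiori vanishes on $\psi(M)$). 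Passing to cokernels gives $\mathbb{A}_{\varphi/\psi}(M)=\mathbb{A}_\psi(M)/\mathbb{A}_\varphi(M)\cong(\varphi(M)/\psi(M))^*=\mathbb{F}_{\varphi/\psi}^*(M)$.

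Finally I would check naturality. For a homomorphism $h\colon M\to N$ one has $h(\theta(M))\subseteq\theta(N)$ for every pp formula $\theta$, so $h$ induces a morphism between the short exact sequences above for $M$ and for $N$; dualising, and using the naturality of $(M^n)^*\cong(M^*)^n$ and of the dual of a short exact sequence, shows that $\mathbb{F}_{\varphi/\psi}^*(h)$ is carried to $\mathbb{A}_{\varphi/\psi}(h)$ by our isomorphisms. I do not expect a genuine obstacle: the theorem says essentially that the annihilator functor of a pp pair is the pointwise $K$-dual of the functor of that pp pair, and the proof is a diagram chase resting on the injectivity of $J$; the only point demanding a little care is matching the abstract map $(\mathbb{F}_{\varphi/\psi}h)^*$ with the explicitly defined $\mathbb{A}_{\varphi/\psi}(h)$ through the chain of canonical identifications.
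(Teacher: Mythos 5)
Your proof is correct, but it takes a genuinely different route from the paper's. The paper first establishes the isomorphism on finitely presented modules (\thref{lemma}) by passing through the duality $d$ of finitely presented functors, Prest's duality $D$ of pp formulas, the identity $\mathbb{A}_\varphi(M)=\mathbb{F}_{D\varphi}(M^*)$ and the short exact sequences of \cite[10.3.6]{prest2009}, finishing with the snake lemma; it then extends to arbitrary modules (\thref{meh}) by observing that both $\mathbb{F}_{\varphi/\psi}^*$ and $\mathbb{A}_{\varphi/\psi}$ preserve inverse limits and invoking the uniqueness of inverse-limit-preserving extensions (\thref{unqext}). You instead work pointwise at an arbitrary $M$ from the outset: you identify $(M^*)^n$ with $(M^n)^*$, recognise $\mathbb{A}_\theta(M)$ as the annihilator of $\theta(M)$ in $(M^n)^*$, and then dualise the tautological short exact sequence $0\to\varphi(M)/\psi(M)\to M^n/\psi(M)\to M^n/\varphi(M)\to 0$, using only the exactness of $(-)^*$ coming from the injectivity of $J$. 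Your argument is more elementary and self-contained --- it needs neither \thref{contravarconditions} nor the extension machinery of Section \ref{contraext}, and it handles all modules uniformly rather than reducing to the finitely presented case --- and your preliminary observations (that $\theta(M)$ is a $K$-submodule of $M^n$ and that this $K$-structure is the hidden one on $\mathbb{F}_{\varphi/\psi}M$) correctly dispose of the one genuinely delicate point. What the paper's route buys in exchange is the connection to the model-theoretic duality $\varphi\mapsto D\varphi$ and the formula $\mathbb{A}_{\varphi/\psi}=(\mathbb{F}_{D\psi/D\varphi})_*$, which is needed anyway for the equivalence of (e) and (f) in \thref{contravarconditions}; your proof does not yield that description. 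As a small bonus, your isomorphism combined with \thref{reflection} gives an independent proof that $\mathbb{A}_{\varphi/\psi}$ preserves inverse limits.
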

The proofs of these theorems are long because they have many different parts. To accommodate them, the proofs are contained Section \ref{proof} along with some necessary background. To help the reader navigate Section \ref{proof}, the locations of key points of the proofs are given by the table below.
\begin{center}
\begin{tabular}{c|c}
Key point & Location \\
\hline 
(a) implies (b)&\thref{aimpb2}\\
(b) implies (a)&\thref{bimpa}\\
(c) implies (d)&\thref{bimpa}\\
(d) implies (b)&\thref{dimpb}\\
(c) implies (e)&\thref{cimpe}\\
(e) implies (c)&\thref{cimpe}\\
(e) is equivalent to (f)&\thref{eeqf}\\
\thref{pppairdual}&\thref{meh}
\end{tabular}
\end{center}
\subsection{Examples of contravariant functors with definable kernels}\label{appa}
Let $\mathcal{D}$ be a definable subcategory of $R\Lmods$. Then
\begin{displaymath}
\mathcal{D}=\{M\in R\Lmods:FM=0\text{ for all }F\in S\}
\end{displaymath}
where $S$ is a set of functors $R\Lmods\to\Ab$ each satisfying one of the equivalent conditions of \thref{covarconditions}. Then, for each $F\in S$ and each left $R$-module $M$, $\overrightarrow{F}^*(M)=0$ iff $(\overrightarrow FM)^*=0$ iff $\overrightarrow{F}M=0$ iff $M\in\mathcal{D}$. Therefore, we can express $\mathcal{D}$ as the kernel of a set of contravariant functors $(R\Lmods)^\op\to\Ab$, each of which satisfies one of the equivalent conditions listed in \thref{contravarconditions}. (Of course, the kernel of a family of functors $(R\Lmods)^\op\to\Ab$ is, strictly speaking, a subcategory of $(R\Lmods)^\op$, but here we refer to the corresponding subcategory of $R\Lmods$.) Conversely, let $G:(R\Lmods)^\op\to\Ab$ be a functor satisfying any of the equivalent conditions in \thref{contravarconditions}. Then $G\cong F^*$ for some functor $F$ which satisfies any of the equivalent conditions in \thref{covarconditions}. For any left $R$-module $M$, $GM=0$ iff $FM=0$. Therefore the kernel of $G$ is definable. The family of definable subcategories of $R\Lmods$ is closed under intersection, so the kernel of any family of such functors $(R\Lmods)^\op\to\Ab$ is definable.
\begin{ex}The most basic example is that of a hom-functor $(-,A):(R\Lmods)^\op\to\Ab$ for $A\in R\lmods$. Of course it satisfies the equivalent conditions in \thref{contravarconditions}, and so has a definable kernel, but this is also seen by using the hom-tensor duality (see \thref{homtens}) to obtain the natural isomorphisms\begin{displaymath}
(-,A)\cong(-,A^{**})\cong (A^*\otimes-)^*,
\end{displaymath}
so $(-,A)$ has a definable kernel because $A^*\otimes -$ does (it satisfies one of the equivalent conditions in \thref{covarconditions} by \cite[6.1]{auslander1965}).
\end{ex}
\begin{ex}In this example it is shown that for any finitely presented left $R$-module $A$, $\Ext^n(-,A):(R\Lmods)^\op\to\Ab$ satisfies the equivalent conditions in \thref{contravarconditions} for all $n>0$.

There is an exact sequence
\begin{displaymath}
\xymatrix{0\ar[r]&L\ar[r]&P\ar[r]&A^*\ar[r]&0}
\end{displaymath}
where $P$ is a finitely generated projective right $R$-module and $L\in \rmods R$. Therefore there is an exact sequence
\begin{displaymath}
\xymatrix{0\ar[r]&A\ar[r]&P^*\ar[r]&L^*\ar[r]&0.}
\end{displaymath}
Note that $P^*$ is injective in $R\Lmods$ since $(-,P^*)\cong (P,(-)^*):(R\Lmods)^\op\to\Ab$ is exact because $P$ is projective in $\Rmods R$. This induces a sequence of natural transformations
\begin{displaymath}
\xymatrix{0\ar[r]&(-,A)\ar[r]&(-,P^*)\ar[r]&(-,L^*)\ar[dl]&&&&\\
&&\Ext^1(-,A)\ar[r]&\Ext^1(-,P^*)(=0)\ar[r]&\Ext^1(-,L^*)\ar[dl]\\
&&&\Ext^2(-,A)\ar[r]&\Ext^2(-,P^*)(=0)\ar[r]&\dots.}
\end{displaymath}
which is exact when evaluated at any left $R$-module. 

Therefore, $\Ext^1(-,A):(R\Lmods)^\op\to\Ab$ satisfies condition (c) in \thref{contravarconditions}. Also, for any $n>0$, $\Ext^{n+1}(-,A)\cong \Ext^n(-,L^*)$.

Since $L^*$ is also a finitely presented left $R$-module, it follows by induction that for any $n>0$, $\Ext^n(-,A):(R\Lmods)^\op\to\Ab$ satisfies condition (c) in \thref{contravarconditions}.
\end{ex}
\begin{ex}For any morphism $(f:A\to B)\in R\lmods$, let $G_f:(R\Lmods)^\op\to\Ab$ be the functor defined by the exactness of the sequence
\begin{displaymath}
\xymatrix{(-,A)\ar[r]^{(-,f)}&(-,B)\ar[r]&G_f\ar[r]&0}
\end{displaymath}
when evaluated at any left $R$-module. For any left $R$-module $M$, $G_fM=0$ iff $M$ is projective over $f$.

Since $G_f$ satisfies one of the equivalent conditions listed in \thref{contravarconditions}, its kernel, gives a definable subcategory of $R\Lmods$. 

Furthermore, the subcategory $\mathcal{D}$ of $R\Lmods$ which consists of the left $R$-modules which are projective over every epimorphism in $R\lmods$ is a definable subcategory of $R\Lmods$, since $\mathcal{D}=\{M\in R\Lmods:G_fM=0\text{ for all epimorphisms }f\in R\lmods\}$. However, this is not a new definable subcategory: It is the category of flat left $R$-modules, which is known to be definable because $R$ is right coherent (see \cite[3.4.24]{prest2009}). To see that $\mathcal{D}$ is the category of flat modules, note that it contains all flat modules since it contains all projective modules and is closed under direct limits. By \thref{homtens}, for any left $R$-module $M\in\mathcal{D}$, $(M,-):R\lmods\to\Ab$ is exact, which implies that $(-\otimes M)^*\cong (M,(-)^*):(R\lmods)^\op\to \Ab$ is exact and therefore $-\otimes M:R\lmods\to\Ab$ is exact. Therefore $\text{Tor}_1(-,M)|_{\rmods R}=0$, which is to say that $M$ is flat.
\end{ex}
\subsection{Example: The contravariant projectively stable hom-functor}
The Auslander-Reiten duality is an important tool in the representation theory of artin algebras, and is used to prove the existence of almost split sequences (see, for example, \cite[IV.3]{ass}). Sometimes it is only given for finitely presented modules (i.e. the way it is presented at \cite[IV.2.13]{ass}), but one can show that it holds when one variable is not necessarily finitely presented (i.e. as given by Krause in \cite{krause2002}). Let $R$ be an artin algebra throughout this section. As a longer example, we will use \thref{contravarconditions} to show how one can extend the Auslander-Reiten duality for finitely presented $R$-modules so that it holds with an arbitrarily large module in one variable.

For a left $R$-module $M$, let $M^t=\text{Hom}_R(M,R)$, which is a right $R$-module with $(fr)(x)=f(x)r$ for all $f\in M^t$, $x\in M$ and $r\in R$. For each $N\in R\Lmods$, we have a functor $(-)^t\otimes N:(R\Lmods)^\op\to\Ab$. 
\begin{prop}\thlabel{tix}If $N$ is a finitely presented left $R$-module, then the functor $(-)^t\otimes N:(R\Lmods)^\op\to\Ab$ preserves inverse limits and is finitely presented when restricted to $(R\lmods)^\op$.
\end{prop}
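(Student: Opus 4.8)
The plan is to verify condition (c) of \thref{contravarconditions} for the functor $G=(-)^t\otimes N$, i.e.\ to produce finitely presented left $R$-modules $A$ and $B$ and natural transformations $(-,A)\to(-,B)\to G\to 0$ which are exact on every left $R$-module. The natural starting point is a finite presentation of $N$: since $N\in R\lmods$, choose an exact sequence $Q_1\to Q_0\to N\to 0$ with $Q_0,Q_1$ finitely generated projective left $R$-modules. Because $(-)^t\otimes(-)$ is, in each variable, right exact (tensor product of functors is a coend, hence a colimit, so it preserves cokernels), applying $(-)^t\otimes(-)$ to this presentation gives an exact sequence $(-)^t\otimes Q_1\to (-)^t\otimes Q_0\to (-)^t\otimes N\to 0$ of functors $(R\Lmods)^\op\to\Ab$, exact when evaluated at any left $R$-module $M$. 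So it suffices to identify $(-)^t\otimes Q$ for $Q$ finitely generated projective with a representable functor $(-,B)$ for some $B\in R\lmods$, naturally in the first variable.

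The key computation is therefore the case $Q=R$ (the general finitely generated projective case follows by additivity, taking finite direct sums). For $Q=R$ we have, for any left $R$-module $M$, a natural isomorphism $M^t\otimes_R R\cong M^t=(M,R)$ as abelian groups — but we want the variance in $M$ to match that of a \emph{contravariant} hom-functor $(-,B)$, and $(-,R):(R\Lmods)^\op\to\Ab$ as written is not obviously of the form $(-,B)$ with $B$ finitely presented unless we reinterpret it. The trick is to use $K$-duality: by \thref{homtens} (with $N$ there replaced by $R$ viewed as a right $R$-module, or rather using the adjunction directly), one has natural isomorphisms $M^t=(M,R)\cong(M,R^{**})\cong(R^*\otimes M)^*$, and more usefully $M^t\cong (M,{}_RR)$; the point is to rewrite $(-)^t\otimes Q$ so that the representing object is a genuine finitely presented left module. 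Concretely, for $Q$ finitely generated projective, $Q^t$ is a finitely generated projective right $R$-module, and one checks that $(-)^t\otimes Q$ is naturally isomorphic to $\Hom_R(-,Q)$, i.e.\ to $(-,Q)$ with $Q\in R\lmods$. Thus with $B=Q_0$ and $A=Q_1$ we obtain exactly the sequence $(-,A)\to(-,B)\to G\to 0$ required by (c), and \thref{contravarconditions} then yields the conclusion.

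The main obstacle I anticipate is pinning down the natural isomorphism $(-)^t\otimes Q\cong(-,Q)$ with the correct functoriality: the abelian-group-level identification $M^t\otimes_R Q\cong\Hom_R(M,Q)$ for $Q$ projective is standard (it is the natural map $f\otimes q\mapsto(x\mapsto f(x)q)$, an isomorphism because it is so for $Q=R$ and both sides are additive and right exact in $Q$), but one must check it is natural in $M$ \emph{contravariantly} and that it is $K$-linear, so that it is an isomorphism of functors $(R\Lmods)^\op\to\Ab$. Once this is in place, the rest is formal: right-exactness of the coend gives the presentation, finite generation of $Q_0,Q_1$ (hence of $N$'s syzygies, using that $R$ is an artin algebra so finitely generated projectives are finitely presented) gives that $A,B\in R\lmods$, and we invoke \thref{contravarconditions}(c)$\Rightarrow$(b). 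A minor point to be careful about is that the presentation $Q_1\to Q_0\to N\to 0$ need only have $Q_0$ projective and $Q_1$ finitely generated (not necessarily projective) for (c) as literally stated — but since $R$ is an artin algebra we may take $Q_1$ projective as well, which makes the identification of $(-)^t\otimes Q_1$ with a representable functor clean.
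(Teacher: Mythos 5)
Your proposal is correct and follows essentially the same route as the paper: present $N$ by finitely generated projectives, use right-exactness of $(-)^t\otimes(-)$ in the second variable, identify $(-)^t\otimes Q$ with the representable $(-,Q)$ (the paper does this just for $Q=R$, taking free modules $R^m\to R^n\to N\to 0$), and invoke \thref{contravarconditions}. The only superfluous part is your worry about reinterpreting $(-,R)$ via $K$-duality — ${}_RR$ is already a finitely presented left module, so $(-,R)$ is literally of the form $(-,B)$ required by condition (c).
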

\begin{proof}
Find positive integers $m$ and $n$ and a right exact sequence $R^m\to R^n\to N\to 0$. Then we have a sequence of natural transformations $(-)^t\otimes R^m\to (-)^t\otimes R^n\to(-)^t\otimes  N\to 0$ which is exact when evaluated at any left $R$-module. Therefore, by \thref{contravarconditions}, we need only prove that $(-)^t\otimes R$ satisfies these criteria, and indeed it does since $(-)^t\otimes R\cong (-,R)$.
\end{proof}
\begin{defn}For left $R$-modules $M$ and $N$, write $\underline{\text{Hom}}_R(M,N)=(M,N)/P(M,N)$, where $P(M,N)$ denotes the group of maps $M\to N$ which factor through a projective left $R$-module. 
\end{defn}
\begin{thm}\thlabel{stabhom}For any finitely presented left $R$-module $N$, the functor $\underline{\text{Hom}}_R(-,N):(R\Lmods)^\op\to\Ab$ preserves inverse limits and is finitely presented when restricted to $(R\lmods)^\op$.
\end{thm}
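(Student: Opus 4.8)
The plan is to show that $\underline{\Hom}_R(-,N)$ satisfies condition (c) of \thref{contravarconditions}; that theorem then immediately yields condition (b), which is exactly the assertion to be proved. So everything reduces to exhibiting a suitable projective presentation of the functor.

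First I would use that $R$ is an artin algebra and $N$ is finitely presented, hence finitely generated, to fix an epimorphism $p\colon P\to N$ with $P$ a finitely generated free (in particular projective, and finitely presented) left $R$-module. The key elementary claim is that, for every left $R$-module $M$, the subgroup $P(M,N)\subseteq(M,N)$ of maps factoring through a projective coincides with $\Img\big((M,p)\colon(M,P)\to(M,N)\big)$. The inclusion $\Img(M,p)\subseteq P(M,N)$ is immediate since $P$ is projective. For the reverse inclusion, suppose $f\colon M\to N$ factors as $M\xrightarrow{h}Q\xrightarrow{k}N$ with $Q$ projective; because $Q$ is projective and $p$ is an epimorphism, $k$ lifts along $p$ to a map $l\colon Q\to P$ with $pl=k$, and then $f=p(lh)\in\Img(M,p)$.

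Granting this, for every left $R$-module $M$ the sequence
\[
\xymatrix{(M,P)\ar[r]^{(M,p)}&(M,N)\ar[r]&\underline{\Hom}_R(M,N)\ar[r]&0}
\]
is exact: the right-hand map is the defining quotient, whose kernel $P(M,N)$ is the image of the left-hand map by the claim. Thus we obtain a sequence of natural transformations $(-,P)\to(-,N)\to\underline{\Hom}_R(-,N)\to 0$ of functors $(R\Lmods)^\op\to\Ab$ which is exact when evaluated at any left $R$-module, with $P$ and $N$ finitely presented. This is precisely condition (c) of \thref{contravarconditions}, so by that theorem $\underline{\Hom}_R(-,N)$ preserves inverse limits and is finitely presented when restricted to $(R\lmods)^\op$.

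I do not expect a genuine obstacle here: the only non-formal ingredient is the lifting argument identifying $P(-,N)$ with the image of $(-,p)$, which is routine. All of the substance is carried by \thref{contravarconditions}, which upgrades the easily-constructed projective presentation of $\underline{\Hom}_R(-,N)$ to the desired preservation of inverse limits and finite presentation on $(R\lmods)^\op$.
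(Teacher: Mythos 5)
Your proof is correct, but it takes a different route from the paper's. The paper invokes the standard sequence $(-)^t\otimes N\to(-,N)\to\underline{\Hom}_R(-,N)\to 0$ (exact at every module, as in the proof of the Auslander--Reiten preliminaries in \cite{ass}), having first established in \thref{tix} that $(-)^t\otimes N$ preserves inverse limits and is finitely presented on $(R\lmods)^\op$; it then concludes via condition (d) of \thref{contravarconditions}. You instead choose an epimorphism $p\colon P\to N$ from a finitely generated free module and observe that $P(M,N)=\Img\big((M,p)\big)$ for \emph{every} $M$ (the lifting argument through an arbitrary projective $Q$ is exactly right and needs no finiteness hypothesis on $Q$), which yields a presentation $(-,P)\to(-,N)\to\underline{\Hom}_R(-,N)\to 0$ by representables, i.e.\ condition (c) directly. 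Your version is more self-contained --- it bypasses \thref{tix} and the transpose functor entirely --- while the paper's choice of the $(-)^t\otimes N$ presentation keeps the transpose in view, which is the object that reappears in the subsequent application to the Auslander--Reiten formula. Both arguments are valid and both ultimately rest on \thref{contravarconditions}.
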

\begin{proof}
By the same proof as that of \cite[IV.2.12]{ass}, there is a sequence of natural transformations
\begin{displaymath}
\xymatrix{(-)^t\otimes N\ar[r]&(-,N)\ar[r]&\underline{\text{Hom}}_R(-,N)\ar[r]&0}
\end{displaymath}
which is exact when evaluated at any left $R$-module. By \thref{contravarconditions}(d) and \thref{tix}, this finishes the proof.
\end{proof}
\begin{cor}[See \cite{krause2002} for an alternative proof.]\thlabel{arform}Let $N$ be a finitely presented left $R$-module. For any left $R$-module $M$ there is an isomorphism
\begin{displaymath}
\text{Ext}^1(N,M)^*\cong\underline{Hom}_R(M,\tau N)
\end{displaymath}
which is natural in $M$ and $N$, where $\tau$ is the Auslander-Reiten translation.
\end{cor}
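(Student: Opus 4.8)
The plan is to fix the finitely presented left $R$-module $N$ (so that $\tau N$ is again finitely presented) and to compare the two contravariant functors
\begin{displaymath}
G_1=\Ext^1(N,-)^*\colon(R\Lmods)^\op\to\Ab,\qquad G_2=\underline{\text{Hom}}_R(-,\tau N)\colon(R\Lmods)^\op\to\Ab
\end{displaymath}
by establishing three things: (i) each of $G_1,G_2$ satisfies the equivalent conditions of \thref{contravarconditions}; (ii) their restrictions to $(R\lmods)^\op$ are isomorphic, via the classical Auslander--Reiten formula; and (iii) a functor satisfying the conditions of \thref{contravarconditions} is determined, up to natural isomorphism, by its restriction to $(R\lmods)^\op$. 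Combining these gives $G_1\cong G_2$.

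For (i), I would first check that $\Ext^1(N,-)\colon R\Lmods\to\Ab$ preserves direct limits and is finitely presented when restricted to $R\lmods$. Since $R$ is noetherian, $N$ has a projective resolution by finitely generated projectives, and $\Hom$ out of finitely generated projectives, together with passage to cohomology, both commute with filtered colimits in $\Ab$; moreover, from a syzygy sequence $0\to\Omega N\to P_0\to N\to 0$ with $P_0$ finitely generated projective and $\Omega N$ finitely presented, the long exact sequence yields $(P_0,-)\to(\Omega N,-)\to\Ext^1(N,-)\to 0$, so $\Ext^1(N,-)|_{R\lmods}$ is a cokernel of a map of representables. Thus $\Ext^1(N,-)$ satisfies the equivalent conditions of \thref{covarconditions}, and the implication (a)$\Rightarrow$(b) of \thref{contravarconditions} shows $G_1$ preserves inverse limits and is finitely presented on $(R\lmods)^\op$. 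For $G_2$ this is exactly \thref{stabhom}, applied to the finitely presented module $\tau N$.

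For (ii), the Auslander--Reiten formula for finitely presented modules (\cite[IV.2.13]{ass}; cf.\ \cite{krause2002}) gives a natural isomorphism $\Ext^1_R(N,M)^*\cong\underline{\text{Hom}}_R(M,\tau N)$ for $M\in R\lmods$, i.e.\ $G_1|_{(R\lmods)^\op}\cong G_2|_{(R\lmods)^\op}$. For (iii), I would use that, by \thref{contravarconditions}, each $G_i$ is the cokernel of some natural transformation $(-,A_i)\to(-,B_i)$ with $A_i,B_i\in R\lmods$; applying $\mathrm{Nat}(-,G_j)$ to this presentation and using contravariant Yoneda gives a left exact sequence $0\to\mathrm{Nat}(G_i,G_j)\to G_j(B_i)\to G_j(A_i)$ whose last two terms, being values of $G_j$ at finitely presented modules, are unchanged upon restriction to $R\lmods$. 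Hence restriction along $(R\lmods)^\op\hookrightarrow(R\Lmods)^\op$ is fully faithful on the class of functors covered by \thref{contravarconditions}, so the isomorphism of (ii) lifts to a unique isomorphism $G_1\cong G_2$, natural in $M$. Naturality in $N$ then follows because $\Ext^1(-,M)$ and $\underline{\text{Hom}}_R(M,\tau(-))$ both factor through the projectively stable category $\underline{R\lmods}$, the classical formula is natural there, and by the faithfulness just established the corresponding naturality squares over $R\Lmods$ commute.

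The main obstacle is step (iii): the fact that a contravariant functor on $R\Lmods$ of the kind described in \thref{contravarconditions} is rigidly pinned down by its restriction to finitely presented modules — equivalently, that restriction is an equivalence onto the finitely presented functors on $(R\lmods)^\op$. This is a Yoneda-style bookkeeping argument rather than a deep one, but it carries the actual content of the extension; the remaining ingredients are direct appeals to \thref{stabhom} and \thref{contravarconditions}, the standard behaviour of $\Ext$ over a noetherian ring, and the classical Auslander--Reiten duality.
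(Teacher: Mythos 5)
Your proposal is correct and follows essentially the same route as the paper: establish the classical Auslander--Reiten isomorphism on $(R\lmods)^\op$ and then use the fact that both functors preserve inverse limits (equivalently, satisfy \thref{contravarconditions}) to conclude they are determined by their restrictions to finitely presented modules. The only cosmetic differences are that you re-derive the uniqueness-of-extension step by a Yoneda computation where the paper simply cites \thref{unqext}, and you prove that $\Ext^1(N,-)$ commutes with direct limits via a syzygy sequence where the paper cites Brown's homological finiteness criterion.
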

\begin{proof}This proof relies on a fact from Section \ref{proof}, so the reader may wish to return to it later. By the Auslander-Reiten formula \cite[IV.2.13]{ass}, there is such a natural isomorphism for $M\in R\lmods$. However, both $\text{Ext}^1(N,-)^*$ and $\underline{\text{Hom}}_R(-,\tau N)$ preserve inverse limits (by \cite[Corollary of Theorem 1]{brown1975} and \thref{stabhom}), so by \thref{unqext} they are, up to isomorphism, the same functor $(R\Lmods)^\op\to\Ab$.
\end{proof}
\section{Background and proof of the main theorems}\label{proof}
The main results are proven by a number of steps which require some background knowledge. We will go through the background in this section, and build up to proving the theorems. Throughout, let $R$ denote an artin algebra.
\subsection{Finitely presented objects and locally finitely presented categories}\label{lfp}
This section is just background on finitely presented objects and finitely presented functors.

An object $c\in C$ in an category $C$ is said to be \textbf{finitely presented} if $C(c,-):C\to\Ab$ preserves direct limits. Note that this does agree with the usual notions of finitely presented module and finitely presented functor, as stated in \thref{fpfunc1}, \thref{fpfunc2}, and \thref{fpfunc3}.

\begin{defn}\cite{crawleyboevey1994} We say that an additive category $C$ is \textbf{locally finitely presented} if it has direct limits, every object of $C$ is the direct limit of a direct system of finitely presented objects, and $C^\text{fp}$, the full subcategory of finitely presented objects in $C$, is skeletally small. (Note that we do not insist that $C$ is complete or cocomplete, so this is weaker than the notion of ``locally finitely presented'' in \cite{adamekrosicky}.)
\end{defn}
For any small pre-additive category $A$, we write $A\Lmods$ for the category of all additive functors $A\to\Ab$, and we define $A\lmods=(A\Lmods)^\fp$. We write $\Rmods A=A^\op\Lmods$ and $\rmods A=A^\op\lmods$.
\begin{thm}\cite[Section 1.2]{crawleyboevey1994}\thlabel{fpfunc1} Suppose $A$ is a small additive category. Then $A\Lmods$ is locally finitely presented, and $A\lmods$ consists of those functors $F$ which appear in an exact sequence of the form $A(a,-)\to A(b,-)\to F\to 0$ for some $a,b\in A$.
\end{thm}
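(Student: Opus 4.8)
The plan is to deduce everything from the Yoneda lemma together with the fact that colimits in $A\Lmods$ are computed objectwise. First I would record the basic structure: $A\Lmods$ is an abelian category in which all limits and colimits exist and are computed pointwise in $\Ab$, so in particular direct limits in $A\Lmods$ commute with every evaluation functor $\mathrm{ev}_a\colon A\Lmods\to\Ab$. By the Yoneda lemma there is a natural isomorphism $(A\Lmods)(A(a,-),F)\cong Fa$, whence each representable $A(a,-)$ is a projective object, and it is finitely presented because $\mathrm{ev}_a$ preserves direct limits. Since $A$ is additive, a finite coproduct $A(a_1,-)\oplus\cdots\oplus A(a_k,-)$ is again representable, being $A(a_1\oplus\cdots\oplus a_k,-)$; and for any $F\in A\Lmods$ the canonical map $\bigoplus_{a\in A,\,x\in Fa}A(a,-)\to F$ is an epimorphism. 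Hence $A\Lmods$ has enough projectives, the representables and their coproducts forming a generating family of projectives.

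To see that $A\Lmods$ is locally finitely presented, note that an arbitrary coproduct of representables is the direct limit of its finite subcoproducts, each of which is representable by the previous step; and an arbitrary $F$ is the directed union of its finitely generated subfunctors (these being the images of the maps $A(a,-)\to F$, which form a directed family because $A$ is additive). Splicing a presentation of $F$ by coproducts of representables, obtained by iterating the epimorphism above, with these two observations exhibits every object of $A\Lmods$ as a direct limit of representables, hence of finitely presented objects. Finally $A\lmods=(A\Lmods)^{\fp}$ is skeletally small: by the characterisation proved below its objects are cokernels of morphisms between representables, and since $A$ is small the data $(a,b,f\in A(b,a))$ range over a set.

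It remains to identify $(A\Lmods)^{\fp}$. For the easy direction, suppose $A(a,-)\to A(b,-)\to F\to 0$ is exact. Applying $(A\Lmods)(-,M)$ and using left-exactness together with Yoneda gives a natural isomorphism $(A\Lmods)(F,M)\cong\Ker(Mb\to Ma)$. Since filtered colimits are exact in $\Ab$ and are computed objectwise, for any direct system $(M_i)$ the canonical comparison map is an isomorphism $(A\Lmods)(F,\varinjlim M_i)\cong\Ker(\varinjlim M_ib\to\varinjlim M_ia)\cong\varinjlim\Ker(M_ib\to M_ia)\cong\varinjlim(A\Lmods)(F,M_i)$, so $F$ is finitely presented. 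Conversely, let $F$ be finitely presented. Writing $F$ as the directed union of its finitely generated subfunctors, the identity of $F$ factors through one of them, which is therefore the whole of $F$; so there is an epimorphism $\pi\colon A(b,-)\to F$, and it suffices to prove that $K=\Ker\pi$ is finitely generated, since then an epimorphism $A(a,-)\to K$ composed with $K\hookrightarrow A(b,-)$ yields the required sequence.

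The finite generation of $K$ is the main obstacle, and I would argue as follows. Write $K$ as the directed union of its finitely generated subfunctors $K_\lambda$. Then $F\cong A(b,-)/K=\varinjlim A(b,-)/K_\lambda$ is a filtered colimit, so finite presentation of $F$ lets the identity of $F$ factor as $F\to A(b,-)/K_\mu\to F$ for some $\mu$, giving $A(b,-)/K_\mu\cong F\oplus K/K_\mu$. As $A(b,-)/K_\mu$ is a quotient of a representable it is finitely generated, hence so is its direct summand $K/K_\mu$. But $K/K_\mu$ is the directed union of the cofinal family $\{K_\lambda/K_\mu:\lambda\ge\mu\}$ of its finitely generated subfunctors — any finitely generated subfunctor of $K/K_\mu$ lifts, along the projection from $K$, to a finitely generated subfunctor of $K$, whose sum with $K_\mu$ is some $K_\lambda$ — and a finitely generated object that is such a directed union must coincide with one of its members. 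Hence $K/K_\mu=K_\nu/K_\mu$, that is $K=K_\nu$ is finitely generated, which completes the proof. All the remaining steps are routine unwindings of Yoneda and of the exactness of filtered colimits in $\Ab$.
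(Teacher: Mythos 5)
Your characterisation of the finitely presented functors is correct and complete: the easy direction via Yoneda and exactness of filtered colimits in $\Ab$, the reduction to finite generation of $K=\Ker\,\pi$ via the splitting $A(b,-)/K_\mu\cong F\oplus K/K_\mu$, and the final cofinality argument for $K/K_\mu$ are all sound, and the use of additivity of $A$ to replace finite coproducts of representables by single representables is exactly what makes the two-term presentation $A(a,-)\to A(b,-)\to F\to 0$ possible. (The paper itself only cites Crawley-Boevey for this theorem, so there is no in-paper proof to compare against.)

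There is, however, a genuine gap in your proof that $A\Lmods$ is locally finitely presented. You assert that splicing a presentation by coproducts of representables with your two observations ``exhibits every object of $A\Lmods$ as a direct limit of representables, hence of finitely presented objects.'' The intermediate claim is false in general: a filtered colimit of representables is a flat functor (the functor-category form of Govorov--Lazard), so a non-flat $F$ is never a direct limit of representables; and your two observations do not combine to give what is needed, because the finitely generated subfunctors of $F$ need not be finitely presented, so writing $F$ as their directed union does not exhibit it as a direct limit of finitely presented objects. This step needs a genuine argument, for instance: fix a presentation $\bigoplus_{j\in J}A(b_j,-)\to\bigoplus_{i\in I}A(a_i,-)\to F\to 0$; by Yoneda each $A(b_j,-)$ lands in a finite subcoproduct, so the pairs $(I_0,J_0)$ of finite subsets for which the map restricts to $\bigoplus_{j\in J_0}A(b_j,-)\to\bigoplus_{i\in I_0}A(a_i,-)$ form a directed poset, and $F$ is the direct limit over this poset of the corresponding cokernels, each of which is finitely presented by your characterisation (again using additivity of $A$). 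Alternatively, show that the category of morphisms $P\to F$ with $P$ finitely presented is filtered with colimit $F$ (closure of finitely presented objects under finite direct sums and cokernels gives filteredness). With such an argument inserted, together with your skeletal-smallness remark, the rest of your proof stands.
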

\begin{cor}\thlabel{fpfunc2}If $A$ is any small pre-additive category then $A\Lmods$ is locally finitely presented, and $A\lmods$ consists of those functors $F$ which appear in an exact sequence of the form $\bigoplus^m_{i=1}A(a_i,-)\to \bigoplus^n_{j=1}A(b_j,-)\to F\to 0$ for positive integers $m$ and $n$ and objects $a_1,\dots,a_m,b_1,\dots,b_n\in A$.
\end{cor}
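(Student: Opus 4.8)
The plan is to reduce the general pre-additive case to the additive case of \thref{fpfunc1} by passing to an additive completion. First I would recall that any small pre-additive category $A$ embeds into its additive completion $A^\oplus$ (the category of formal finite direct sums of objects of $A$, with matrices of $A$-morphisms as morphisms), via a full and faithful functor $A\hookrightarrow A^\oplus$ that is the identity on objects. The key categorical fact is that restriction along this embedding induces an equivalence $A^\oplus\Lmods\simeq A\Lmods$: an additive functor $A\to\Ab$ extends uniquely (up to natural isomorphism) to an additive functor $A^\oplus\to\Ab$, since an additive functor must send the formal direct sum $\bigoplus_i a_i$ to $\bigoplus_i Fa_i$. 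This equivalence is exact, being an equivalence of abelian functor categories, and it sends the representable functor $A^\oplus(c,-)$ for $c=\bigoplus_{i=1}^m a_i$ to $\bigoplus_{i=1}^m A(a_i,-)$, again because representables on $A^\oplus$ decompose as direct sums of representables pulled back from $A$.

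Granting this, the result follows quickly. Since $A^\oplus$ is a small additive category, \thref{fpfunc1} applies: $A^\oplus\Lmods$ is locally finitely presented, and $A^\oplus\lmods$ consists of those $F$ fitting into an exact sequence $A^\oplus(c,-)\to A^\oplus(d,-)\to F\to 0$ with $c,d\in A^\oplus$. Transporting along the equivalence $A^\oplus\Lmods\simeq A\Lmods$, we get that $A\Lmods$ is locally finitely presented. Writing $c=\bigoplus_{i=1}^m a_i$ and $d=\bigoplus_{j=1}^n b_j$ and using $A^\oplus(c,-)\cong\bigoplus_{i=1}^m A(a_i,-)$ and $A^\oplus(d,-)\cong\bigoplus_{j=1}^n A(b_j,-)$, the presenting sequence becomes exactly
\begin{displaymath}
\bigoplus_{i=1}^m A(a_i,-)\to\bigoplus_{j=1}^n A(b_j,-)\to F\to 0,
\end{displaymath}
as required. (One subtlety: the definition of $A\lmods$ asks for the finitely presented objects of $A\Lmods$; the equivalence identifies these with the finitely presented objects of $A^\oplus\Lmods$, so the characterisation transfers verbatim. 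One should also allow $m$ or $n$ to be zero, or equivalently note the empty direct sum is the zero object; this is a harmless boundary case.)

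The main obstacle — really the only point requiring care — is verifying cleanly that restriction along $A\hookrightarrow A^\oplus$ is an equivalence onto $A\Lmods$ and that it matches up representables as claimed. This is standard (it is the statement that the additive completion is a reflection of pre-additive categories into additive ones, and that it is "absolutely dense"), but since the paper works with $\Ab$-valued functors throughout, I would spell out that an additive functor $F\colon A\to\Ab$ has a canonical additive extension $\overline F$ with $\overline F(\bigoplus_i a_i)=\bigoplus_i Fa_i$ and that $F\mapsto\overline F$ is quasi-inverse to restriction. Everything else — exactness of the equivalence, preservation of the local-finite-presentation structure, decomposition of representables — is then automatic. Alternatively, one can avoid the completion entirely and argue directly: every functor in $A\lmods$ is a cokernel of a map between finite direct sums of representables because such finite direct sums are finitely presented (each $A(a,-)$ is finitely presented by the Yoneda lemma, and finite direct sums of finitely presented objects are finitely presented) and generate $A\Lmods$ under colimits; but the completion argument is cleaner and makes the parenthetical remark about skeletal smallness of $A\lmods$ transparent, since $A^\oplus$ is small whenever $A$ is.
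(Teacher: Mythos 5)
Your proposal is correct and follows essentially the same route as the paper: the paper also passes to the additive completion (there called $A^+$, ``freely generated by $A$'' with finite strings of objects and matrices of morphisms), uses the induced equivalence $A^+\Lmods\to A\Lmods$, and transports the characterisation from \thref{fpfunc1}. Your write-up merely makes explicit the decomposition of representables $A^\oplus(c,-)\cong\bigoplus_i A(a_i,-)$, which the paper leaves implicit in ``hence the stated characterisation.''
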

\begin{proof}
One can construct a small additive category $A^+$ which is freely generated by $A$: its objects are finite strings of objects from $A$ and its morphisms are matrices consisting of morphisms of $A$. The inclusion functor $A\to A^+$ induces an equivalence $A^+\Lmods\to A\Lmods$. Therefore $A\Lmods$ is locally finitely presented and the finitely presented objects can therefore be found by applying this equivalence to those in $A^+\Lmods$, hence the stated characterisation.
\end{proof}
\begin{cor}\thlabel{fpfunc3}If $A$ is a ring then $A\Lmods$ is locally finitely presented, and $A\lmods$ consists of those modules $M$ which appear in an exact sequence of the form $A^m\to A^n\to M\to 0$ where $m$ and $n$ are positive integers.
\end{cor}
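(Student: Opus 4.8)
The plan is to deduce this directly from \thref{fpfunc2} by regarding the ring $A$ as a pre-additive category with one object. First I would let $\mathcal{A}$ denote the pre-additive category with a single object $*$ and $\mathcal{A}(*,*)=A$, composition being multiplication in $A$. This category is trivially skeletally small, so \thref{fpfunc2} applies to it. The standard identification — essentially the Yoneda lemma together with the observation that, for a functor $F:\mathcal{A}\to\Ab$, each $r\in\mathcal{A}(*,*)$ induces an endomorphism $Fr$ of the abelian group $F*$, equipping $F*$ with an $A$-module structure — gives an equivalence $\mathcal{A}\Lmods\simeq A\Lmods$ which is compatible with the module-side conventions fixed earlier in the paper, and which carries the representable functor $\mathcal{A}(*,-)$ to $A$ viewed as a module over itself. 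In particular $A\Lmods$ is locally finitely presented.

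Next I would unwind the description of $\mathcal{A}\lmods=(\mathcal{A}\Lmods)^\fp$ supplied by \thref{fpfunc2}. Since $\mathcal{A}$ has only the object $*$, every choice of objects $a_1,\dots,a_m,b_1,\dots,b_n\in\mathcal{A}$ is forced to be $a_i=b_j=*$, so the exact sequence $\bigoplus^m_{i=1}\mathcal{A}(a_i,-)\to\bigoplus^n_{j=1}\mathcal{A}(b_j,-)\to F\to 0$ of that corollary becomes $\mathcal{A}(*,-)^m\to\mathcal{A}(*,-)^n\to F\to 0$. Transporting along the equivalence above, this is exactly an exact sequence $A^m\to A^n\to M\to 0$ of $A$-modules, which yields the claimed characterisation of $A\lmods$.

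There is no genuine obstacle here; the only point needing a little care is the routine bookkeeping around the equivalence $\mathcal{A}\Lmods\simeq A\Lmods$ — checking it is compatible with the left/right conventions and that it sends the representable functor to the free module of rank one — after which the corollary is immediate. (If one prefers to avoid invoking \thref{fpfunc2}, a direct argument is also available: $A\Lmods$ has all direct limits; $A$ is a finitely presented object since the representable functor $(A,-)$ is isomorphic to the forgetful functor $A\Lmods\to\Ab$, which preserves direct limits; every module is a direct limit of finitely presented ones; and a module is finitely presented exactly when it admits a presentation $A^m\to A^n\to M\to 0$.)
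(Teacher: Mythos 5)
Your proposal is correct and is essentially the paper's own (implicit) argument: the paper states this corollary without proof, treating it as the specialisation of \thref{fpfunc2} to the one-object pre-additive category determined by the ring $A$, under the identification of additive functors $A\to\Ab$ with left $A$-modules that the paper's notation $A\Lmods$ already builds in. Your unwinding of the direct sums of representables to $A^m\to A^n\to M\to 0$ is exactly the intended reading.
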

\begin{thm}\thlabel{auslander}\cite[2.1]{auslander1965} Let $A$ be an abelian category and let $P$ be a full additive subcategory of $A$ consisting only of projective objects. Let $P'$ be the full subcategory of $A$ consisting of all objects $a\in A$ for which there is an exact sequence $p\to q\to a\to 0$ with $p,q\in P$. Then the following hold.
\begin{list}{*}{â€¢}
\item[(a)]$P'$ is closed under extensions.
\item[(b)]$P'$ is closed under cokernels.
\item[(c)]If $P$ has the additional property that, for any morphism $(q\to r)\in P$ there is an exact sequence $(p\to q\to r)\in A$ with $p\in P$, then $P'$ is closed under kernels.
\end{list}
\end{thm}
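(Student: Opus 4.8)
The plan is to prove all three parts with one small toolkit: the horseshoe and snake lemmas, together with the fact that the objects of $P$ are projective, so that any morphism out of an object of $P$ lifts along an epimorphism. The running observation is that an object $x\in A$ lies in $P'$ precisely when it admits an epimorphism $q\twoheadrightarrow x$ with $q\in P$ whose kernel receives an epimorphism from an object of $P$: splicing the second epimorphism onto the first yields an exact sequence $p\to q\to x\to 0$ with $p,q\in P$. Moreover, since $P$ consists of projectives, the class of objects receiving an epimorphism from $P$ is closed under finite extensions (lift the epimorphism onto a quotient back up). So in each part the job reduces to building an epimorphism onto the object of interest from a finite biproduct of objects of $P$, and showing its kernel is assembled by finitely many extensions out of objects that each receive an epimorphism from $P$.

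For (a): given $0\to a'\to a\to a''\to 0$ with $P$-presentations $p'\to q'\to a'\to 0$ and $p''\to q''\to a''\to 0$, lift $q''\to a''$ along $a\to a''$ and form $q'\oplus q''\to a$; the horseshoe lemma makes this an epimorphism whose kernel is an extension of $\ker(q''\to a'')$ by $\ker(q'\to a')$, and both of these receive $P$-epimorphisms (from $p''$, $p'$), so $a\in P'$. For (b): given $f\colon a\to b$ with $a,b\in P'$, take $q_b\twoheadrightarrow b$ with $q_b\in P$; then $q_b\twoheadrightarrow b\twoheadrightarrow\Coker f$ is epi, and its kernel is an extension of $\Img f$ by $\ker(q_b\to b)$ — the composite $q_a\twoheadrightarrow a\twoheadrightarrow\Img f$ covers the first, the presentation of $b$ covers the second — so $\Coker f\in P'$. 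Both are routine diagram chases.

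The substance is in (c), the only place the extra hypothesis enters. The key point is that the hypothesis is exactly what is needed to extend a length-two $P$-presentation to a full one-sided $P$-resolution: given $p_1\to p_0\to a\to 0$ exact, applying the hypothesis to $p_1\to p_0$ produces $p_2\to p_1\to p_0$ exact, and iterating gives $\cdots\to p_2\to p_1\to p_0\to a\to 0$ with all terms in $P$; thus every object of $P'$ has a $P$-resolution. Now, for $f\colon a\to b$ with $a,b\in P'$, pick $P$-resolutions $P_\bullet\to a$ and $Q_\bullet\to b$, lift $f$ to a chain map $\phi\colon P_\bullet\to Q_\bullet$ (projectivity of the $P_i$), and form the mapping cone $C_\bullet$, whose terms $P_{n-1}\oplus Q_n$ again lie in $P$. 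The homology long exact sequence of $0\to Q_\bullet\to C_\bullet\to P_\bullet[-1]\to 0$ shows $C_\bullet$ is acyclic in degrees $\geq 2$ with $H_1(C_\bullet)\cong\ker f$ and $H_0(C_\bullet)\cong\Coker f$. Writing $Z_1=\ker(C_1\to C_0)$ and $B_1=\Img(C_2\to C_1)$, acyclicity in degree $2$ gives $C_3\to C_2\to B_1\to 0$ exact, so $B_1\in P'$; and applying the hypothesis twice — first to $C_1\to C_0$, then to the resulting map into $C_1$ — produces a length-two $P$-presentation of $Z_1$, so $Z_1\in P'$. Hence $\ker f\cong Z_1/B_1$ is a cokernel of a morphism between objects of $P'$, and part (b) finishes the proof.

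I expect the main obstacle to be in (c): recognising the right model of $\ker f$ — namely as $H_1$ of the mapping cone of a comparison of $P$-resolutions — so that the hypothesis on $P$ can actually be brought to bear, and then keeping bookkeeping control so that every auxiliary object that appears ($B_1$, $Z_1$, and the kernel-extensions in (a) and (b)) is verifiably in $P'$ or receives a $P$-epimorphism. Parts (a) and (b) are essentially the horseshoe lemma once the splicing observation is in place.
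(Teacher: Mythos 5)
The paper does not prove this statement at all: it is quoted verbatim as background, with the proof deferred to Auslander's \emph{Coherent functors} \cite[2.1]{auslander1965}. So there is no in-paper argument to compare against; what you have done is supply a self-contained proof, and it is correct. Your reduction of membership in $P'$ to ``admits a $P$-epimorphism whose kernel admits a $P$-epimorphism,'' together with the observation that the class of objects admitting a $P$-epimorphism is closed under extensions (lift along the quotient using projectivity, then the five lemma), makes (a) and (b) genuinely routine, and the exact sequence $0\to\Ker g\to\Ker(hg)\to\Ker h\to 0$ for composable epimorphisms is used correctly in (b). In (c) the key steps all check out: the extra hypothesis is exactly what lets you prolong a two-step $P$-presentation to a full $P$-resolution; the comparison map of resolutions exists by projectivity of the $P_n$; the cone terms $P_{n-1}\oplus Q_n$ lie in $P$ because $P$ is a full \emph{additive} subcategory; $H_1(C_\bullet)\cong\Ker f$ and $H_n(C_\bullet)=0$ for $n\geq 2$ follow from the long exact sequence; $B_1\in P'$ from acyclicity in degree $2$; $Z_1\in P'$ by applying the hypothesis to $C_1\to C_0$ and then to the resulting $p\to C_1$ (noting $\Ker(p\to Z_1)=\Ker(p\to C_1)$ since $Z_1\hookrightarrow C_1$ is monic); and $\Ker f\cong Z_1/B_1=\Coker(B_1\hookrightarrow Z_1)$ lands in $P'$ by part (b). This is essentially Auslander's argument recast in the language of mapping cones rather than an explicit lift of presentations followed by a diagram chase; the cone packaging makes the bookkeeping in (c) cleaner at the cost of invoking the long exact homology sequence.
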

\begin{defn}Let $A$ be a pre-additive category. A \textbf{weak cokernel} of a morphism $(f:a\to b)\in A$ is a morphism $(g:b\to c)\in A$ such that, for every object $x\in A$ the sequence
\begin{displaymath}
\xymatrix{A(c,x)\ar[r]^{A(g,x)}&A(b,x)\ar[r]^{A(f,x)}&A(a,x)}
\end{displaymath}
is exact. If every morphism in $A$ has a weak cokernel then we say that $A$ \textbf{has weak cokernels} or is \textbf{with weak cokernels}.
\end{defn}
\begin{cor}\thlabel{weakcokernels}For any small additive category $A$ with weak cokernels, $A\lmods$ is an abelian subcategory of $A\Lmods$.
\end{cor}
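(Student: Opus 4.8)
The plan is to deduce the statement from \thref{auslander}, applied to the abelian category $\mathcal{A}=A\Lmods$ with $P$ the full subcategory of $A\Lmods$ whose objects are the representable functors $A(a,-)$, $a\in A$. First I would check that $P$ satisfies the hypotheses of \thref{auslander}: it is additive because $A(a,-)\oplus A(b,-)\cong A(a\oplus b,-)$ (using that $A$ is additive) and $A(0,-)=0$; and every object of $P$ is projective in $A\Lmods$, since by the Yoneda lemma $(A(a,-),F)\cong Fa$ naturally in $F$, and evaluation at $a$ is exact (kernels and cokernels in $A\Lmods$ are computed objectwise).

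Next I would identify the subcategory $P'$ of \thref{auslander} with $A\lmods$: by definition $P'$ consists of the functors $F$ fitting into an exact sequence $A(a,-)\to A(b,-)\to F\to 0$, which by \thref{fpfunc1} is precisely $A\lmods$. Part (b) of \thref{auslander} then gives that $A\lmods$ is closed under cokernels in $A\Lmods$.

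The crucial point is to verify the extra hypothesis in \thref{auslander}(c), and this is exactly where ``$A$ has weak cokernels'' is used. A morphism in $P$ is, by the Yoneda lemma, a natural transformation $A(f,-)\colon A(b,-)\to A(a,-)$ induced by a unique morphism $f\colon a\to b$ of $A$; and for $g\colon b\to c$ the induced sequence $A(c,-)\to A(b,-)\to A(a,-)$ is exact at every object of $A$ if and only if $g$ is a weak cokernel of $f$. So, given a morphism in $P$, choosing a weak cokernel of the corresponding morphism of $A$ produces exactly an exact sequence $A(c,-)\to A(b,-)\to A(a,-)$ with $A(c,-)\in P$, as \thref{auslander}(c) requires. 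Hence $A\lmods$ is also closed under kernels in $A\Lmods$.

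Finally I would invoke the standard fact that a full additive subcategory of an abelian category that contains $0$ and is closed under finite direct sums, kernels and cokernels is itself abelian, with exact inclusion functor; applying this to $A\lmods\subseteq A\Lmods$ finishes the proof. The argument is essentially routine once \thref{auslander} is available; the only subtlety worth care is the variance bookkeeping in the Yoneda lemma when translating the weak-cokernel condition into the hypothesis of \thref{auslander}(c).
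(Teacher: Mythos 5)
Your proof is correct and is exactly the argument the paper intends: the corollary is deduced from \thref{auslander} applied to the representable functors in $A\Lmods$, with \thref{fpfunc1} identifying $P'$ with $A\lmods$ and the weak-cokernel hypothesis supplying condition (c). Your variance bookkeeping in translating weak cokernels in $A$ into exact sequences of representables is also handled correctly.
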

\begin{cor}\thlabel{covarabelian}For any small pre-additive category $A$, $(A\lmods)\lmods$ is an abelian subcategory of $(A\lmods)\Lmods$.
\end{cor}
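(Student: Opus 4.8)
The plan is to obtain this directly from \thref{weakcokernels}: I claim that $A\lmods$ is (equivalent to) a small additive category with weak cokernels, and then \thref{weakcokernels}, applied with $A\lmods$ in the role of ``$A$'', says exactly that $(A\lmods)\lmods$ is an abelian subcategory of $(A\lmods)\Lmods$. So all the work lies in verifying that $A\lmods$ satisfies the hypotheses of \thref{weakcokernels}.

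First I would check the easy hypotheses. By \thref{fpfunc2} the category $A\Lmods$ is locally finitely presented, so $A\lmods=(A\Lmods)^\fp$ is skeletally small and may be replaced by a small full subcategory, which changes neither $(A\lmods)\Lmods$ nor $(A\lmods)\lmods$ up to equivalence. It is additive even though $A$ is only pre-additive: being a full subcategory of the $\Ab$-enriched category $A\Lmods$ it is pre-additive, it contains the zero functor, and it is closed under finite direct sums, since a finite biproduct of direct-limit-preserving functors again preserves direct limits (finite products commute with direct limits in $\Ab$).

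The substantive point is that $A\lmods$ has weak cokernels, and for this it is enough to show that $A\lmods$ is closed under cokernels taken in $A\Lmods$: an honest cokernel in $A\Lmods$ of a morphism between objects of $A\lmods$ is in particular a weak cokernel in $A\lmods$, because the defining exactness holds against every object of $A\Lmods$ and hence a fortiori against every object of $A\lmods$. To get this closure I would apply \thref{auslander} to the abelian category $A\Lmods$, taking $P$ to be the full additive subcategory of finite direct sums of representable functors $A(a,-)$; these are projective in $A\Lmods$ by Yoneda, and by \thref{fpfunc2} the associated subcategory $P'$ of functors admitting a presentation $p\to q\to F\to 0$ with $p,q\in P$ is precisely $A\lmods$. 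Then \thref{auslander}(b) yields that $P'=A\lmods$ is closed under cokernels, which completes the verification, and \thref{weakcokernels} then finishes the proof. The only step that is not pure bookkeeping is this appeal to \thref{auslander} to close $A\lmods$ under cokernels; once that is in hand everything reduces to the already-proved \thref{weakcokernels}.
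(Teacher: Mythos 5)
Your proposal is correct and follows exactly the route the paper intends: the corollary is meant to follow from \thref{weakcokernels} once one knows that $A\lmods$ is a (skeletally) small additive category with weak cokernels, and your verification of that — closure of $A\lmods$ under cokernels in $A\Lmods$ via \thref{auslander}(b) applied to the finite direct sums of representables, so that cokernels serve as weak cokernels — is the standard and intended argument. No gaps.
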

\begin{cor}\thlabel{noetheriancor}For any left coherent ring $R$, $R\lmods$ is an abelian subcategory of $R\Lmods$, and hence $\rmods(R\lmods)$ is an abelian subcategory of $\Rmods(R\lmods)$.
\end{cor}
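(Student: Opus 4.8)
The plan is to deduce the first assertion from Auslander's \thref{auslander} and then to feed the result back into \thref{weakcokernels} to obtain the second. For the first assertion, apply \thref{auslander} with $A=R\Lmods$ (which is abelian) and $P$ the full subcategory of $R\Lmods$ whose objects are the finitely generated free left $R$-modules; this is a full additive subcategory consisting of projectives, and by \thref{fpfunc3} the subcategory $P'$ of \thref{auslander} --- the objects admitting an exact sequence $p\to q\to a\to 0$ with $p,q\in P$ --- is precisely $R\lmods$. Parts (a) and (b) of \thref{auslander} then give, with no hypothesis on $R$, that $R\lmods$ is closed in $R\Lmods$ under extensions and cokernels.

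The real content is the hypothesis of \thref{auslander}(c): that every morphism $\alpha\colon q\to r$ in $P$ fits into an exact sequence $p\to q\to r$ with $p\in P$ --- equivalently, that $P$ has weak kernels --- and this is exactly what left coherence buys. Indeed, for such an $\alpha$ between finitely generated free left $R$-modules, $\Img\alpha$ is a finitely generated submodule of a finitely generated free module, hence finitely presented because $R$ is left coherent; then in $0\to\Ker\alpha\to q\to\Img\alpha\to 0$ the module $q$ is finitely generated and $\Img\alpha$ is finitely presented, so $\Ker\alpha$ is finitely generated, and any epimorphism onto $\Ker\alpha$ from a finitely generated free module $p$, composed with $\Ker\alpha\hookrightarrow q$, is the required morphism $p\to q$. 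Hence \thref{auslander}(c) applies and $R\lmods$ is closed under kernels in $R\Lmods$ as well; being a full additive subcategory of the abelian category $R\Lmods$ closed under kernels and cokernels, it is an abelian subcategory. (One could instead invoke the classical equivalence between left coherence of $R$ and abelianness of $R\lmods$, but the above keeps the argument self-contained.)

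For the ``hence'', recall that $R\lmods=(R\Lmods)^\fp$ is skeletally small (by \thref{fpfunc3}) and additive, so a skeleton of $(R\lmods)^\op$ is a small additive category; since $R\lmods$ is now known to be abelian it has kernels, and a kernel is in particular a weak kernel, so $(R\lmods)^\op$ has weak cokernels. Applying \thref{weakcokernels} to $(R\lmods)^\op$ shows that $\rmods(R\lmods)=\big((R\lmods)^\op\big)\lmods$ is an abelian subcategory of $\Rmods(R\lmods)=\big((R\lmods)^\op\big)\Lmods$.

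I expect the only real obstacle to be the second paragraph: recognising that the hypothesis of \thref{auslander}(c) is precisely the statement that the category of finitely generated projective (equivalently, free) modules has weak kernels, and that left coherence --- in the guise ``every finitely generated submodule of a finitely generated free left module is finitely presented'' --- is exactly the tool that supplies it; a little care with the left/right conventions is also needed, since it is genuinely left coherence, not right, that enters here.
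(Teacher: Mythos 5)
Your proof is correct and follows the route the paper intends (the corollary is left unproved there, as a consequence of \thref{auslander} and \thref{weakcokernels}): you apply \thref{auslander} with $P$ the finitely generated frees, supply the hypothesis of part (c) from left coherence, and then feed the resulting kernels of $R\lmods$ into \thref{weakcokernels} for $(R\lmods)^\op$. The coherence argument in your second paragraph is exactly the missing content and is carried out correctly.
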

\begin{rmk}In particular, \thref{noetheriancor} applies to artin algebras, since all artin algebras are coherent.
\end{rmk}\begin{lem}\thlabel{lemmaf}Let $C$ be a locally finitely presented additive category. Then the following hold.
\begin{list}{*}{}
\item[(a)]If $\alpha:F\to F'$ is a natural transformation between additive functors $F,F':C\to\Ab$ which preserve direct limits such that for all $c\in C^\fp$ the morphism $\alpha_c:Fc\to F'c$ is a monomorphism (respectively, an epimorphism) then for any $d\in C$ the morphism $\alpha_d:Fd\to F'd$ is a monomorphism (respectively, an epimorphism).
\item[(b)]If $\beta:G\to G'$ is a natural transformation between additive functors $G,G':C^\op\to\Ab$ which preserve inverse limits such that for all $c\in C^\fp$ the morphism $\beta_c:Gc\to G'c$ has the property of being either a monomorphism (respectively, an isomorphism) then for any $d\in C$ the morphism $\alpha_d:Fd\to F'd$ is monomorphism (respectively, an isomorphism).
\end{list}
\end{lem}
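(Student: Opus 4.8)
The plan is to reduce both parts to standard exactness properties of filtered colimits and of inverse limits of abelian groups, using only the defining feature of a locally finitely presented category: every object is a direct limit of finitely presented ones. So for part (a), fix $d\in C$ and choose a direct system $(c_i)_{i\in I}$ in $C^\fp$ with $d=\varinjlim_i c_i$; write $\mu_i\colon c_i\to d$ for the colimit cocone. Since $F$ and $F'$ preserve direct limits, the cocones $(F\mu_i)_i$ and $(F'\mu_i)_i$ exhibit $Fd$ and $F'd$ as the direct limits of the systems $(Fc_i)_i$ and $(F'c_i)_i$ in $\Ab$. Using naturality of $\alpha$ against the $\mu_i$ and against the transition maps of the system --- together with the uniqueness of a morphism out of a colimit --- one identifies $\alpha_d\colon Fd\to F'd$ with the colimit $\varinjlim_i\alpha_{c_i}$ of the induced morphism of direct systems. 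Now appeal to the facts that in $\Ab$ filtered colimits are exact (hence send monomorphisms to monomorphisms) and that every colimit functor, being a left adjoint, preserves epimorphisms: if each $\alpha_{c_i}$ is a monomorphism (respectively, an epimorphism), then so is $\alpha_d$.

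For part (b), again fix $d\in C$ and write $d=\varinjlim_i c_i$ with $c_i\in C^\fp$ as above; regarded in $C^\op$ this is an inverse limit $d=\varprojlim_i c_i$. Since $G$ and $G'$ preserve inverse limits, $Gd$ and $G'd$ are the inverse limits of $(Gc_i)_i$ and $(G'c_i)_i$ in $\Ab$, and naturality of $\beta$ identifies $\beta_d\colon Gd\to G'd$ with $\varprojlim_i\beta_{c_i}$. The limit functor on a diagram category is a right adjoint, hence left exact and in particular monomorphism-preserving, and every functor preserves isomorphisms; so if each $\beta_{c_i}$ is a monomorphism (respectively, an isomorphism), then so is $\beta_d$. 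This is exactly the point at which (b) is weaker than (a): inverse limits in $\Ab$ need not be right exact, so the epimorphism case cannot be promoted, and only the monomorphism and isomorphism cases survive.

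The only step requiring a little care, and it is entirely routine, is the identification of $\alpha_d$ (respectively $\beta_d$) with the (co)limit of its components --- a short diagram chase combining naturality of the transformation with the universal property of the relevant (co)limit, plus the observation that a levelwise isomorphism of (co)systems induces an isomorphism of (co)limits. Beyond keeping the variance straight in part (b), I expect no serious obstacle.
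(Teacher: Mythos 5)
Your proof is correct and follows essentially the same route as the paper's: write an arbitrary object as a direct limit of finitely presented ones, identify the component of the natural transformation at that object with the (co)limit of its components on the finitely presented pieces, and invoke exactness of filtered colimits, left exactness of inverse limits, and preservation of isomorphisms by limits. The paper states this in one sentence; your version simply spells out the same argument, including the correct observation of why the epimorphism case does not survive in part (b).
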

\begin{proof}
This follows directly from the fact that every object of $C$ is a direct limit of finitely presented objects and the fact that direct limits are exact and inverse limits are left exact and preserve isomorphisms (the same would hold if $\Ab$ was replaced by any abelian category with exact direct limits).
\end{proof}
\begin{lem}\thlabel{faithext}Let $F:C\to D$ be an additive functor from a locally finitely presented additive category $C$ to an additive category $D$ which has direct limits, and suppose $F$ preserves direct limits and sends finitely presented objects to finitely presented objects. If $F|_{C^\fp}:C^\fp\to D$ is (fully) faithful then $F:C\to D$ is (fully) faithful.
\end{lem}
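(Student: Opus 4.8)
The plan is to reduce the general statement to the hypothesis on $C^\fp$ in two stages, using the fact that $C(c,-)$ commutes with direct limits for $c\in C^\fp$, that $D(Fc,-)$ commutes with direct limits since $Fc\in D^\fp$, and that filtered colimits in $\Ab$ are exact.

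\emph{First stage.} Fix an object $Y\in C$ and consider the two functors $C(-,Y),\,D(F-,FY):C^\op\to\Ab$, together with the natural transformation $\beta$ between them induced by $F$. Both of these functors preserve inverse limits: a $\Hom$-functor turns colimits in its source into limits in $\Ab$, and $F$ preserves direct limits, so $D(F-,FY)$ sends a direct limit in $C$ (an inverse limit in $C^\op$) to an inverse limit in $\Ab$. Hence by \thref{lemmaf}(b) it suffices to show that $\beta_c:C(c,Y)\to D(Fc,FY)$ is a monomorphism (for faithfulness) or an isomorphism (for full faithfulness) for every $c\in C^\fp$.

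\emph{Second stage.} Let $c\in C^\fp$ and $Y\in C$ be arbitrary, and write $Y=\varinjlim_j Y_j$ as the direct limit of a direct system with each $Y_j\in C^\fp$. Since $c$ is finitely presented we have $C(c,Y)\cong\varinjlim_j C(c,Y_j)$; since $Fc$ is finitely presented in $D$ and $F$ preserves direct limits we have $D(Fc,FY)=D(Fc,\varinjlim_j FY_j)\cong\varinjlim_j D(Fc,FY_j)$; and under these canonical identifications $\beta_c$ is the filtered colimit of the maps $C(c,Y_j)\to D(Fc,FY_j)$. Each of those maps is injective (respectively bijective) because $F|_{C^\fp}$ is (fully) faithful and $c,Y_j\in C^\fp$. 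Filtered colimits in $\Ab$ are exact, hence preserve monomorphisms, and a filtered colimit of isomorphisms is an isomorphism; so $\beta_c$ is a monomorphism (respectively an isomorphism). Combining the two stages gives the result.

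I do not expect a genuine obstacle here; the only point requiring care is to keep the two colimit-commutation facts distinct — one uses that $c$ is finitely presented in $C$, the other that $Fc$ is finitely presented in $D$ — and to note that $\beta_c$ really is the colimit of its component maps, which holds because the three isomorphisms above are the canonical ones. As an alternative one could avoid \thref{lemmaf} and simply rerun the same "colimit of monomorphisms'' argument in the other variable as well, writing $X=\varinjlim_i X_i$ and using $C(\varinjlim_i X_i,Y)\cong\varprojlim_i C(X_i,Y)$ together with left exactness of inverse limits; invoking \thref{lemmaf}(b) is merely the shorter route.
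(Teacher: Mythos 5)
Your proof is correct and is essentially the paper's argument: the paper likewise treats the two variables of $F_{ab}:C(a,b)\to D(Fa,Fb)$ one at a time, using that $C(a,-)$ and $D(Fa,F-)$ preserve direct limits when $a\in C^\fp$ (your second stage, which the paper handles by citing \thref{lemmaf}(a) rather than unrolling the colimit) and then \thref{lemmaf}(b) for the contravariant variable (your first stage). The only difference is the order in which the two reductions are presented.
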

\begin{proof}
Consider, for objects $a,b\in C$, the homomorphism
\begin{displaymath}
F_{ab}:C(a,b)\to D(Fa,Fb):f\mapsto Ff.
\end{displaymath}
It is natural in $a$ and $b$, and when $a$ is finitely presented, the functors $C(a,-),D(Fa,F-):C\to\Ab$ preserve direct limits since $D(Fa,F-)$ is the composition of functors
\begin{displaymath}
\xymatrix{C\ar[r]^F&D\ar[r]^{D(Fa,-)}&\Ab,}
\end{displaymath}
both of which preserve direct limits.
Therefore, since, when $a$ and $b$ are finitely presented the map $F_{ab}$ is a monomorphism, it follows that $F_{ab}$ is a monomorphism if $a$ is finitely presented by \thref{lemmaf}.

We have just shown that, for any object $b\in C$, the morphism $F_{ab}$ is a monomorphism when $a$ is finitely presented. The functors $C(-,b),D(F-,Fb):C^\op\to\Ab$ preserve inverse limits, since $D(F-,Fb)$ is the composition
\begin{displaymath}
\xymatrix{C^\op\ar[r]^{F^\op}&D^\op\ar[r]^{D(-,Fb)}&\Ab}
\end{displaymath}
of functors which preserve inverse limits.

It follows that $F_{ab}$ is a monomorphism for any objects $a,b\in C$ by \thref{lemmaf}.

One can replace ``monomorphism'' by ``isomorphism'' throughout the proof.
\end{proof}
\begin{defn}\thlabel{extdef}Let $C$ be a locally finitely presented functor and let $F:C^\fp\to\Ab$ be an additive functor. We define the functor $\overrightarrow{F}:C\to\Ab$ by $\overrightarrow{F}c=C(-,c)|_{C^{\fp\op}}\otimes _{C^\fp}F$.
\end{defn}
\begin{lem}Let $C$ be a locally finitely presented category and let $F:C^\text{fp}\to\Ab$ be an additive functor. There is an isomorphism $\overrightarrow{F}|_{C^\fp}\cong F$ which is natural in $F$ and $\overrightarrow{F}$ preserves direct limits.
\end{lem}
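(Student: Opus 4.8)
The plan is to exhibit $\overrightarrow{F}$ as a composite of two functors, each of which is transparently well behaved, and then to read off both assertions from that factorisation.

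Fix a small skeleton of $C^{\fp}$ (legitimate, since local finite presentability makes $C^{\fp}$ skeletally small) and let $Y\colon C\to\Rmods C^{\fp}$ be the restricted Yoneda functor, $Yc=C(-,c)|_{C^{\fp}}$. By \thref{extdef}, $\overrightarrow{F}$ is exactly the composite
\[
C\;\xrightarrow{\ Y\ }\;\Rmods C^{\fp}\;\xrightarrow{\ -\otimes_{C^{\fp}}F\ }\;\Ab ,
\]
so $\overrightarrow{F}$ is in particular a well-defined additive functor $C\to\Ab$, functorial in $F$. For the restriction isomorphism, observe that for $c\in C^{\fp}$ the object $Yc$ is the representable functor $(C^{\fp})(-,c)$ on the small preadditive category $C^{\fp}$. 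Applying \thref{tensyoneda} with $A=C^{\fp}$ gives an isomorphism $(C^{\fp})(-,c)\otimes_{C^{\fp}}F\cong Fc$ natural in $c$ and in $F$; unwinding the composite above, this is precisely an isomorphism $\overrightarrow{F}|_{C^{\fp}}\cong F$, natural in $F$.

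For preservation of direct limits it suffices that each functor in the composite preserve them. The functor $-\otimes_{C^{\fp}}F$ is even cocontinuous: $N\otimes_{C^{\fp}}F$ is a coend, hence a colimit, colimits in $\Rmods C^{\fp}$ are computed pointwise, and $-\otimes_{\mathbb Z}(Fa)$ preserves colimits for each $a$, so $N\mapsto N\otimes_{C^{\fp}}F$ preserves colimits because colimits commute with colimits; alternatively, it is a left adjoint by \thref{homtenspreadd}. The functor $Y$ preserves direct limits because direct limits in $\Rmods C^{\fp}$ are computed pointwise and, for each $a\in C^{\fp}$, evaluating $Y(-)$ at $a$ yields $C(a,-)$, which preserves direct limits exactly because $a$ is finitely presented. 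Hence $\overrightarrow{F}=(-\otimes_{C^{\fp}}F)\circ Y$ preserves direct limits.

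None of the steps is genuinely hard: the only point requiring care is the set-theoretic one of replacing $C^{\fp}$ by a small skeleton before forming $\Rmods C^{\fp}$ and the tensor product, and noting independence of this choice — immediate, since equivalent small preadditive categories yield the same tensor products and the isomorphism of \thref{tensyoneda} is natural. The one conceptually load-bearing observation is that $Y$ preserves direct limits, which is precisely where the defining property of the objects of $C^{\fp}$ enters.
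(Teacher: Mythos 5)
Your proof is correct and follows exactly the paper's route: factor $\overrightarrow{F}$ as the restricted Yoneda functor followed by $-\otimes_{C^{\fp}}F$, obtain the restriction isomorphism from \thref{tensyoneda}, and deduce preservation of direct limits from the fact that both factors preserve them. The only difference is that you spell out why each factor preserves direct limits, which the paper leaves implicit.
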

\begin{proof}
The first statement follows directly from the definition and \thref{tensyoneda}. That $\overrightarrow{F}$ preserves direct limits is due to the fact that it is the composition of functors
\begin{displaymath}
\xymatrix{C\ar[rr]^\Phi&&\Rmods C^\fp\ar[rr]^{-\otimes_{C^\fp}F}&&\Ab,}
\end{displaymath}
both of which preserve direct limits, where $\Phi$ sends $c\in C$ to $C(-,c)|_{C^{\fp\op}}$.
\end{proof}
\begin{lem}\thlabel{blah}Let $C$ be a locally finitely presented category and let $F:C\to\Ab$ be an additive functor which preserves direct limits. For any $c\in C$, there is an isomorphism
\begin{displaymath}
\overrightarrow{F|_{C^\fp}}c\to Fc
\end{displaymath}
which is natural in $F$ and $c$.
\end{lem}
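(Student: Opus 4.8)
The plan is to construct an explicit natural transformation $\theta^{F}\colon\overrightarrow{F|_{C^\fp}}\to F$ and then verify that all of its components are isomorphisms. Fix $c\in C$. For each finitely presented $b$, the map
$$C(b,c)\otimes_{\mathbb Z}Fb\longrightarrow Fc,\qquad f\otimes x\longmapsto (Ff)x,$$
is dinatural in $b$, so by the universal property of the coend defining $\overrightarrow{F|_{C^\fp}}c=C(-,c)|_{C^{\fp\op}}\otimes_{C^\fp}F|_{C^\fp}$ these maps factor through a unique homomorphism $\theta^{F}_{c}\colon\overrightarrow{F|_{C^\fp}}c\to Fc$. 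The same universal property, applied to the defining (di)naturality squares, shows that $\theta^{F}_{c}$ is natural in $c$, and that the assignment $F\mapsto\theta^{F}$ is natural in $F$.

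I would then note that when $c$ itself lies in $C^\fp$, the homomorphism $\theta^{F}_{c}$ is exactly the isomorphism $C^\fp(-,c)\otimes_{C^\fp}F|_{C^\fp}\cong(F|_{C^\fp})(c)$ provided by \thref{tensyoneda} for the (skeletally small) pre-additive category $C^\fp$; equivalently, it is the natural isomorphism of the preceding lemma. Hence $\theta^{F}_{c}$ is an isomorphism for every $c\in C^\fp$.

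To pass from $C^\fp$ to all of $C$, observe that $\overrightarrow{F|_{C^\fp}}$ preserves direct limits by the preceding lemma and $F$ preserves direct limits by hypothesis. Since $C$ is locally finitely presented, every object is a direct limit of finitely presented objects, so \thref{lemmaf}(a) --- applied once with ``monomorphism'' and once with ``epimorphism'', or directly in its ``isomorphism'' form as used in the proof of that lemma --- gives that $\theta^{F}_{c}$ is an isomorphism for every $c\in C$, naturally in $F$ and $c$.

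The only thing requiring care is the coend bookkeeping in the first paragraph: checking the wedge condition so that $\theta^{F}_{c}$ is well defined, and the naturality in $c$ and in $F$. This is routine and is essentially the same computation that already underlies \thref{tensyoneda} and the preceding lemma, so I expect no genuine obstacle.
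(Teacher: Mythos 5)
Your proposal is correct and follows essentially the same route as the paper: construct the canonical map $\overrightarrow{F|_{C^\fp}}c\to Fc$ from the dinatural family $f\otimes x\mapsto (Ff)x$ via the universal property of the coend, observe it is the isomorphism of \thref{tensyoneda} when $c$ is finitely presented, and then extend to all of $C$ using that both functors preserve direct limits together with \thref{lemmaf}. No gaps.
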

\begin{proof}
For any $d\in C^\fp$, there is a morphism $C(d,c)\otimes_\mathbb{Z}Fd\to Fc:f\otimes x\mapsto (Fd)x$. By the universal property of tensor products, these assemble into a morphism $C(-,c)|_{C^{\fp\op}}\otimes_{C^\fp}F|_{C^\fp}\to Fc$ which is natural in $F$ and $c$. By the proof of \thref{homtens}, this is an isomorphism when $c$ is finitely presented. Since $F$ preserves direct limits, it follows from \thref{lemmaf} that this natural transformation is an isomorphism for any $c\in C$.
\end{proof}
\begin{thm}\thlabel{genunqext}Let $C$ be a locally finitely presented category and let $F:C^\text{fp}\to\Ab$ be an additive functor. Then $\overrightarrow{F}:C\to\Ab$ is the unique additive functor which preserves direct limits and $\overrightarrow{F}|_{C^\fp}\cong F$.
\end{thm}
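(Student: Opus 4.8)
The plan is to split the statement into its existence half and its uniqueness half, and observe that each is essentially already in hand from the two lemmas immediately preceding. For existence there is nothing new to prove: the lemma stated just before \thref{blah} already records that $\overrightarrow{F}$ preserves direct limits and that there is a natural isomorphism $\overrightarrow{F}|_{C^\fp}\cong F$, so $\overrightarrow{F}$ is a witness to the required property. The content of the theorem is therefore entirely in the uniqueness clause, and the plan is to derive it from \thref{blah}.

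For uniqueness I would argue as follows. Suppose $G:C\to\Ab$ is an additive functor which preserves direct limits and for which there is an isomorphism $\theta:G|_{C^\fp}\xrightarrow{\ \sim\ }F$. Since $G$ preserves direct limits, \thref{blah} applies to $G$ and yields a natural isomorphism $\overrightarrow{G|_{C^\fp}}\xrightarrow{\ \sim\ }G$. Next I would note that the assignment $H\mapsto\overrightarrow{H}$ is functorial in the functor argument $H$: by \thref{extdef} we have $\overrightarrow{H}c=C(-,c)|_{C^{\fp\op}}\otimes_{C^\fp}H$, and the tensor product of functors is a bifunctor (this is visible directly from the coend formula defining it), so in particular it carries isomorphisms to isomorphisms. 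Hence $\theta$ induces an isomorphism $\overrightarrow{G|_{C^\fp}}\xrightarrow{\ \sim\ }\overrightarrow{F}$. Composing the two isomorphisms gives $G\cong\overrightarrow{G|_{C^\fp}}\cong\overrightarrow{F}$, which is exactly the uniqueness assertion.

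The only point that needs a sentence of justification, rather than being a pure quotation of earlier results, is the bifunctoriality of $-\otimes_{C^\fp}-$ and, in consequence, the fact that $\overrightarrow{(-)}$ sends an isomorphism of functors $C^\fp\to\Ab$ to an isomorphism of functors $C\to\Ab$; this is routine from the universal property of the coend. I do not expect any genuine obstacle here: the theorem is a formal corollary of \thref{blah} together with the properties of $\overrightarrow{F}$ already established, and the argument above is complete modulo that one remark about bifunctoriality.
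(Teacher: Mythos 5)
Your proposal is correct and matches the paper's own argument: the paper likewise takes existence as already established and proves uniqueness by applying \thref{blah} to any competing functor $F'$ to get $F'\cong\overrightarrow{F'|_{C^\fp}}\cong\overrightarrow{F}$. Your extra remark on the bifunctoriality of the coend, justifying the second isomorphism, is a point the paper leaves implicit but is exactly right.
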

\begin{proof}Suppose $F':C\to\Ab$ is any other such functor. Then since $F'$ preserves direct limits, by \thref{blah} there are isomorphisms
\begin{displaymath}
F'\cong \overrightarrow{F'|_{C^\fp}}\cong \overrightarrow{F},
\end{displaymath}
as required.
\end{proof}
\begin{lem}\thlabel{extex}Let $C$ be a locally finitely presented category. If 
\begin{displaymath}
\xymatrix{F'\ar[r]&F\ar[r]&F''}
\end{displaymath}
is an exact sequence in $C^\fp\Lmods$ then the induced sequence
\begin{displaymath}
\xymatrix{\overrightarrow{F'}\ar[r]&\overrightarrow{F}\ar[r]&\overrightarrow{F''}}
\end{displaymath}
is exact when evaluated at any object of $C$
\end{lem}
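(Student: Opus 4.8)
The plan is to reduce to the case of a finitely presented object $c\in C^\fp$, where the claim follows from the Yoneda-type isomorphism of \thref{tensyoneda}, and then bootstrap to an arbitrary $c\in C$ by writing $c$ as a direct limit of finitely presented objects and using that filtered colimits are exact in $\Ab$.

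First I would treat the case $c\in C^\fp$. By \thref{tensyoneda}, for each additive functor $H:C^\fp\to\Ab$ there is an isomorphism $\overrightarrow{H}c=C(-,c)|_{C^{\fp\op}}\otimes_{C^\fp}H\cong Hc$ which is natural in $H$ (and in $c$). Applying this with $H=F',F,F''$ and using naturality in $H$, the sequence $\overrightarrow{F'}c\to\overrightarrow{F}c\to\overrightarrow{F''}c$ is isomorphic, as a sequence, to $F'c\to Fc\to F''c$. Since $\Ab$ is abelian and all limits and colimits in the functor category $C^\fp\Lmods$ are computed pointwise, the hypothesis that $F'\to F\to F''$ is exact in $C^\fp\Lmods$ says exactly that $F'c\to Fc\to F''c$ is exact in $\Ab$ for every $c\in C^\fp$. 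Hence $\overrightarrow{F'}c\to\overrightarrow{F}c\to\overrightarrow{F''}c$ is exact for every finitely presented $c$. (Equivalently, $C(-,c)|_{C^{\fp\op}}\otimes_{C^\fp}-$ is, by \thref{tensyoneda}, naturally isomorphic to the evaluation functor at $c$, which is exact.)

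Now I would pass to an arbitrary $c\in C$. Since $C$ is locally finitely presented, write $c=\varinjlim_i c_i$ for a direct system $(c_i)$ with each $c_i\in C^\fp$. Each of $\overrightarrow{F'}$, $\overrightarrow{F}$, $\overrightarrow{F''}$ preserves direct limits, so $\overrightarrow{F'}c\to\overrightarrow{F}c\to\overrightarrow{F''}c$ is the direct limit over $i$ of the sequences $\overrightarrow{F'}c_i\to\overrightarrow{F}c_i\to\overrightarrow{F''}c_i$, each of which is exact by the previous paragraph. Direct limits are exact in $\Ab$, so a direct limit of exact sequences is exact, and the claim follows.

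The one point that needs care is that $F'\to F\to F''$ is assumed exact only in the middle, not right exact, so one cannot simply invoke right-exactness of the tensor product $C(-,c)|_{C^{\fp\op}}\otimes_{C^\fp}-$. The argument sidesteps this by reducing to evaluation at finitely presented objects, where the relevant tensor functor is genuinely exact, and then descending to general $c$ via exactness of filtered colimits rather than via right-exactness.
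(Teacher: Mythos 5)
Your proof is correct, but it takes a different route from the paper's. The paper's proof is a one-line appeal to the fact, cited from Crawley-Boevey (Section 1.4 of that paper), that for every object $c\in C$ the functor $C(-,c)|_{C^{\fp\op}}\in\Rmods C^\fp$ is \emph{flat}, so that $C(-,c)|_{C^{\fp\op}}\otimes_{C^\fp}-$ preserves exactness in the middle; combined with \thref{extdef} this gives the result immediately. You instead prove the needed exactness directly: for finitely presented $c$ you identify $C(-,c)|_{C^{\fp\op}}\otimes_{C^\fp}-$ with evaluation at $c$ via \thref{tensyoneda}, which is exact since exactness in $C^\fp\Lmods$ is pointwise, and for general $c$ you write $c$ as a filtered colimit of finitely presented objects, use that the functors $\overrightarrow{F'},\overrightarrow{F},\overrightarrow{F''}$ preserve direct limits (with the comparison maps compatible by naturality), and invoke exactness of filtered colimits in $\Ab$. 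This is in effect a self-contained proof of the flatness statement the paper cites (a filtered colimit of representables is flat), so your argument buys independence from the external reference at the cost of a slightly longer proof; the paper's version buys brevity by outsourcing exactly that point. Your closing remark correctly identifies the one genuine pitfall --- that mere right-exactness of the tensor product would not suffice for a sequence exact only in the middle --- and your reduction legitimately avoids it.
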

\begin{proof}
This follows directly from the \thref{extdef} and the fact that, for any object $c\in C$, the functor $C(-,c)|_{C^{\fp\op}}\in\Rmods C^\fp$ is flat \cite[Section 1.4]{crawleyboevey1994}.
\end{proof}
\subsection{Dualising a covariant representable functor}\label{isothms}
\begin{rmk}\thlabel{reflection}The functor $(-)^*:(K\Lmods)^\op\to K\Lmods$ preserves and reflects inverse limits. This can be seen by the fact that it is monadic (see \thref{monadic}), and all monadic functors preserve and reflect all limits (see \cite[VI.2, Exercises]{maclane1998} for this). In particular, this implies that a functor $F:R\Lmods\to\Ab$ preserves direct limits iff its dual $F^*:(R\Lmods)^\op\to\Ab$ preserves inverse limits.
\end{rmk}
\begin{lem}\thlabel{weirdiso}\cite[3.2.11]{enochsjenda2011} Let $M$ and $N$ be a left $R$-modules. There is a linear isomorphism
\begin{displaymath}
\sigma_{MN}:M^*\otimes N\to (N,M)^*
\end{displaymath}
such that $\sigma_{MN}(f\otimes n)(g)=f(g(n))$ for all $f\in M^*$ and $n\in N$, which is natural in $M$ and $N$. Also, $\sigma_{MN}$ is an isomorphism when $N$ is finitely presented.
\end{lem}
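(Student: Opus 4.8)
The plan is to exhibit $\sigma_{MN}$ explicitly, check that it is a well-defined natural $K$-linear map, and then prove it is an isomorphism when $N$ is finitely presented by reduction along a finite presentation of $N$.

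First I would define, for $f\in M^*$ and $n\in N$, the element $\sigma_{MN}(f\otimes n)\in (N,M)^*$ by $g\mapsto f(g(n))$. To see that this descends to $M^*\otimes_R N$, note that $(f,n)\mapsto[g\mapsto f(g(n))]$ is additive in each variable and $R$-balanced: for $r\in R$ one has $(fr)(g(n))=f\bigl(r\cdot g(n)\bigr)=f\bigl(g(rn)\bigr)$ since $g$ is $R$-linear, so $\sigma_{MN}(fr\otimes n)=\sigma_{MN}(f\otimes rn)$; and it is $K$-linear because $K$ acts centrally. Naturality in $M$ and $N$ is a routine diagram chase, since a morphism $M\to M'$ induces compatible maps on $M^*$ and on $(N,M)$, a morphism $N\to N'$ likewise, and in each case the defining formula $f(g(n))$ is visibly compatible.

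For the isomorphism claim I would argue in three steps. When $N=R$, both sides are canonically isomorphic to $M^*$, via $M^*\otimes_R R\cong M^*$ and $(R,M)\cong M$ (evaluation at $1$), and unwinding these identifications shows $\sigma_{MR}$ corresponds to the identity of $M^*$; this bookkeeping is the one point requiring a little care. By additivity of both functors $M^*\otimes_R(-)$ and $(-,M)^*$ and naturality of $\sigma$, it follows that $\sigma_{MN}$ is an isomorphism whenever $N$ is finitely generated free. Finally, for arbitrary finitely presented $N$, pick a presentation $R^m\to R^n\to N\to 0$. The functor $M^*\otimes_R(-)$ is right exact, and $(-,M)^*=\Hom_K\bigl((-,M),J\bigr)$ is right exact as well, being the composite of the left-exact contravariant $\Hom_R(-,M)$ with the exact contravariant $\Hom_K(-,J)$ — exact because $J$ is an injective $K$-module. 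Applying both functors to the presentation and using naturality of $\sigma$ yields a commutative ladder with exact rows whose two left-hand vertical maps $\sigma_{M,R^m}$ and $\sigma_{M,R^n}$ are isomorphisms, so the five lemma (really just the comparison of cokernels) forces $\sigma_{MN}$ to be an isomorphism.

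No serious obstacle is expected: the argument is standard dévissage, the only slightly delicate point being the verification that the $N=R$ case yields exactly the identity under the two canonical identifications, after which right-exactness of $(-,M)^*$ — which rests on the injectivity of the cogenerator $J$ — and the five lemma finish the job. (This is \cite[3.2.11]{enochsjenda2011}, and the sketch above is in substance the proof given there.)
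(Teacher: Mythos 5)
Your proof is correct: the explicit formula, the $R$-balancing and naturality checks, the $N=R$ case, and the reduction along a finite presentation $R^m\to R^n\to N\to 0$ using right exactness of $M^*\otimes_R(-)$ and of $(-,M)^*$ (the latter because $\Hom_K(-,J)$ is exact, $J$ being an injective cogenerator) are all sound. The paper offers no argument of its own here—it simply cites \cite[3.2.11]{enochsjenda2011}—and your d\'evissage is in substance the standard proof of that cited result, so there is nothing to add.
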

\begin{cor}\thlabel{nisomorphism}For any finitely presented left $R$-module $N$, $(N,-)^*\cong (-)^*\otimes N$ as functors $(R\Lmods)^\op\to\Ab$.
\end{cor}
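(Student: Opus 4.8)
The plan is to read this off directly from \thref{weirdiso} by fixing $N$ and letting $M$ vary. First I would record the variances. The dual of the covariant functor $(N,-)\colon R\Lmods\to\Ab$ is a functor $(N,-)^*\colon(R\Lmods)^\op\to\Ab$ sending $M$ to $(N,M)^*$ and a morphism $f\colon M\to M'$ to $(N,f)^*\colon(N,M')^*\to(N,M)^*$. On the other side, dualization $M\mapsto M^*$ is contravariant from $R\Lmods$ to $\Rmods R$ and $-\otimes_R N\colon\Rmods R\to\Ab$ is covariant, so their composite $M\mapsto M^*\otimes_R N$ is again a functor $(R\Lmods)^\op\to\Ab$, namely $(-)^*\otimes N$. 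Hence the two sides of the asserted isomorphism are functors with the same source and target.

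Next, \thref{weirdiso} provides for each left $R$-module $M$ a linear map $\sigma_{MN}\colon M^*\otimes N\to(N,M)^*$, natural in $M$. Naturality in $M$ says precisely that the family $(\sigma_{MN})_M$ constitutes a natural transformation $(-)^*\otimes N\Rightarrow(N,-)^*$ of functors $(R\Lmods)^\op\to\Ab$. Since $N$ is assumed finitely presented, the same lemma guarantees that every component $\sigma_{MN}$ is an isomorphism. A natural transformation all of whose components are isomorphisms is a natural isomorphism, and this is exactly the statement.

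I do not expect any real obstacle here; the only point that needs a moment's care is checking that the naturality square for $\sigma$ in the variable $M$ recorded in \thref{weirdiso} is the commuting square appropriate to a natural transformation between these contravariant functors on $R\Lmods$ (equivalently, covariant functors on $(R\Lmods)^\op$) rather than for covariant functors on $R\Lmods$. Given the explicit formula $\sigma_{MN}(f\otimes n)(g)=f(g(n))$ this is a one-line verification, and it is already subsumed in the naturality assertion of \thref{weirdiso}. In short, the corollary is \thref{weirdiso} restated with $N$ held fixed and finitely presented.
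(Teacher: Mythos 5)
Your proof is correct and matches the paper's intent exactly: the corollary is stated without a separate proof precisely because it is the content of \thref{weirdiso} with $N$ fixed and finitely presented, the naturality in $M$ assembling the components $\sigma_{MN}$ into a natural isomorphism $(-)^*\otimes N\cong(N,-)^*$ of functors on $(R\Lmods)^\op$. Your remark about checking the variance of the naturality square is the only point of care, and you handle it correctly.
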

\begin{cor}\thlabel{corweird}If a left $R$-module $N$ is finitely presented then the functor $(-)^*\otimes N:(R\Lmods)^\op\to\Ab$ preserves inverse limits and is finitely presented when restricted to $(R\lmods)^\op$.
\end{cor}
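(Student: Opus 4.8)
The plan is to read off both properties from \thref{nisomorphism}, which gives a natural isomorphism $(-)^*\otimes N\cong(N,-)^*$ of functors $(R\Lmods)^\op\to\Ab$. Since $N$ is finitely presented, $(N,-):R\Lmods\to\Ab$ preserves direct limits, so by \thref{reflection} its dual $(N,-)^*$ preserves inverse limits; hence so does $(-)^*\otimes N$. It is important to obtain the inverse-limit statement this way rather than from a presentation of $N$, because exact sequences of functors need not interact well with inverse limits (that issue is exactly the content of the later \thref{dimpb}), whereas \thref{reflection} is available now.

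For the finitely-presented statement I would mimic the proof of \thref{tix}. First treat $N=R$: here $(-)^*\otimes R\cong(-)^*$, and \thref{homtens}, applied with the right $R$-module taken to be $R_R$, yields a natural isomorphism $(-)^*\cong(-,Q)$ of functors $(R\Lmods)^\op\to\Ab$, where $Q=(R_R)^*$. Because $R$ is an artin algebra, $R_R$ is finitely presented and therefore so is $Q$, i.e.\ $Q\in R\lmods$. More generally, using $(-)^*\otimes R^k\cong\bigl((-)^*\bigr)^{\oplus k}$ one gets $(-)^*\otimes R^k\cong(-,Q^{\oplus k})$ with $Q^{\oplus k}\in R\lmods$, so $\bigl((-)^*\otimes R^k\bigr)|_{(R\lmods)^\op}\cong(R\lmods)(-,Q^{\oplus k})$ is representable and hence finitely presented.

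To pass to a general finitely presented $N$, choose a presentation $R^m\to R^n\to N\to0$ in $R\Lmods$. For every left $R$-module $M$ the functor $M^*\otimes_R-$ is right exact, so the induced sequence of natural transformations $(-)^*\otimes R^m\to(-)^*\otimes R^n\to(-)^*\otimes N\to0$ is exact when evaluated at any left $R$-module, in particular at any object of $R\lmods$. Thus $\bigl((-)^*\otimes N\bigr)|_{(R\lmods)^\op}$ is the cokernel, in $(R\lmods)^\op\Lmods$, of a morphism between the two finitely presented functors of the previous paragraph, and the finitely presented functors are closed under cokernels (\thref{auslander}(b) applied to the abelian category $(R\lmods)^\op\Lmods$ with $P$ the class of representables, or equivalently \thref{noetheriancor}); hence $\bigl((-)^*\otimes N\bigr)|_{(R\lmods)^\op}$ is finitely presented. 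The only step that is not pure formalism is the identification $(-)^*\cong(-,Q)$, which is immediate from \thref{homtens}; everything else is bookkeeping parallel to \thref{tix}.
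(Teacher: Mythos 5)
Your argument is correct and follows essentially the same route as the paper: both parts use the isomorphism $(-)^*\otimes N\cong(N,-)^*$ together with \thref{reflection} for inverse limits, and then a presentation $R^m\to R^n\to N\to 0$, right-exactness of the tensor product, closure of finitely presented functors under cokernels, and the identification $(-)^*\otimes R\cong(-,R^*)$ via \thref{homtens} for the finite presentation claim.
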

\begin{proof}Since $(-)^*\otimes N\cong (N,-)^*$, it follows from \thref{reflection} that it preserves inverse limits. Therefore we need only show that it is finitely presented when restricted to $(R\lmods)^\op$.

Choose a right exact sequence $R^m\to R^n\to N\to 0$ where $m$ and $n$ are positive integers. Since $\rmods(R\lmods)$ is closed under cokernels by \thref{covarabelian}, and by the fact that there is a right exact sequence $(-)^*\otimes R^m\to (-)^*\otimes R^n\to (-)^*\otimes N\to 0$, we need only show that $(-)^*\otimes R$ is finitely presented. Indeed, for each $M\in R\Lmods$, we have isomorphisms $M^*\otimes R\cong M^*\cong (R,M^*)\cong(M,R^*)$ which are natural in $M$ by \thref{homtens}, so $(-)^*\otimes R\cong (-,R^*)$, which is finitely presented since $R^*$ is a finitely presented left $R$-module.
\end{proof}
\subsection{Duality of finitely presented functors}\label{dualfun}
We have the usual duality of finitely presented modules $(-)^*:R\lmods\simeq(\rmods R)^\op$. This section looks at how it interacts with dualities of functors.

For any functor $F:R\lmods\to\Ab$, we define $dF:\rmods R\to\Ab$ to be the functor given by
\begin{displaymath}
(dF)M=(F,M\otimes -)
\end{displaymath}
for every $M\in\rmods R$. Thus we have a functor $d:((R\lmods)\Lmods)^\op\to(\rmods R)\Lmods$. This is discussed in \cite[10.3]{prest2009}. In particular, if $F:R\lmods\to\Ab$ is finitely presented, then so is $dF$. Furthermore, there is, for any $F:R\lmods\to\Ab$, a canonical natural transformation $F\to d^2F$ which is an isomorphism when $F$ is finitely presented. From this comes an equivalence $((R\lmods)\lmods)^\op\simeq(\rmods R)\lmods$.

\begin{prop}\thlabel{propaimpb}If $F:R\lmods\to\Ab$ is finitely presented then its dual $F^*:(R\lmods)^\op\to\Ab$ is also finitely presented.
\end{prop}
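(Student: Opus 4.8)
The plan is to dualise a finite presentation of $F$ and recognise the result as a kernel inside the abelian category $\rmods(R\lmods)$. The three ingredients are: every finitely presented $F:R\lmods\to\Ab$ is a cokernel of a morphism of representables; the $K$-dual $(-)^*=\Hom_K(-,J)$ is exact (since $J$ is an injective cogenerator of $K\Lmods$); and $\rmods(R\lmods)$ is an abelian subcategory of $\Rmods(R\lmods)$ by \thref{noetheriancor} (applicable because artin algebras are coherent).

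First I would fix, using \thref{fpfunc1} (note that $R\lmods$ is abelian, hence has weak cokernels), an exact sequence
\begin{displaymath}
(B,-)\to(A,-)\to F\to 0
\end{displaymath}
in $(R\lmods)\Lmods$ with $A,B\in R\lmods$; by the Yoneda lemma the left-hand map is $(g,-)$ for some $g:A\to B$ in $R\lmods$. Applying $(-)^*$ at each $M\in R\lmods$ and using exactness of the $K$-dual gives exact sequences $0\to F^*M\to (A,M)^*\to (B,M)^*$, which assemble (by functoriality of $(-)^*$) into an exact sequence
\begin{displaymath}
0\to F^*\to (A,-)^*\to (B,-)^*
\end{displaymath}
in the functor category $\Rmods(R\lmods)=(R\lmods)^\op\Lmods$; equivalently, $F^*$ is the kernel there of $(g,-)^*:(A,-)^*\to (B,-)^*$.

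Next I would identify the two outer functors. By \thref{nisomorphism} we have $(A,-)^*\cong (-)^*\otimes A$ and $(B,-)^*\cong (-)^*\otimes B$, and by \thref{corweird} each of these is finitely presented when restricted to $(R\lmods)^\op$, since $A$ and $B$ are finitely presented. Thus $(g,-)^*$ is a morphism in $\rmods(R\lmods)$. Finally, since $\rmods(R\lmods)$ is an abelian subcategory of $\Rmods(R\lmods)$ by \thref{noetheriancor}, the kernel of $(g,-)^*$ computed in the ambient functor category $\Rmods(R\lmods)$ lies in $\rmods(R\lmods)$; that is, $F^*$ is finitely presented.

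The conceptual heart of the argument -- and essentially the only place where anything nonformal happens -- is the appeal to coherence of $R$ via \thref{noetheriancor}: a pointwise kernel of finitely presented functors is not finitely presented in general, and it is precisely the abelian-subcategory property that rescues us. The remaining steps are routine, the one point requiring a little care being to check that the dualised maps are genuine natural transformations, so that the pointwise exact sequence is an exact sequence of functors; this follows at once from functoriality of $(-)^*$.
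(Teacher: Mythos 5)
Your proof is correct and follows essentially the same route as the paper's: dualise a presentation $(B,-)\to(A,-)\to F\to 0$, identify $(A,-)^*$ and $(B,-)^*$ as finitely presented via \thref{nisomorphism} and \thref{corweird}, and conclude because the finitely presented contravariant functors form an abelian (hence kernel-closed) subcategory. If anything, your citation of \thref{noetheriancor} for $\rmods(R\lmods)$ being closed under kernels is the more apt reference, since $F^*$ is a functor on $(R\lmods)^\op$.
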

\begin{proof}
Find a morphism $f:A\to B$, with $A$ and $B$ finitely presented, such that $F$ is the cokernel of the induced map $(f,-):(B,-)\to(A,-)$. Then $F^*$ is the kernel of the induced map $(f,-)^*:(A,-)^*\to(B,-)^*$. But, by \thref{nisomorphism} and \thref{corweird}, $(B,-)^*$ and $(A,-)^*$ are finitely presented, and so, since $(R\lmods)\lmods$ is closed under kernels by \thref{covarabelian}, $F^*$ is finitely presented.
\end{proof}
\begin{rmk}\thlabel{aimpb2}\thref{propaimpb} and \thref{reflection} prove the \textbf{(a) implies (b)} part of \thref{contravarconditions}.
\end{rmk}
\begin{lem}\thlabel{fptens}\cite[6.1]{auslander1965} For any right $R$-module $N$, the the functor $N\otimes-:R\lmods\to\Ab$ is finitely presented iff $N$ is finitely presented.
\end{lem}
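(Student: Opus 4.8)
The plan is to prove the two implications separately. The key simplification available here is that over an artin algebra $R$ a (left or right) module is finitely presented precisely when it is finitely generated, because $R$ is two-sided noetherian; this turns the harder implication into a finiteness statement about a single module.

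For the implication ``$N$ finitely presented $\Rightarrow N\otimes_R-$ finitely presented'', I would start from a finite presentation $R^m\to R^n\to N\to 0$ of $N$ as a right $R$-module. Since $-\otimes_R M$ is right exact for each $M\in R\lmods$, and since there is a natural isomorphism $R^k\otimes_R M\cong(R^k,M)$ (from $R\otimes_R-\cong(R,-)$ and additivity), applying $-\otimes_R(-)$ yields an exact sequence of functors $(R^m,-)\to(R^n,-)\to N\otimes_R-\to 0$ on $R\lmods$. Since $R^m,R^n\in R\lmods$, this exhibits $N\otimes_R-$ as the cokernel of a morphism of representable functors on $R\lmods$, hence finitely presented by \thref{fpfunc1} applied to the category $R\lmods$.

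For the converse, suppose $N\otimes_R-|_{R\lmods}$ is finitely presented. By \thref{fpfunc1} there is a presentation $(Q,-)\to(P,-)\to N\otimes_R-\to 0$ with $P,Q\in R\lmods$; in particular there is an epimorphism of functors $\alpha\colon(P,-)\to N\otimes_R-$ on $R\lmods$. Epimorphisms in a functor category valued in $\Ab$ are pointwise epimorphisms, and a natural transformation between additive functors $R\lmods\to\Ab$ evaluated at the object ${}_RR$ is automatically a homomorphism of right $R$-modules (it commutes with the $\End_R({}_RR)$-action). So evaluating $\alpha$ at ${}_RR\in R\lmods$ gives a surjection of right $R$-modules $\Hom_R(P,R)\to N\otimes_R R\cong N$. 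Now $P$ is a finitely generated left $R$-module and $R$ is right noetherian, so $\Hom_R(P,R)$ embeds into a finitely generated free right module and is therefore finitely generated; hence so is its quotient $N$, and so, $R$ being right noetherian, $N$ is finitely presented.

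I do not expect a genuine obstacle. The only points requiring care are bookkeeping: matching the right $R$-module structures that evaluation at ${}_RR$ places on $\Hom_R(P,R)$ and on $N\otimes_R R$ with the standard ones, and keeping variances straight in the identification $R^k\otimes_R-\cong(R^k,-)$. If one wanted to avoid even the noetherian shortcut and work over an arbitrary coherent ring, one could instead use that $(R\lmods)\lmods$ is abelian (\thref{covarabelian}), so the kernel of $\alpha$ is finitely presented; evaluating the resulting short exact sequence at ${}_RR$ then presents $N$ directly. This is essentially the content of the cited result of Auslander on coherent functors.
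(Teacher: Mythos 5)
Your proof is correct. The paper gives no argument of its own here: it simply cites Auslander's \emph{Coherent functors} (Proposition 6.1), so any complete proof is already ``more'' than what the paper records. Your forward direction ($N$ finitely presented $\Rightarrow N\otimes_R-$ finitely presented) is the standard one and works over any ring. Your converse is a genuine shortcut made possible by the standing hypothesis that $R$ is an artin algebra: you only use the epimorphism $(P,-)\to N\otimes_R-$, evaluate at ${}_RR$ to get a surjection $\Hom_R(P,R)\to N$ of right $R$-modules, and then invoke right noetherianness twice (once so that $\Hom_R(P,R)\subseteq R^n$ is finitely generated, once to upgrade ``finitely generated'' to ``finitely presented''). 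Auslander's cited result holds over an arbitrary ring, where neither of these steps is available --- there one must use the full presentation $(Q,-)\to(P,-)\to N\otimes_R-\to 0$, typically via the Yoneda-type identification of natural transformations $(P,-)\to N\otimes_R-$ with elements of $N\otimes_R P$, to extract a finite presentation of $N$. So your argument buys brevity and self-containment at the cost of generality; since the lemma is only ever applied with $R$ an artin algebra, that cost is nil in this paper. The identifications you flag as ``bookkeeping'' (the right $R$-module structures on $(P,R)$ and $N\otimes_R R$ coming from the $\End_R({}_RR)$-action, and pointwise exactness in the functor category) all check out.
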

\begin{prop}
If $G:(R\lmods)^\op\to\Ab$ is finitely presented then its dual $G^*:R\lmods\to\Ab$ is also finitely presented.
\end{prop}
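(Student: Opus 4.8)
The plan is to run the mirror image of the proof of \thref{propaimpb}, interchanging the roles of hom and tensor and using \thref{weirdiso} in place of \thref{nisomorphism} and \thref{corweird}.

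First I would write down a presentation of $G$. Since $R\lmods$ is skeletally small, \thref{fpfunc1} applied to the small additive category $(R\lmods)^\op$ shows that a finitely presented functor $(R\lmods)^\op\to\Ab$ is exactly one that fits into an exact sequence $(-,A)\to(-,B)\to G\to 0$ for some $A,B\in R\lmods$, where the representable functors on $(R\lmods)^\op$ are precisely the contravariant hom-functors $(-,A)$ with $A\in R\lmods$. Next I would dualise. The functor-level dual $(-)^*$ is exact, because $J$ is an injective cogenerator of $K\Lmods$ so that $(-)^*:K\Lmods\to K\Lmods$ is exact, and exactness of a sequence of $\Ab$-valued functors is checked objectwise. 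Hence dualising the presentation gives an exact sequence $0\to G^*\to(-,B)^*\to(-,A)^*$ of functors $R\lmods\to\Ab$; in particular $G^*$ is the kernel of the induced morphism $(-,B)^*\to(-,A)^*$.

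It then remains to see that the two outer functors are finitely presented. By \thref{weirdiso} the natural map $\sigma_{AM}:A^*\otimes M\to(M,A)^*$ is an isomorphism whenever $M$ is finitely presented, so on $R\lmods$ we have $(-,A)^*\cong A^*\otimes-$ and likewise $(-,B)^*\cong B^*\otimes-$. As $R$ is an artin algebra and $A,B$ are finitely presented, $A^*$ and $B^*$ are finitely presented right $R$-modules, so \thref{fptens} shows that $A^*\otimes-$ and $B^*\otimes-$ are finitely presented functors on $R\lmods$. Finally, $(R\lmods)\lmods$ is an abelian subcategory of $(R\lmods)\Lmods$ by \thref{covarabelian}, hence closed under kernels, so $G^*$, being the kernel of a morphism between finitely presented functors, is finitely presented.

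I do not expect a real obstacle here; the only points requiring a little care are the direction and naturality bookkeeping when dualising the presentation of $G$ and when invoking \thref{weirdiso} (whose finitely presented variable sits in the ``$N$'' slot, which is the right place after dualisation). Both are routine given the machinery already assembled.
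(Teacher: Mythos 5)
Your proposal is correct and follows essentially the same route as the paper: present $G$ by representables, dualise (using exactness of $(-)^*$) to exhibit $G^*$ as a kernel, identify $(-,A)^*\cong A^*\otimes-$ via \thref{weirdiso}, invoke \thref{fptens}, and conclude by closure of the finitely presented covariant functors under kernels. If anything your citation of \thref{covarabelian} for the kernel-closure step is the more accurate one, since $G^*$ lives in $(R\lmods)\Lmods$ rather than $\Rmods(R\lmods)$.
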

\begin{proof}Find a morphism $f:A\to B$, with $A$ and $B$ finitely presented, such that $G$ is the cokernel of the induced map $(-,f):(-,A)\to(-,B)$. Then $G^*$ is the kernel of the induced map $(-,f)^*:(-,B)^*\to(-,A)^*$. But, by \thref{weirdiso} and \thref{fptens}, $(-,B)^*\cong B^*\otimes-$ and $(-,A)^*\cong A^*\otimes -$ are finitely presented, and so, since $\rmods(R\lmods)$ is closed under kernels by \thref{noetheriancor}, $G^*$ is finitely presented.
\end{proof}
\begin{cor}\thlabel{equiv}For any finitely presented functor $F:R\lmods\to\Ab$, the canonical natural transformation $F\to F^{**}$ is an isomorphism.

For any finitely presented $G:(R\lmods)^\op\to\Ab$, the canonical natural transformation $G\to G^{**}$ is an isomorphism.

Therefore, there is an equivalence of categories $(-)^*:((R\lmods)\lmods)^\op\to\rmods(R\lmods)$ with pseudo-inverse $(-)^*:\rmods(R\lmods)\to ((R\lmods)\lmods)^\op$.
\end{cor}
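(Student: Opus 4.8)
The plan is to reduce the two assertions about the canonical map being an isomorphism to a single pointwise observation, and then to deduce the equivalence formally. The observation is that a finitely presented functor takes values in finitely generated $K$-modules: if $F:R\lmods\to\Ab$ is finitely presented then by \thref{fpfunc1} there is an exact sequence $(B,-)\to(A,-)\to F\to 0$ with $A,B\in R\lmods$, so for each $M\in R\lmods$ the group $F(M)$ is a quotient of $\Hom_R(A,M)$; since $A$ and $M$ are finitely generated over $R$, hence over the commutative artinian (in particular Noetherian) ring $K$, the $K$-module $\Hom_K(A,M)$ is finitely generated, and therefore so are its submodule $\Hom_R(A,M)$ and the quotient $F(M)$.

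Granting this, the first statement is immediate: by the classical double-duality for finitely generated modules over a commutative artinian ring (recalled above, from \cite[II.3]{ars}), the canonical map $\eta_{F(M)}:F(M)\to F(M)^{**}$ is an isomorphism for every $M$, and $\eta_{F(M)}$ is precisely the component at $M$ of the canonical natural transformation $F\to F^{**}$, so that transformation is an isomorphism. The second statement is proved identically: by \thref{fpfunc1} applied to the additive category $(R\lmods)^\op$, a finitely presented $G:(R\lmods)^\op\to\Ab$ fits into a presentation $(-,A)\to(-,B)\to G\to 0$, so $G(M)$ is a quotient of $\Hom_R(M,B)$, hence a finitely generated $K$-module, and therefore $\eta_{G(M)}$ is an isomorphism for every $M\in R\lmods$ and $G\to G^{**}$ is an isomorphism.

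Finally, for the equivalence I would combine these with the two dualisation propositions. By \thref{propaimpb}, $(-)^*$ restricts to a functor $((R\lmods)\lmods)^\op\to\rmods(R\lmods)$, and by the proposition immediately preceding the corollary it restricts to a functor $\rmods(R\lmods)\to((R\lmods)\lmods)^\op$; either composite of the two resulting functors is the double-dual functor on the relevant category, and by the first two statements the canonical natural transformation $\mathrm{id}\Rightarrow(-)^{**}$ --- which is also natural in the functor being dualised --- is a natural isomorphism there. Hence the two functors are mutually quasi-inverse equivalences. The only step that takes any thought is the opening observation that finitely presented functors are valued in finitely generated $K$-modules; once that is in place the argument is just classical $K$-duality together with the bookkeeping supplied by \thref{propaimpb} and its companion proposition, which are where the substantive work was already carried out.
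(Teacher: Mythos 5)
Your proof is correct and follows essentially the same route as the paper: the paper's own argument is precisely the observation that a finitely presented functor takes finitely generated $K$-module values, so the canonical map to the double dual is a componentwise isomorphism by classical $K$-duality, with the equivalence then following from \thref{propaimpb} and its companion. You have merely filled in the details (the presentation $(B,-)\to(A,-)\to F\to 0$ and the Noetherian argument) that the paper leaves implicit.
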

\begin{proof}
Since $F$ is finitely presented, for any finitely generated left $R$-module $M$, $FM$ is a finitely generated $K$-module, and hence $FM\to(FM)^{**}$ is an isomorphism. The proof for $G$ is similar.
\end{proof}

\begin{rmk}The proof that the operation $d$ gives a duality of finitely presented functors (see \cite[10.3]{prest2009} for a proof) relies on the fact that the injectives in $(R\lmods)\lmods$ are precisely the functors of the form $M\otimes -$ for a finitely presented right $R$-module $M$. This is true for any ring $R$, but it is easier to prove if $R$ is an artin algebra since we can use \thref{homtens} and \thref{equiv}.
\end{rmk}

\begin{prop}\thlabel{eqdual}For any finitely presented functor $F:R\lmods\to\Ab$, there is an isomorphism $F^*\cong (dF)_*$ which is natural in $F$.
\end{prop}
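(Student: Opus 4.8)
The plan is to unwind both sides into a chain of established natural isomorphisms. First I would note that, by the definition of the predual, $(dF)_*$ is the functor $(R\lmods)^\op\to\Ab$ sending $N$ to $(dF)(N^*)=(F,N^*\otimes-)$, the group of natural transformations (in the functor category $(R\lmods)\Lmods$) from $F$ to the functor $N^*\otimes-:R\lmods\to\Ab$. So it suffices to produce, naturally in $N\in R\lmods$, an isomorphism $(F,N^*\otimes-)\cong(FN)^*=F^*N$.

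The key move is to rewrite $N^*\otimes-$ by means of \thref{weirdiso}: the natural transformation $\sigma_{N,-}\colon N^*\otimes-\to(-,N)^*$ is an isomorphism on $R\lmods$ (every object there is finitely presented) and is natural in the fixed argument $N$, so $(F,N^*\otimes-)\cong(F,(-,N)^*)$. Now $(-,N)$ is the representable object of $\Rmods(R\lmods)$ at $N$ and $(-,N)^*$ is its dual, so I can apply the hom-tensor adjunction for functor categories \thref{homtenspreadd} over the $K$-linear skeletally small category $A=R\lmods$, taking $F$ in the role of $M$ and $(-,N)$ in the role of $N$, to get $(F,(-,N)^*)\cong((-,N)\otimes_{R\lmods}F)^*$. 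Finally \thref{tensyoneda} identifies $(-,N)\otimes_{R\lmods}F$ with $FN$, so the right-hand side is $(FN)^*=F^*N$. Composing, $(dF)_*N\cong F^*N$; naturality in $F$ is automatic, since the adjunction isomorphism of \thref{homtenspreadd} and the Yoneda isomorphism of \thref{tensyoneda} are natural in the functor-variable and dualization is functorial.

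I expect no deep obstacle here: the only real work is the bookkeeping of variances — tracking which of $R\lmods$, $(R\lmods)^\op$, $\Rmods(R\lmods)$ each functor lives over, and verifying that $\sigma_{N,-}$ is genuinely natural in $N$ so that the first isomorphism assembles into a natural transformation of functors on $(R\lmods)^\op$. (Curiously, this argument never uses that $F$ is finitely presented, though it is that case which is needed in the sequel.) As an alternative more in the spirit of a presentation argument, one can choose an exact sequence $(B,-)\to(A,-)\to F\to0$ with $A,B\in R\lmods$: applying $d$ (which is left exact and contravariant) exhibits $dF$ as the kernel of $(-)\otimes_R A\to(-)\otimes_R B$, applying $(-)^*$ — which is exact because $J$ is an injective cogenerator — exhibits $F^*N$ as the kernel of $(A,N)^*\to(B,N)^*$, and \thref{weirdiso} matches these two kernels term by term; the main point to check is then the compatibility of the connecting maps, which follows from the naturality of $\sigma$.
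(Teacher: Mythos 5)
Your main argument is essentially the paper's own proof: both unwind $(dF)_*M=(F,M^*\otimes-)$, use \thref{weirdiso} to replace $M^*\otimes-$ by $(-,M)^*$, and then apply \thref{homtenspreadd} together with a Yoneda-type identification to land on $F^*M$ — you merely route through the middle term $((-,M)\otimes_{R\lmods}F)^*$ and \thref{tensyoneda} where the paper uses $((-,M),F^*)$ and ordinary Yoneda. Your parenthetical observation that finite presentability of $F$ is not actually used in this chain is correct (the paper's proof does not use it either).
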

\begin{proof}
Note that, for any functors $F:R\lmods\to\Ab$ and $G:(R\lmods)^\op\to \Ab$, there is an isomorphism $(F,G^*)\cong (G,F^*)$ which is natural in $F$ and $G$ by \thref{homtenspreadd}. It follows from this and \thref{weirdiso} that there are isomorphisms
\begin{displaymath}
(dF)_*(M)=(dF)(M^*)=(F,M^*\otimes -)\cong (F,(-,M)^*)\cong ((-,M),F^*)\cong F^*M
\end{displaymath}
which are natural in $F$ and $M$.
\end{proof}
\subsection{Extending contravariant functors along direct limits}\label{contraext}
Any functor $R\lmods\to\Ab$ can be uniquely extended to a direct limit preserving functor $R\Lmods\to\Ab$ by \thref{genunqext}. Here we give a contravariant analogue of this result.

For any functor $G:(R\lmods)^\op\to\Ab$, write $\overleftarrow{G}:(R\Lmods)^\op\to\Ab$ for the functor given by
\begin{displaymath}
\overleftarrow{G}M=((-,M),G)
\end{displaymath}
at any left $R$-module $M$ (the representable $(-,M)$ is to be read as a functor on $(R\lmods)^\op$). Also, for any natural transformation $\alpha:G\to G'$ in $\rmods(R\lmods)$, write $\overleftarrow\alpha:\overleftarrow{G}\to\overleftarrow{G'}$ for the induced natural transformation. By the Yoneda lemma, there is, for any $G\in\Rmods(R\lmods)$, an isomorphism $\overleftarrow{G}|_{(R\lmods)^\op}\cong G$ which is natural in $G$.
\begin{lem}\thlabel{xyz}For any additive functor $G:(R\lmods)^\op\to\Ab$, $\overleftarrow{G}:(R\Lmods)^\op\to\Ab$ preserves inverse limits.
\end{lem}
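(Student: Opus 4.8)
The plan is to realise $\overleftarrow{G}$ as a composition of two functors, each of which is easily seen to preserve inverse limits, exactly as in the proof that $\overrightarrow{F}$ preserves direct limits. First I would observe that $\overleftarrow{G}$ factors as
\begin{displaymath}
\xymatrix{(R\Lmods)^\op\ar[rr]^{\Psi}&&\Rmods(R\lmods)\ar[rr]^{((-),G)}&&\Ab,}
\end{displaymath}
where $\Psi$ sends a left $R$-module $M$ to the functor $(-,M)|_{(R\lmods)^\op}\in\Rmods(R\lmods)$ (i.e.\ $\Psi M = (R\lmods)(-,M)$ viewed as an object of $A^\op\Lmods$ for $A = R\lmods$), and the second functor is the representable (hom) functor $\Rmods(R\lmods)(-,G):(\Rmods(R\lmods))\to\Ab$. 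The composite of these two is precisely $M\mapsto ((-,M),G) = \overleftarrow{G}M$ by definition.

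Next I would check that each factor preserves inverse limits. The hom functor $\Rmods(R\lmods)(-,G)=((-),G)$ preserves all limits in its covariant argument; as a functor $(\Rmods(R\lmods))^\op\to\Ab$ it sends colimits in $\Rmods(R\lmods)$ to limits in $\Ab$, i.e.\ it preserves inverse limits — this is the standard fact that representable functors preserve limits, applied in the first variable. For $\Psi$, I would argue that it preserves inverse limits because the Yoneda-type embedding $M\mapsto R\Lmods(-,M)$ from $(R\Lmods)^\op$ into functors on $(R\Lmods)$ preserves inverse limits (limits of module-valued representables are computed pointwise, and $\Hom_R(X,-)$ preserves inverse limits for each module $X$), and restriction to the subcategory $R\lmods$ is just precomposition, which is continuous. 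More directly: for a fixed finitely presented $X\in R\lmods$, the evaluation $M\mapsto (X,M)=\Hom_R(X,M)$ sends inverse limits of left $R$-modules to inverse limits of abelian groups, since $\Hom_R(X,-)$ is a right adjoint (or simply left exact and limit-preserving); and inverse limits in $\Rmods(R\lmods)$ are computed objectwise, so $\Psi$ preserves them.

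Putting these together, $\overleftarrow{G}$ is a composition of inverse-limit-preserving functors and hence preserves inverse limits. I expect the only mild subtlety to be bookkeeping about variance: one must be careful that $\Psi:(R\Lmods)^\op\to\Rmods(R\lmods)$ genuinely sends inverse limits (i.e.\ limits taken in $R\Lmods$, which are colimits in $(R\Lmods)^\op$) to limits in $\Rmods(R\lmods)$, and that the representable functor $((-),G)$ turns those limits into limits in $\Ab$ rather than the wrong way round. Once the variances are lined up correctly, the argument is formal and parallels the proof given earlier that $\overrightarrow{F}$ preserves direct limits. No substantial obstacle is anticipated; this is a routine "composition of continuous functors" lemma.
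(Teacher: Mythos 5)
Your decomposition of $\overleftarrow{G}$ as the restricted Yoneda embedding $M\mapsto (-,M)|_{(R\lmods)^\op}$ followed by the representable functor $\Hom(-,G)$ is exactly the paper's, but the continuity argument you give for the first factor invokes the wrong fact, and the error is not just bookkeeping. In this paper ``$\overleftarrow{G}$ preserves inverse limits'' means: for a \emph{direct} limit $\varinjlim M_i$ in $R\Lmods$ (which is an inverse limit in $(R\Lmods)^\op$), the canonical map $\overleftarrow{G}(\varinjlim M_i)\to\varprojlim\overleftarrow{G}(M_i)$ is an isomorphism; compare \thref{reflection} and the way this lemma is used in \thref{extofrest} (``every left $R$-module can be written as a direct limit, i.e.\ an inverse limit in $(R\Lmods)^\op$, of finitely presented modules''). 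You instead read ``inverse limits'' as limits computed in $R\Lmods$, and accordingly justify the continuity of $\Psi$ by the fact that each $\Hom_R(X,-)$ preserves limits because it is a right adjoint. That fact holds for \emph{every} module $X$ and makes no use of $X$ being finitely presented --- which should have been a warning sign, since the restriction to $(R\lmods)^\op$ in the definition of $\overleftarrow{G}$ is precisely what makes the lemma true.

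The fact actually needed is the opposite continuity property: for $X\in R\lmods$ the functor $\Hom_R(X,-)$ commutes with direct limits (this is the definition of $X$ being finitely presented), and colimits in $\Rmods(R\lmods)$ are computed pointwise, so $Y:M\mapsto(-,M)|_{(R\lmods)^\op}$ sends $\varinjlim M_i$ to $\varinjlim YM_i$; the contravariant hom $\Hom(-,G)$ then sends this colimit to the limit $\varprojlim\Hom(YM_i,G)\cong\varprojlim\overleftarrow{G}M_i$, as required. Under your reading the two halves do not even compose: if $\Psi$ sent limits of $R\Lmods$ to limits of $\Rmods(R\lmods)$, then $\Hom(-,G)$ would do nothing useful with them, since a hom functor contravariant in its argument converts colimits to limits, not limits to limits. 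So the variance issue you defer to the end as a ``mild subtlety'' is where the entire content of the proof lies; once it is resolved correctly you recover the paper's argument verbatim.
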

\begin{proof}
The Yoneda embedding $Y:R\Lmods\to ((R\lmods)^\op,\Ab)$ preserves direct limits. Therefore, the induced embedding $Y^\op:(R\Lmods)^\op\to ((R\lmods)^\op,\Ab)^\op$ preserves inverse limits. All representable functors preserve inverse limits. Therefore, since $\overleftarrow{G}$ is the composition
\begin{displaymath}
\xymatrix{(R\Lmods)^\op\ar[r]^{Y^\op}& ((R\lmods)^\op,\Ab)^\op\ar[rr]^{(-,G)}&&\Ab,}
\end{displaymath}
it does preserve inverse limits and, by the Yoneda lemma, it does restrict to $G$ on $(R\lmods)^\op$.
\end{proof}
\begin{lem}\thlabel{extofrest}For any functor $G:(R\Lmods)^\op\to\Ab$ which preserves inverse limits, and any left $R$-module $M$, there is an isomorphism
\begin{displaymath}
GM\cong \overleftarrow{G|_{(R\lmods)^\op}}M
\end{displaymath}
which is natural in $G$ and $M$.
\end{lem}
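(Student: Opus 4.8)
The plan is to construct an explicit natural transformation $\Theta_{G,M}\colon GM\to\overleftarrow{G|_{(R\lmods)^\op}}M$, then show it is an isomorphism first for finitely presented $M$ via the Yoneda lemma, and finally for arbitrary $M$ by reducing to the finitely presented case using that both sides preserve inverse limits.

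First I would define $\Theta_{G,M}$. Recall that $\overleftarrow{G|_{(R\lmods)^\op}}M=((-,M),G|_{(R\lmods)^\op})$ is the group of natural transformations from the functor $(R\lmods)^\op\to\Ab$ sending $N\mapsto(N,M)$ to $G|_{(R\lmods)^\op}$. Given $x\in GM$, let $\Theta_{G,M}(x)$ be the natural transformation whose component at $N\in R\lmods$ is the map $(N,M)\to GN$, $f\mapsto(Gf)(x)$. Naturality of this assignment in $N$ (as a functor on $(R\lmods)^\op$) is immediate from functoriality of $G$, and $\Theta_{G,M}$ is visibly additive and natural in both $G$ and $M$.

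Next, for $M\in R\lmods$, I claim that $\Theta_{G,M}$ is exactly the isomorphism $GM\cong((-,M),G|_{(R\lmods)^\op})$ given by the Yoneda lemma applied in $\Rmods(R\lmods)$ to the representable functor $(-,M)$ and the object $G|_{(R\lmods)^\op}$: the Yoneda isomorphism sends $x\in GM$ to the natural transformation $f\mapsto(Gf)(x)$, which is precisely $\Theta_{G,M}(x)$. Hence $\Theta_{G,M}$ is an isomorphism whenever $M$ is finitely presented.

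Finally, for arbitrary $M$, regard both $G$ and $\overleftarrow{G|_{(R\lmods)^\op}}$ as functors $(R\Lmods)^\op\to\Ab$ on the locally finitely presented category $R\Lmods$, whose finitely presented objects are the finitely presented modules (by \thref{fpfunc3}). By hypothesis $G$ preserves inverse limits, and $\overleftarrow{G|_{(R\lmods)^\op}}$ preserves inverse limits by \thref{xyz}. Since $\Theta_{G,-}$ is a natural transformation between these two inverse-limit-preserving contravariant functors which is an isomorphism on every finitely presented module, \thref{lemmaf}(b) yields that $\Theta_{G,M}$ is an isomorphism for every $M\in R\Lmods$; naturality in $G$ and $M$ was already recorded. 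The only point requiring care is the bookkeeping of variances in the definition of $\Theta$ and checking that it reproduces the Yoneda map on $R\lmods$; everything else is a direct appeal to the machinery already in place.
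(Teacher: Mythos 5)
Your proposal is correct and follows essentially the same route as the paper: the same explicit map $x\mapsto(f\mapsto (Gf)(x))$, the Yoneda lemma to handle finitely presented $M$, and \thref{lemmaf} together with preservation of inverse limits (using \thref{xyz} for the extension) to pass to arbitrary $M$. No gaps.
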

\begin{proof}
There is a homomorphism
\begin{displaymath}
\theta_M:GM\to ((-,M),G|_{(R\lmods)^\op})
\end{displaymath}
defined by sending an element $x\in GM$ to the natural transformation $\alpha_x:(-,M)\to G|_{(R\lmods)^\op}$ which, at any $L\in R\lmods$, has the component
\begin{displaymath}
\alpha_{xL}:(L,M)\to GL:f\mapsto G(f)(x).
\end{displaymath}
The map $\theta_M$ is natural in $M$, and, by the Yoneda lemma, it is an isomorphism when $M$ is finitely presented. Therefore, we have a natural transformation $\theta:G\to \overleftarrow{G|_{(R\lmods)^\op}}$ which is an isomorphism when evaluated at finitely presented left $R$-modules. However, every left $R$-module can be written as a direct limit (i.e. an inverse limit in $(R\Lmods)^\op$) of finitely presented left $R$-modules. Since both $G$ and $\overleftarrow{G|_{(R\lmods)^\op}}$ preserve inverse limits, it follows from \thref{lemmaf} that $\theta$ is an isomorphism.
\end{proof}
\begin{thm}\thlabel{unqext}Let $G:(R\lmods)^\op\to\Ab$ be any additive functor. Then $\overleftarrow{G}$ is the unique extension of $G$ to an additive functor $(R\Lmods)^\op\to\Ab$ which preserves inverse limits.
\end{thm}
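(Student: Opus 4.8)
The plan is to assemble this from the three lemmas immediately preceding it. First I would verify that $\overleftarrow{G}$ is indeed an extension: by the Yoneda lemma there is a natural isomorphism $\overleftarrow{G}|_{(R\lmods)^\op}\cong G$, and by \thref{xyz} the functor $\overleftarrow{G}$ preserves inverse limits. So $\overleftarrow{G}$ is an additive, inverse-limit-preserving functor $(R\Lmods)^\op\to\Ab$ restricting to $G$ on $(R\lmods)^\op$; this settles existence.

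For uniqueness, suppose $G':(R\Lmods)^\op\to\Ab$ is any additive functor preserving inverse limits with $G'|_{(R\lmods)^\op}\cong G$. Then I apply \thref{extofrest} to $G'$: for every left $R$-module $M$ there is an isomorphism $G'M\cong\overleftarrow{G'|_{(R\lmods)^\op}}M$ natural in $M$. Since $G'|_{(R\lmods)^\op}\cong G$, the functor $\overleftarrow{G'|_{(R\lmods)^\op}}$ is isomorphic to $\overleftarrow{G}$ (the operation $G\mapsto\overleftarrow{G}$ is functorial in $G$, as noted in the paragraph introducing $\overleftarrow{(-)}$, so it sends the isomorphism $G'|_{(R\lmods)^\op}\cong G$ to an isomorphism $\overleftarrow{G'|_{(R\lmods)^\op}}\cong\overleftarrow{G}$). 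Composing these natural isomorphisms gives $G'\cong\overleftarrow{G}$, as required. This is the exact contravariant mirror of the proof of \thref{genunqext}, with \thref{extofrest} playing the role of \thref{blah} and \thref{xyz} playing the role of the statement that $\overrightarrow{F}$ preserves direct limits.

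I do not anticipate a genuine obstacle here, since all the substantive work has been pushed into \thref{xyz} and \thref{extofrest}; the only point requiring a little care is the bookkeeping that $\overleftarrow{(-)}$ respects natural isomorphisms, so that transporting the isomorphism $G'|_{(R\lmods)^\op}\cong G$ through it is legitimate — but this is immediate from functoriality of $((-,M),-)$ in its second argument. One could alternatively phrase the whole argument as: the assignment $G\mapsto\overleftarrow{G}$ and restriction $H\mapsto H|_{(R\lmods)^\op}$ are mutually inverse (up to natural isomorphism) between additive functors $(R\lmods)^\op\to\Ab$ and inverse-limit-preserving additive functors $(R\Lmods)^\op\to\Ab$, which packages existence and uniqueness together.
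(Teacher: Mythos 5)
Your proposal is correct and matches the paper's own proof: existence follows from \thref{xyz} together with the Yoneda lemma, and uniqueness by applying \thref{extofrest} to $G'$ and transporting $G'|_{(R\lmods)^\op}\cong G$ through $\overleftarrow{(-)}$. No issues.
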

\begin{proof}\thref{xyz} shows that $\overleftarrow{G}$ preserves inverse limits. We need only prove the uniqueness.

Let $G':(R\Lmods)^\op\to\Ab$ be any other such functor. Since $G'|_{(R\lmods)^\op}\cong G$, by \thref{extofrest}, we have $G'\cong \overleftarrow{G'|_{(R\lmods)^\op}}\cong \overleftarrow{G}$.
\end{proof}
\begin{prop}\thlabel{bimpa}Suppose that $G:(R\Lmods)^\op\to\Ab$ preserves inverse limits and is finitely presented when restricted to $(R\lmods)^\op$. Then $G\cong F^*$ for some functor $F:R\Lmods\to\Ab$ which preserves direct limits and is finitely presented when restricted to $R\lmods$.
\end{prop}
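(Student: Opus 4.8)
The plan is to reduce to the finitely presented level, where duality of functors is an equivalence, dualise there, and then extend back along direct limits. First I would restrict $G$ to $(R\lmods)^\op$, obtaining a finitely presented functor $G_0 := G|_{(R\lmods)^\op}$. Since the dual of a finitely presented functor $(R\lmods)^\op\to\Ab$ is again finitely presented (the contravariant counterpart of \thref{propaimpb}, established just above) and double dualisation of such functors is trivial by \thref{equiv}, I can set $F_0 := G_0^*:R\lmods\to\Ab$; this $F_0$ is finitely presented and satisfies $F_0^* = G_0^{**}\cong G_0$.

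Next I would extend $F_0$ to all of $R\Lmods$ by putting $F := \overrightarrow{F_0}$. By \thref{genunqext}, $F$ preserves direct limits and restricts on $R\lmods$ to a functor isomorphic to $F_0$, which is finitely presented; so $F$ has exactly the two properties demanded of it in the statement, and all that is left is to identify $F^*$ with $G$.

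For that identification I would argue by uniqueness of the inverse-limit-preserving extension. On the one hand, $F^*$ preserves inverse limits because $F$ preserves direct limits, by \thref{reflection}. On the other hand, directly from the pointwise definition of the dual of a functor, $F^*|_{(R\lmods)^\op}\cong(F|_{R\lmods})^*\cong F_0^*\cong G_0$. Thus $G$ and $F^*$ are both inverse-limit-preserving functors $(R\Lmods)^\op\to\Ab$ whose restrictions to $(R\lmods)^\op$ are isomorphic to $G_0$, so \thref{unqext} --- concretely, applying \thref{extofrest} to each and using that $\overleftarrow{(-)}$ carries isomorphisms to isomorphisms, which gives $G\cong\overleftarrow{G_0}\cong F^*$ --- forces $G\cong F^*$.

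I do not expect a genuine obstacle here: the argument is essentially an assembly of results already in hand. The one point that needs a little care is the bookkeeping around the restriction $F^*|_{(R\lmods)^\op}\cong(F|_{R\lmods})^*$ and the transport of the isomorphism with $G_0$ through the operation $\overleftarrow{(-)}$, so that \thref{unqext} can legitimately be invoked; and one should keep track of which of the two ``double dual is trivial'' statements collected in \thref{equiv} is in play at each step.
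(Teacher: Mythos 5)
Your proposal is correct and is essentially identical to the paper's proof: the paper likewise sets $F=\overrightarrow{G_0^*}$ with $G_0=G|_{(R\lmods)^\op}$, notes that $F^*$ preserves inverse limits by \thref{reflection} and that $F^*|_{(R\lmods)^\op}=(F|_{R\lmods})^*\cong G_0^{**}\cong G_0$, and concludes $F^*\cong G$ by the uniqueness statement \thref{unqext}. Your extra care over the finite presentability of $F|_{R\lmods}$ (via the contravariant counterpart of \thref{propaimpb}) is a point the paper leaves implicit, but the route is the same.
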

\begin{proof}
Let $F=\overrightarrow{G_0^*}$, the extension of $G^*_0$ to a functor $R\Lmods\to\Ab$ which preserves direct limits, where $G_0=G|_{(R\lmods)^\op}$. Then $F^*$ preserves inverse limits by \thref{reflection}, and $F^*|_{(R\lmods)^\op}=(F|_{R\lmods})^*\cong G_0^{**}\cong G_0$, so by \thref{unqext}, $F^*\cong G$.
\end{proof}
\begin{rmk}\thref{bimpa} shows that \textbf{(b) implies (a)} in \thref{contravarconditions}.
\end{rmk}
\begin{lem}\thlabel{dualhoms}For any finitely presented left $R$-module $A$, and any finitely presented right $R$-module $N$, there is an isomorphism $(N^*,A)\cong (A^*,N)$ which is natural in both $N$ and $A$.
\end{lem}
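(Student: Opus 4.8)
The plan is to reduce the statement to the hom-tensor duality of \thref{homtens} by replacing $A$ and $N$ with their double duals, using that for finitely presented modules the canonical maps into the double dual are isomorphisms.

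First I would record the input data: since $R$ is an artin algebra, $A$ finitely presented over $R$ on the left and $N$ finitely presented on the right force $A^*$ to be a finitely presented right $R$-module, $N^*$ a finitely presented left $R$-module, and (as noted in Subsection~\ref{defs}) the canonical maps $\eta_A\colon A\to A^{**}$ and $\eta_N\colon N\to N^{**}$ to be isomorphisms of $R$-modules. Next I would apply \thref{homtens} with its left-module argument set to $N^*$ and its right-module argument set to $A^*$; this yields an isomorphism
\begin{displaymath}
(N^*,(A^*)^*)\cong (A^*,(N^*)^*),
\end{displaymath}
i.e.\ $(N^*,A^{**})\cong (A^*,N^{**})$, natural in $A^*$ and $N^*$ and hence, since dualisation is functorial, natural in $A$ and $N$. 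Finally I would post-compose on the left-hand side with $(N^*,\eta_A^{-1})\colon (N^*,A^{**})\to (N^*,A)$ and on the right-hand side with $(A^*,\eta_N^{-1})\colon (A^*,N^{**})\to (A^*,N)$ to obtain the desired natural isomorphism $(N^*,A)\cong (A^*,N)$.

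The only point needing care is naturality of the composite: one must check that splicing in the two maps built from $\eta_A$ and $\eta_N$ preserves naturality in $A$ and $N$. This is immediate, since $\eta$ is a natural transformation from the identity to $(-)^{**}$ — so its inverse, where defined on finitely presented objects, is natural as well — and $\Hom$ is functorial in each variable; there is no genuine obstacle beyond this bookkeeping. As an alternative to invoking \thref{homtens}, one could argue directly from the equivalence $(-)^*\colon (R\lmods)^\op\to\rmods R$ of Subsection~\ref{defs}: its (full) faithfulness gives a natural bijection $(N^*,A)\cong (A^*,(N^*)^*)=(A^*,N^{**})$, after which one applies $(A^*,\eta_N^{-1})$ exactly as above.
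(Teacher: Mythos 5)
Your proof is correct and is essentially the paper's own argument: both use the isomorphisms $\eta_A\colon A\to A^{**}$ and $\eta_N\colon N\to N^{**}$ together with the hom-tensor duality of \thref{homtens} to produce the chain $(N^*,A)\cong(N^*,A^{**})\cong(A^*,N^{**})\cong(A^*,N)$, with naturality tracked exactly as you describe. Your alternative via the full faithfulness of the equivalence $(-)^*\colon(R\lmods)^\op\to\rmods R$ is also the paper's second proof, so nothing further is needed.
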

\begin{proof}
By repeated use of \thref{homtens} and the natural isomorphisms $A\cong A^{**}$ and $N\cong N^{**}$ (which are natural in $A$ and $N$ respectively), 
\begin{displaymath}
(N^*,A)\cong (N^*,A^{**})\cong (A^*,N^{**})\cong (A^*,N)
\end{displaymath}
which are natural in both $A$ and $N$.

Alternatively, one can use the fact that, for any functor $F:C\to D$, the maps $C(c,c')\to D(Fc,Fc')$ are natural in both variables. Therefore, by applying this to the equivalence $(-)^*:(R\lmods)^\op\to\rmods R$ and using the isomorphism $N\cong N^{**}$ (natural in $N$) we obtain isomorphisms
\begin{displaymath}
(N^*,A)\cong (A^*,N^{**})\cong (A^*,N)
\end{displaymath}
which are natural in both $A$ and $N$.
\end{proof}
\begin{lem}For any finitely presented functor $G:(R\lmods)^\op\to\Ab$, its predual $G_*$ is finitely presented.
\end{lem}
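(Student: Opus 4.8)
The plan is to reduce everything to the equivalence of categories $(-)^*:\rmods R\to(R\lmods)^\op$ recorded in Subsection \ref{defs}. Unwinding the definition of predual given there, $G_*:\rmods R\to\Ab$ is precisely the composite $G\circ(-)^*$, so $G_*(M)=G(M^*)$ for every $M\in\rmods R$. Since precomposition with an equivalence is again an equivalence, $H\mapsto H\circ(-)^*$ is an equivalence $(R\lmods)^\op\Lmods\to(\rmods R)\Lmods$ carrying $G$ to $G_*$; as being finitely presented is a categorical property (preservation of direct limits by the corepresented functor, see Section \ref{lfp}, and note that direct limits in these functor categories are computed pointwise), this already forces $G_*$ to be finitely presented. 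I would prefer, however, to spell this out concretely, so that an explicit presentation of $G_*$ by representables is produced.

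For the concrete argument I would first invoke the finite presentability of $G$ together with \thref{fpfunc1}, applied to the (skeletally small additive) category $(R\lmods)^\op$, to fix finitely presented left $R$-modules $A$ and $B$ and an exact sequence
\begin{displaymath}
\xymatrix{(-,A)\ar[r]&(-,B)\ar[r]&G\ar[r]&0}
\end{displaymath}
of functors on $(R\lmods)^\op$, exact at every object of $R\lmods$. Next I would precompose every term with $(-)^*:\rmods R\to(R\lmods)^\op$; this preserves exactness, since exactness of a sequence of natural transformations is tested objectwise and precomposition merely reindexes the objects. Then, by \thref{dualhoms}, for $M\in\rmods R$ there are isomorphisms $(M^*,A)\cong(A^*,M)$ and $(M^*,B)\cong(B^*,M)$ natural in $M$, so $(-,A)\circ(-)^*\cong(A^*,-)$ and $(-,B)\circ(-)^*\cong(B^*,-)$ as functors $\rmods R\to\Ab$. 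This yields an exact sequence
\begin{displaymath}
\xymatrix{(A^*,-)\ar[r]&(B^*,-)\ar[r]&G_*\ar[r]&0}
\end{displaymath}
in $(\rmods R)\Lmods$ with $A^*,B^*\in\rmods R$, which by \thref{fpfunc1} exhibits $G_*$ as finitely presented.

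I do not expect a genuine obstacle here: the entire content is bookkeeping. The one point requiring care is aligning conventions --- namely confirming that $G_*$ as defined in Subsection \ref{defs} really is $G\circ(-)^*$, and that under this composition the contravariant representable $(-,A)$ on $(R\lmods)^\op$ matches, via the natural isomorphism of \thref{dualhoms}, the covariant representable $(A^*,-)$ on $\rmods R$ (and likewise for $B$). Once these identifications are made the conclusion is immediate.
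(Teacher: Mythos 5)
Your concrete argument is exactly the paper's proof: take a presentation $(-,A)\to(-,B)\to G\to 0$, evaluate at $N^*$, and use \thref{dualhoms} to rewrite the representables as $(A^*,-)$ and $(B^*,-)$, exhibiting $G_*$ as a cokernel of a map of representables on $\rmods R$. The preliminary ``soft'' observation that precomposition with the duality equivalence preserves finite presentability is a correct (and slightly slicker) packaging of the same fact, but the substance coincides with the paper's argument.
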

\begin{proof}
Find $(f:A\to B)\in R\lmods$ and a right exact sequence
\begin{displaymath}
\xymatrix{(-,A)\ar[r]^{(-,f)}&(-,B)\ar[r]&G\ar[r]&0}
\end{displaymath}
Then for any $N\in \rmods R$, by \thref{dualhoms}, we have a commutative diagram 
\begin{displaymath}
\xymatrix{(N^*,A)\ar[d]\ar[r]^{(N^*,f)}&(N^*,B)\ar[d]\ar[r]&G(N^*)\ar[r]&0\\
(A^*,N)\ar[r]^{(f^*,N)}&(B^*,N)}
\end{displaymath}
in which the vertical arrows are isomorphisms, both natural in $N$. Therefore $G_*$ is the cokernel of a morphism $((A^*,-)\to(B^*,-))\in(\rmods R,\Ab)$, and so since both $A^*$ and $B^*$ are finitely presented, $G_*$ is finitely presented.
\end{proof}
\begin{prop}\thlabel{extensionformula}For any finitely presented functor $G:(R\lmods)^\op\to\Ab$ and any left $R$-module $M$, there is an isomorphism
\begin{displaymath}
\overleftarrow GM\cong (G_*,-\otimes M)^*.
\end{displaymath}
which is natural in both $G$ and $M$. 
\end{prop}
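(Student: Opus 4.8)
The plan is to rewrite the right-hand side as the dual of a covariant, direct-limit-preserving functor and then invoke the uniqueness of extensions along direct limits. First I would observe that $\overleftarrow{G}M\cong(\overrightarrow{G^*}M)^*$, naturally in $G$ and $M$. Indeed, $G$ is finitely presented, so $G\cong G^{**}=(G^*)^*$ by \thref{equiv}, whence $\overleftarrow{G}M=((-,M),G)\cong((-,M),(G^*)^*)$ in $\Rmods(R\lmods)$; applying the hom-tensor duality \thref{homtenspreadd} over the $K$-linear category $R\lmods$ (with $(-,M)|_{(R\lmods)^\op}$ playing the role of the right module and $G^*$ that of the left module) rewrites this as $\bigl((-,M)|_{(R\lmods)^\op}\otimes_{R\lmods}G^*\bigr)^*$, and by \thref{extdef} the argument of $(-)^*$ here is exactly $\overrightarrow{G^*}M$. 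So it remains to prove $\overrightarrow{G^*}M\cong(G_*,-\otimes M)$, naturally in $G$ and $M$.

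As functors of $M\in R\Lmods$, both sides preserve direct limits: $\overrightarrow{G^*}$ does by construction, while $(G_*,-\otimes M)$ does because $G_*$ is a finitely presented object of $(\rmods R)\Lmods$ (this is the lemma immediately preceding the present proposition), so $\Hom_{(\rmods R)\Lmods}(G_*,-)$ preserves direct limits, and $M\mapsto(-\otimes M)$ preserves them into $(\rmods R)\Lmods$, in which direct limits are computed pointwise and each $N\otimes-$ preserves them. By \thref{genunqext} it therefore suffices to produce an isomorphism $(G_*,-\otimes L)\cong G^*L$ that is natural in $G$ and in $L\in R\lmods$.

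For this I would exploit that the duality $(-)^*:(R\lmods)^\op\to\rmods R$ induces an equivalence $\Phi:\Rmods(R\lmods)\to(\rmods R)\Lmods$, $\Phi(H)(N)=H(N^*)$, for which $\Phi(G)=G_*$ and, by \thref{dualhoms}, $\Phi\bigl((-,C)|_{(R\lmods)^\op}\bigr)\cong(C^*,-)|_{\rmods R}$ for $C\in R\lmods$. Using \thref{weirdiso} one checks that $\Phi^{-1}(-\otimes L)\cong\bigl((L,-)|_{R\lmods}\bigr)^*$, and then
\[
(G_*,-\otimes L)\;\cong\;\bigl(G,\,((L,-)|_{R\lmods})^*\bigr)_{\Rmods(R\lmods)}\;\cong\;\bigl(G\otimes_{R\lmods}(L,-)|_{R\lmods}\bigr)^*\;\cong\;(GL)^*\;=\;G^*L,
\]
where the middle isomorphism is \thref{homtenspreadd} and the last is the co-Yoneda isomorphism $G\otimes_{R\lmods}(L,-)|_{R\lmods}\cong GL$ (cf. \thref{tensyoneda}); each of these is natural in $G$ and $L$. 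Chaining the three steps yields $\overleftarrow{G}M\cong(\overrightarrow{G^*}M)^*\cong(G_*,-\otimes M)^*$.

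The only genuinely delicate point, I expect, is the duality bookkeeping in the last step: one must keep all the variances straight when setting up $\Phi$ and, in particular, check carefully that \thref{weirdiso} and \thref{dualhoms} really combine to give $\Phi^{-1}(-\otimes L)\cong((L,-)|_{R\lmods})^*$. As an alternative one could instead dualise an explicit presentation $(-,A)\xrightarrow{(-,f)}(-,B)\to G\to 0$ in $\Rmods(R\lmods)$ on both sides, deriving $(G_*,-\otimes M)\cong\Ker(B^*\otimes M\to A^*\otimes M)$ from the cokernel description of $G_*$ in the preceding lemma together with co-Yoneda, and $\overrightarrow{G^*}M\cong\Ker(B^*\otimes M\to A^*\otimes M)$ from the kernel description of $G^*$ together with the exactness of $\overrightarrow{(-)}$; but this forces one to verify that both connecting maps equal $f^*\otimes\mathrm{id}$, which comes down to the naturality of the isomorphism of \thref{weirdiso} in its first variable.
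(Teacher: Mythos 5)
Your argument is correct, and it shares the paper's overall strategy: realise the right-hand side as the dual of a direct-limit-preserving functor, compare with $G$ on finitely presented modules, and conclude by the uniqueness of (co)limit-preserving extensions (\thref{genunqext}, \thref{unqext}). Where you differ is in the bookkeeping of the key identity on $R\lmods$. The paper sets $FM=(G_*,-\otimes M)$, observes $F|_{R\lmods}=d(G_*)$, and then gets $F^*|_{(R\lmods)^\op}\cong G$ by citing \thref{eqdual} ($F^*\cong(dF)_*$) together with $d^2\cong\mathrm{id}$ on finitely presented functors; you instead route through the identity $\overleftarrow{G}M\cong(\overrightarrow{G^*}M)^*$ (which is exactly \thref{woosh} from the final section, specialised to $C=R\Lmods$) and verify $(G_*,-\otimes L)\cong G^*L$ for $L\in R\lmods$ by a direct computation with \thref{homtenspreadd}, \thref{weirdiso} and co-Yoneda, avoiding the functor $d$ entirely. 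Since the proof of \thref{eqdual} is itself essentially this computation, the two arguments have the same mathematical content; yours is more self-contained and makes the role of the hom-tensor adjunction explicit, while the paper's is shorter because it reuses \thref{eqdual} as a black box. Two small points of care, both of which you handle adequately: the co-Yoneda isomorphism you need is $G\otimes_{R\lmods}(L,-)\cong GL$ for contravariant $G$, i.e.\ \thref{tensyoneda} applied to the opposite category; and the naturality in $G$ of the isomorphism produced by the uniqueness step deserves a word (the paper is equally brief about this, extracting it from the proof of \thref{unqext} rather than its statement).
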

\begin{proof}
Consider the functor $F:R\Lmods\to\Ab$, defined by $FM=(G_*,-\otimes M)$ for all $M\in R\Lmods$. Then $F$ is the composition 
\begin{displaymath}
\xymatrix{R\Lmods\ar[r]^T&(\rmods R)\Lmods\ar[rr]^{(G_*,-)}&&\Ab}
\end{displaymath}
where $T$ maps $M\in R\Lmods$ to $-\otimes M\in (\rmods R)\Lmods$. Both $T$ and $(G_*,-)$ preserve direct limits, and therefore $F$ preserves direct limits. Note that $F|_{R\lmods}=d(G_*)$.

Now let $G'=F^*$. By the proof of \thref{unqext}, there is an isomorphism $\overleftarrow{G'|_{(R\lmods)^\op}}\cong G'$ which is natural in $G'$, and hence in $G$. We need only show that there is an isomorphism $G'|_{(R\lmods)^\op}\cong G$ which is also natural in $G$. Indeed, by \thref{eqdual}, we have the isomorphisms
\begin{displaymath}
G'|_{(R\lmods)^\op}=(d(G_*))^*\cong (d(d(G_*)))_*\cong G_{**}\cong G,
\end{displaymath}
each of which is natural in $G$.
\end{proof}
\begin{cor}\thlabel{rightex}Let
\begin{displaymath}
\xymatrix{G'\ar[r]&G''\ar[r]&G\ar[r]&0}
\end{displaymath}
be a right exact sequence in $\rmods(R\lmods)$. The induced sequence
\begin{displaymath}
\xymatrix{\overleftarrow{ G'}\ar[r]&{\overleftarrow G''}\ar[r]&\overleftarrow G\ar[r]&0}
\end{displaymath}
is exact when evaluated at any left $R$-module
\end{cor}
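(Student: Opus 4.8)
The plan is to move the whole sequence across the natural isomorphism of \thref{extensionformula} and then let the exactness of $K$-dualisation do the work. It is worth first recording why the obvious approach does not suffice: every left $R$-module $M$ is a direct limit $M=\varinjlim_i M_i$ of finitely presented modules, hence an inverse limit in $(R\Lmods)^{\op}$, and $\overleftarrow{G}$ preserves inverse limits by \thref{xyz}, so $\overleftarrow{G}M\cong\varprojlim_i\overleftarrow{G}M_i$; for each $i$ the sequence $\overleftarrow{G'}M_i\to\overleftarrow{G''}M_i\to\overleftarrow{G}M_i\to 0$ is exact, since $M_i$ is finitely presented and so $\Hom_{\Rmods(R\lmods)}((-,M_i),-)$ is exact. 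But inverse limits are only left exact, so one cannot push the right-hand exactness through the limit. Circumventing this is the main point, and \thref{extensionformula} is exactly what does it.

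Since $G',G'',G$ all lie in $\rmods(R\lmods)$, they are finitely presented functors $(R\lmods)^{\op}\to\Ab$, so \thref{extensionformula} supplies isomorphisms $\overleftarrow{G}M\cong(G_*,-\otimes M)^*$, and likewise for $G'$ and $G''$, natural in the functor variable. Hence, for a fixed left $R$-module $M$, the sequence $\overleftarrow{G'}M\to\overleftarrow{G''}M\to\overleftarrow{G}M\to 0$ induced by $G'\to G''\to G\to 0$ is, maps included, the sequence obtained by applying in turn: predualisation $G\mapsto G_*$ (into $(\rmods R,\Ab)$); then the contravariant functor $N'\mapsto(N',-\otimes M)$ on $(\rmods R,\Ab)$; then the $K$-dual $(-)^*=\Hom_K(-,J)$.

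It then remains to chase exactness through these three functors. Exactness of $G'\to G''\to G\to 0$ in $\rmods(R\lmods)$ is pointwise, and every $L\in R\lmods$ is isomorphic to $N^*$ for some $N\in\rmods R$ (take $N=L^*$), so $G'_*\to G''_*\to G_*\to 0$ — which at $N$ is $G'\to G''\to G\to 0$ evaluated at $N^*$ — is exact in $(\rmods R,\Ab)$. Applying the left exact contravariant functor $(-,-\otimes M)$ yields an exact sequence $0\to(G_*,-\otimes M)\to(G''_*,-\otimes M)\to(G'_*,-\otimes M)$ of $K$-modules, and applying $(-)^*$, which is exact because $J$ is injective, returns an exact sequence $(G'_*,-\otimes M)^*\to(G''_*,-\otimes M)^*\to(G_*,-\otimes M)^*\to 0$. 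Under the isomorphisms above this is precisely $\overleftarrow{G'}M\to\overleftarrow{G''}M\to\overleftarrow{G}M\to 0$, and $M$ was arbitrary. The one point deserving care is that the isomorphism of \thref{extensionformula} carries $\overleftarrow{\alpha}_M$ to $(\alpha_*,-\otimes M)^*$ for a natural transformation $\alpha$, which is exactly naturality in the functor variable.
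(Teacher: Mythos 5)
Your proof is correct and follows essentially the same route as the paper's: identify $\overleftarrow{G}M$ with $(G_*,-\otimes M)^*$ via \thref{extensionformula}, pass the right exact sequence through predualisation, the left exact contravariant hom into $-\otimes M$, and then the exact functor $(-)^*$. The extra details you supply (why the naive inverse-limit argument fails, why $G'_*\to G''_*\to G_*\to 0$ is exact, and that naturality in the functor variable is what identifies the maps) are all accurate refinements of steps the paper leaves implicit.
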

\begin{proof}
The induced sequence
\begin{displaymath}
\xymatrix{G'_*\ar[r]&G''_*\ar[r]&G_*\ar[r]&0}
\end{displaymath}
is exact, and, for any left $R$-module $M$, so is
\begin{displaymath}
\xymatrix{0\ar[r]&(G_*,-\otimes M)\ar[r]&(G''_*,-\otimes M)\ar[r]&(G'_*,-\otimes M)}.
\end{displaymath}
But then, since dualising preserves exactness, the sequence
\begin{displaymath}
\xymatrix{(G'_*,-\otimes M)^*\ar[r]&(G''_*,-\otimes M)^*\ar[r]&(G_*,-\otimes M)^*\ar[r]&0}
\end{displaymath}
is exact. However, this sequence is isomorphic to 
\begin{displaymath}
\xymatrix{\overleftarrow{G'}M\ar[r]&\overleftarrow{G''}M\ar[r]&\overleftarrow GM\ar[r]&0}
\end{displaymath}
by \thref{extensionformula} (and naturally so in $M$).
\end{proof}
\begin{lem}\thlabel{littlething}For all left $R$-modules $M$ and $N$, there is an isomorphism $\overleftarrow{(-,N)}(M)\cong(M,N)$ which is natural in $M$ and $N$.
\end{lem}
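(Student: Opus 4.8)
The plan is to recognise $\overleftarrow{(-,N)}$ as the canonical inverse-limit-preserving extension of the restricted representable $(-,N)|_{(R\lmods)^\op}$, and then to read off the isomorphism from the uniqueness of such extensions (\thref{unqext}, in the concrete form \thref{extofrest}) together with the elementary fact that the representable functor $(-,N)$ on all of $R\Lmods$ already preserves inverse limits. (Here I am using the convention that $\overleftarrow{(-,N)}$ abbreviates $\overleftarrow{(-,N)|_{(R\lmods)^\op}}$; the first thing to pin down in a careful write-up is exactly this bookkeeping.)

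So the first step is to check that the contravariant hom-functor $(-,N):(R\Lmods)^\op\to\Ab$ preserves inverse limits. This is immediate: an inverse limit in $(R\Lmods)^\op$ is a direct limit $\varinjlim M_i$ in $R\Lmods$, and the universal property of direct limits says precisely that the canonical comparison map $(\varinjlim M_i,N)\to\varprojlim(M_i,N)$ is an isomorphism (more abstractly, every representable functor sends colimits to limits). Now applying \thref{extofrest} with $G=(-,N):(R\Lmods)^\op\to\Ab$ yields, for every left $R$-module $M$, an isomorphism
\[
(M,N)=GM\;\cong\;\overleftarrow{G|_{(R\lmods)^\op}}M\;=\;\overleftarrow{(-,N)}M
\]
which is natural in $G$ and in $M$. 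Since $N\mapsto(-,N)$ is itself a functor $R\Lmods\to((R\Lmods)^\op,\Ab)$, naturality in $G$ upgrades to naturality in $N$, which gives the claim.

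I do not expect a genuine obstacle here; the content is entirely formal, and the only thing to be careful about is the identification of $\overleftarrow{(-,N)}$ with the extension of a restriction. For those who prefer an explicit map, one can instead observe directly that post-composition $g\mapsto(g\circ-)$ defines a natural homomorphism $(M,N)\to\bigl((-,M)|_{(R\lmods)^\op},(-,N)|_{(R\lmods)^\op}\bigr)=\overleftarrow{(-,N)}M$: it is a monomorphism because the canonical maps from finitely presented approximations of $M$ are jointly epic, and an epimorphism because a natural transformation $\tau$ induces, via its components on those approximations, a cocone under them with vertex $N$ and hence a map out of $M=\varinjlim$; but the extension-uniqueness argument above is the cleaner route and avoids the diagram chase.
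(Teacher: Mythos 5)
Your proof is correct, and it is only superficially different from the paper's. You identify $\overleftarrow{(-,N)}$ as the inverse-limit-preserving extension of the restricted representable, observe that the full representable $(-,N):(R\Lmods)^\op\to\Ab$ already preserves inverse limits, and conclude via \thref{extofrest}; the paper instead applies \thref{faithext} to the restricted Yoneda embedding $R\Lmods\to((R\lmods)^\op,\Ab):A\mapsto(-,A)$ to see that it is fully faithful, whence $\overleftarrow{(-,N)}(M)=((-,M),(-,N))\cong(M,N)$ straight from the definition of $\overleftarrow{G}$. The two routes are essentially equivalent, since \thref{extofrest} and \thref{faithext} are themselves proved by the same device (the canonical map is an isomorphism on finitely presented modules by Yoneda, and both sides respect the relevant limits, so \thref{lemmaf} applies); neither buys anything over the other. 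Your bookkeeping point about $\overleftarrow{(-,N)}$ abbreviating the extension of the restriction, and your derivation of naturality in $N$ from naturality in $G$ together with the functoriality of $N\mapsto(-,N)$, are both exactly the right things to pin down.
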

\begin{proof}
By \thref{faithext} and the Yoneda embedding, the functor 
\begin{displaymath}
R\Lmods\to ((R\lmods)^\op,\Ab):A\mapsto (-,A)
\end{displaymath}
is fully faithful, and therefore there is an isomorphism
\begin{displaymath}
\overleftarrow{(-,N)}(M)=((-,M),(-,N))\cong(M,N)
\end{displaymath}
which is natural in $M$ and $N$.
\end{proof}
\begin{rmk}We now show how to prove that \textbf{(b) implies (c)} in \thref{contravarconditions}. Let $G:(R\Lmods)^\op\to\Ab$ be given, and assume that $G_0=G|_{(R\lmods)^\op}$ is finitely presented. Then there is a morphism $(f:A\to B)\in R\lmods$ and a right exact sequence
\begin{displaymath}
\xymatrix{(-,A)\ar[r]^{(-,f)}&(-,B)\ar[r]&G_0\ar[r]&0,}
\end{displaymath}
where the representable functors are read as functors on $(R\lmods)^\op$. By \thref{rightex}, there is an induced sequence
\begin{displaymath}
\xymatrix{\overleftarrow{(-,A)}\ar[r]^{\overleftarrow{(-,f)}}&\overleftarrow{(-,B)}\ar[r]&\overleftarrow{G_0}\ar[r]&0,}
\end{displaymath}
which is exact when evaluated at any left $R$-module. However, by \thref{littlething} $\overleftarrow{(-,f)}$ is isomorphic to $(-,f):(-,A)\to(-,B)$, which is now read as a natural transformation between functors on $(R\Lmods)^\op$, giving $\overleftarrow{G_0}\cong G$, so indeed we obtain a sequence as required by (c) in \thref{contravarconditions}. Clearly \textbf{(c) implies (d)}.
\end{rmk}
\begin{rmk}\thlabel{dimpb}To prove that \textbf{(d) implies (b)} in \thref{contravarconditions}, note that $\rmods(R\lmods)$ is closed under cokernels, so certainly $G|_{(R\lmods)^\op}$ is finitely presented. From \thref{rightex}, we see that $G$ is the extension of some finitely presented functor $(R\lmods)^\op\to\Ab$ to a functor which preserves  inverse limits.
\end{rmk}
\begin{rmk}\thlabel{cimpe}To prove that \textbf{(c) implies (e)} in \thref{contravarconditions}, take a sequence 
\begin{displaymath}
\xymatrix{(-,A)\ar[r]^{(-,f)}&(-,B)\ar[r]&G\ar[r]&0,}
\end{displaymath}
as described in (c). By the duality between finitely presented $R$-modules, there is an exact sequence 
\begin{displaymath}
\xymatrix{(-,A^{**})\ar[r]^{(-,f^{**})}&(-,B^{**})\ar[r]&G\ar[r]&0.}
\end{displaymath}
However, by \thref{homtens}, this gives an exact sequence 
\begin{displaymath}
\xymatrix{(A^{*},(-)^*)\ar[r]^{(f^{*},(-)^*)}&(B^{*},(-)^*)\ar[r]&G\ar[r]&0.}
\end{displaymath}
However, this shows that $G\cong F_*$, where $F$ is the cokernel of the natural transformation $(f^*,-):(A^*,-)\to(B^*,-)$ , thus establishing (e).

These steps are essentially reversible, so \textbf{(e) implies (c)} in \thref{contravarconditions}.
\end{rmk}
\subsection{Annihilator functors and duality of pp pairs}\label{annpp}

So far, the dualities we have considered have been purely algebraic. However, there is a duality between pp formulas which corresponds to the equivalence $d:((R\lmods)\lmods)^\op\to (\rmods R)\lmods$.

For a pp-$n$-formula $\varphi(\overline x)=\exists \overline wA\overline x\doteq B\overline w$ in the language of left $R$-modules, its \textbf{dual} is defined to be the pp-$n$-formula in the language of right $R$-modules given by
\begin{displaymath}
(D\varphi)(\overline x)=\exists\overline y\left(\overline x\doteq\overline yA\wedge\overline yB\doteq\overline 0\right).
\end{displaymath}
There is a similar map sending pp formulas in the language of left $R$-modules to pp formulas in the language of right $R$-modules. 

The operation $\varphi\mapsto D\varphi$ provides a duality on the level of pp pairs, since $\psi\leqslant\varphi$ implies $D\varphi\leqslant D\psi$ and $D^2\varphi=\varphi$ for all pp formulas $\varphi$ and $\psi$ in the language of left $R$-modules. In particular, to each pp pair $\varphi/\psi$ in the language of left $R$-modules, we have a corresponding pp pair $D\psi/D\varphi$ in the language of right $R$-modules. Thus, for a finitely presented functor $F_{\varphi/\psi}:R\lmods\to\Ab$ given by a pp pair $\varphi/\psi$, we have a corresponding functor $F_{D\psi/D\varphi}:\rmods R\to\Ab$. One can show, by using the short exact sequence resulting from \cite[10.3.6]{prest2009} and the snake lemma, that $dF_{\varphi/\psi}\cong F_{D\psi/D\varphi}$.

\begin{rmk}\thlabel{eeqf}The reason we consider this duality of pp formulas is the fact that $\mathbb{A}_\varphi(M)=\mathbb{F}_{D\varphi}(M^*)$ for any pp formula $\varphi$ in the language of left $R$-modules and any left $R$-module $M$ (see \cite[1.3.12]{prest2009}). Therefore
\begin{displaymath}
\mathbb{A}_{\varphi/\psi}=(\mathbb{F}_{D\psi/D\varphi})_*,
\end{displaymath}
which gives the next result immediately. It also proves that \textbf{(e) is equivalent to (f)} in \thref{contravarconditions}. \textbf{We have now proven \thref{contravarconditions}}.
\end{rmk}
\begin{cor}\thlabel{corollary}For any pp formula $\varphi$ in the language of left $R$-modules, the functor $\mathbb{A}_{\varphi/\psi}:(R\Lmods)^\op\to\Ab$ preserves inverse limits.
\end{cor}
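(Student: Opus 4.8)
The plan is to read this off directly from \thref{contravarconditions}, which has just been proven in full. The functor $\mathbb{A}_{\varphi/\psi}$ is by definition the annihilator functor of a pp pair in the language of left $R$-modules, so it satisfies condition (f) of \thref{contravarconditions}; since conditions (a)--(f) there are all equivalent, it therefore also satisfies (b), and preservation of inverse limits is part of (b). (One gets for free that $\mathbb{A}_{\varphi/\psi}$ restricts to a finitely presented functor on $(R\lmods)^\op$ as well, but that is not needed for the statement.)

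If one prefers an argument that does not invoke the full strength of \thref{contravarconditions}, there are shorter routes. By \thref{eeqf} we have $\mathbb{A}_{\varphi/\psi}\cong(\mathbb{F}_{D\psi/D\varphi})_*$, where $\mathbb{F}_{D\psi/D\varphi}\colon\Rmods R\to\Ab$ preserves direct limits by \thref{covarconditions}; chasing the implications (e)$\Rightarrow$(c)$\Rightarrow$(d)$\Rightarrow$(b) already established in the proof of \thref{contravarconditions} yields the conclusion. Alternatively, once \thref{pppairdual} is available, $\mathbb{A}_{\varphi/\psi}\cong\mathbb{F}_{\varphi/\psi}^{*}$, and since $\mathbb{F}_{\varphi/\psi}$ preserves direct limits (again by \thref{covarconditions}), its dual preserves inverse limits by \thref{reflection}.

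There is no genuine obstacle here: all of the real work lies in \thref{contravarconditions}, and this corollary merely isolates one of the equivalent conditions that every annihilator functor of a pp pair must satisfy. The only thing to be careful about is the order of citation — if the corollary is placed before the proof of \thref{pppairdual}, the cleanest write-up is the first one above, using only the already-completed proof of \thref{contravarconditions}.
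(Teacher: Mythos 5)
Your proposal is correct and matches the paper's proof in substance: the paper deduces the corollary from the identity $\mathbb{A}_{\varphi/\psi}=(\mathbb{F}_{D\psi/D\varphi})_*$ together with \thref{covarconditions} and \thref{contravarconditions}, which is exactly your (f)$\Rightarrow$(b) argument (the equivalence of (e) and (f) being that identity) and your second, more explicit route. Your remark about ordering is also right — the corollary precedes the proof of \thref{pppairdual}, so the third alternative should be avoided, as you note.
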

\begin{proof}This follows directly from the observation that $\mathbb{A}_{\varphi/\psi}=(\mathbb{F}_{D\psi/D\varphi})_*$, from \thref{covarconditions} and \thref{contravarconditions}.
\end{proof}
Let $F:R\lmods\to\Ab$ be a finitely presented functor. On the surface, it seems like \thref{eqdual} showed that $F^*$ can be expressed as the more complicated $(dF)_*$. However, a lot of information about $(dF)_*$ can be gotten from the model theory of modules, so in fact we can now refer to some results from there to learn about $F^*$. 
\begin{lem}\thlabel{lemma}Let $F=F_{\varphi/\psi}:R\lmods\to\Ab$ be given by a pp pair $\varphi/\psi$ (see introduction). Then $F^*_{\varphi/\psi}\cong A_{\varphi/\psi}$.
\end{lem}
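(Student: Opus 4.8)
The plan is to obtain the statement by combining the purely algebraic duality of \thref{eqdual} with the model-theoretic bookkeeping carried out earlier in this subsection. First I would observe that, since $\varphi/\psi$ is a pp pair, the functor $F_{\varphi/\psi}=\mathbb{F}_{\varphi/\psi}|_{R\lmods}:R\lmods\to\Ab$ is finitely presented (this is precisely the content of the equivalence in \thref{covarconditions} applied to the restricted functor). Hence \thref{eqdual} applies and yields a natural isomorphism
\begin{displaymath}
F_{\varphi/\psi}^*\cong (dF_{\varphi/\psi})_*
\end{displaymath}
of functors $(R\lmods)^\op\to\Ab$.

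Next I would invoke the identification $dF_{\varphi/\psi}\cong F_{D\psi/D\varphi}$ recorded in the discussion preceding \thref{eeqf} (obtained from the short exact sequence of \cite[10.3.6]{prest2009} and the snake lemma). Since the predual construction sends isomorphisms to isomorphisms, this gives $(dF_{\varphi/\psi})_*\cong (F_{D\psi/D\varphi})_*$. Finally I would use \thref{eeqf}, where it is established that $\mathbb{A}_{\varphi/\psi}=(\mathbb{F}_{D\psi/D\varphi})_*$ as functors $(R\Lmods)^\op\to\Ab$. Restricting both sides to $(R\lmods)^\op$ and noting that $M^*\in\rmods R$ is finitely presented whenever $M\in R\lmods$ — so that $(\mathbb{F}_{D\psi/D\varphi})_*$ restricts to $(F_{D\psi/D\varphi})_*$ — one gets $A_{\varphi/\psi}=(F_{D\psi/D\varphi})_*$. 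Chaining the three isomorphisms produces $F_{\varphi/\psi}^*\cong A_{\varphi/\psi}$, as required.

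There is no real obstacle here beyond bookkeeping: every ingredient is already available, and the only points to verify are that each isomorphism is natural (which they are, being instances of \thref{eqdual} and of the identities in \thref{eeqf}) and that restriction along $R\lmods\hookrightarrow R\Lmods$ commutes with taking preduals, which is immediate from the definition $(-)_*(M)=(-)(M^*)$ together with the duality $(-)^*:(R\lmods)^\op\simeq\rmods R$. If one wishes to avoid quoting \thref{eeqf} and argue directly, one can instead use \cite[1.3.12]{prest2009} to identify $(F_{D\psi/D\varphi})_*$ with $A_{\varphi/\psi}$ pointwise and check compatibility with the sub-/quotient structure of the pair $\varphi/\psi$ by hand; that compatibility check is the only mildly delicate step.
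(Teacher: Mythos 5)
Your proposal is correct, and its first step coincides with the paper's: both reduce, via \thref{eqdual}, to identifying $(dF_{\varphi/\psi})_*$ with $A_{\varphi/\psi}$. Where you diverge is in how that identification is carried out. You chain together two facts already recorded in Section \ref{annpp} — the isomorphism $dF_{\varphi/\psi}\cong F_{D\psi/D\varphi}$ and the identity $\mathbb{A}_{\varphi/\psi}=(\mathbb{F}_{D\psi/D\varphi})_*$ from \thref{eeqf} — together with the (correct) observation that restriction along $R\lmods\hookrightarrow R\Lmods$ commutes with predualisation because $(-)^*$ sends finitely presented left modules to finitely presented right modules. The paper instead works the identification out by hand: it takes the short exact sequences $0\to F_{D\theta}\to(R^n,-)\to dF_\theta\to 0$ from \cite[10.3.6]{prest2009} for $\theta=\varphi,\psi$, assembles a commutative diagram involving $A_\varphi$, $A_\psi$, $(R^n,(-)^*)$ and the preduals $(dF_\varphi)_*,(dF_\psi)_*$, and extracts $0\to A_\varphi\to A_\psi\to(dF_{\varphi/\psi})_*\to 0$ from the snake lemma. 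The trade-off is worth being aware of: the fact $dF_{\varphi/\psi}\cong F_{D\psi/D\varphi}$ that you cite is only asserted in the paper with the phrase ``one can show, by using the short exact sequence resulting from \cite[10.3.6]{prest2009} and the snake lemma'' — that is, its proof is essentially the same diagram chase the paper performs inside the proof of the lemma, just on the covariant side before predualising. So your argument is not circular (that isomorphism is an independent, standard fact about the duality $d$), but it relocates rather than removes the homological work; if a referee asked you to make the lemma self-contained you would end up reproducing something very close to the paper's snake-lemma diagram. Your version does have the virtue of making transparent that the lemma is formally forced by \thref{eqdual} plus the model-theoretic identities, and the compatibility check you flag at the end (that the sub-/quotient structures of $\mathbb{A}_\psi/\mathbb{A}_\varphi$ and $(\mathbb{F}_{D\psi}/\mathbb{F}_{D\varphi})_*$ match under \cite[1.3.12]{prest2009}) is indeed the only genuinely delicate point.
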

\begin{proof}
By \thref{eqdual}, it is enough to show that $(dF_{\psi/\varphi})_*\cong A_{\varphi/\psi}$. First, note that $A_\varphi(M)=F_{D\varphi}(M^*)$ for any finitely presented left left $R$-module $M$ and that the same holds for $\psi$. There are, by \cite[10.3.6]{prest2009}, short exact sequences
\begin{displaymath}
\xymatrix{0\ar[r]&\ar[r]F_{D\varphi}&(R^n,-)\ar[r]&dF_\varphi\ar[r]&0\\0\ar[r]&\ar[r]F_{D\psi}&(R^n,-)\ar[r]&dF_\psi\ar[r]&0.}
\end{displaymath}
Combining these results gives us a commutative diagram
\begin{displaymath}
\xymatrix{&0\ar[d]&0\ar[d]&0\ar[d]\\0\ar[r]&0\ar[d]\ar[r]&A_\psi\ar[r]\ar[d]&A_\varphi\ar[d] &\\
0\ar[r]&0\ar[r]\ar[d]&(R^n,(-)^*)\ar[d]\ar[r]&(R^n,(-)^*)\ar[d]\ar[r]&0\\
0\ar[r]&(dF_{\varphi/\psi})_*\ar[d]\ar[r]& (dF_\varphi)_*\ar[d]\ar[r]& (dF_\psi)_*\ar[d]\ar[r]&0\\
&(dF_{\varphi/\psi})^*\ar[d]\ar[r]&0\ar[d]\ar[r]&0\ar[d]\ar[r]&0
\\&0&0&0}
\end{displaymath}
with exact rows, and the snake lemma gives us a short exact sequence
\begin{displaymath}
\xymatrix{0\ar[r]&A_\varphi\ar[r]&A_\psi\ar[r]&(dF_{\varphi/\psi})^*\ar[r]&0}
\end{displaymath}
so $(dF_{\varphi/\psi})^*\cong A_\psi/A_\varphi$, as required.
\end{proof}
\begin{cor}\thlabel{meh}For any pp pair $\varphi/\psi$, $\mathbb{F}_{\varphi/\psi}^*\cong\mathbb{A}_{\varphi/\psi}$.
\end{cor}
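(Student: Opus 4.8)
\textbf{Proof proposal for \thref{meh} (i.e. \thref{pppairdual}).}

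The plan is to reduce the statement about the honest functors $\mathbb{F}_{\varphi/\psi}$ and $\mathbb{A}_{\varphi/\psi}$ on all of $R\Lmods$ to the statement about their restrictions $F_{\varphi/\psi}$ and $A_{\varphi/\psi}$ to $R\lmods$, which is exactly the content of \thref{lemma}. The key observation to set up is that both sides of the claimed isomorphism preserve inverse limits: the left-hand side $\mathbb{F}_{\varphi/\psi}^*$ does so by \thref{reflection}, since $\mathbb{F}_{\varphi/\psi}:R\Lmods\to\Ab$ preserves direct limits (this is part (c)$\Rightarrow$(a) of \thref{covarconditions}, or simply the well-known fact that pp-definable functors commute with direct limits); and the right-hand side $\mathbb{A}_{\varphi/\psi}$ preserves inverse limits by \thref{corollary}.

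First I would note that $\mathbb{A}_{\varphi/\psi}$ restricts to $A_{\varphi/\psi}$ on $(R\lmods)^\op$ by definition, and that $\mathbb{F}_{\varphi/\psi}^*$ restricts to $(F_{\varphi/\psi})^*$ on $(R\lmods)^\op$, since dualisation of a functor is computed objectwise and $\mathbb{F}_{\varphi/\psi}|_{R\lmods}=F_{\varphi/\psi}$. Then \thref{lemma} gives an isomorphism $(F_{\varphi/\psi})^*\cong A_{\varphi/\psi}$ of functors $(R\lmods)^\op\to\Ab$, naturally in the pp pair. So $\mathbb{F}_{\varphi/\psi}^*$ and $\mathbb{A}_{\varphi/\psi}$ are two functors $(R\Lmods)^\op\to\Ab$, both preserving inverse limits, whose restrictions to $(R\lmods)^\op$ are isomorphic. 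By the uniqueness of inverse-limit-preserving extensions from $(R\lmods)^\op$ to $(R\Lmods)^\op$ — that is, by \thref{unqext} applied to the common restriction — both functors are isomorphic to $\overleftarrow{A_{\varphi/\psi}}$, and hence to each other. This isomorphism is natural in $\varphi/\psi$ because all the intermediate isomorphisms are.

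I do not expect a serious obstacle here: the heavy lifting is done by \thref{lemma} (the finitely presented case, proved via the short exact sequences from \cite[10.3.6]{prest2009} and the snake lemma) and by the extension machinery of Section~\ref{contraext}. The only point requiring a little care is to make sure one is genuinely entitled to invoke \thref{unqext}: this needs that $\mathbb{F}_{\varphi/\psi}^*$ preserves inverse limits, which rests on $\mathbb{F}_{\varphi/\psi}$ preserving direct limits (\thref{reflection}), and that $\mathbb{A}_{\varphi/\psi}$ preserves inverse limits (\thref{corollary}); both are already available. An alternative, essentially equivalent route would be to chase the identity $\mathbb{A}_{\varphi/\psi}=(\mathbb{F}_{D\psi/D\varphi})_*$ from \thref{eeqf} together with \thref{eqdual} and the isomorphism $dF_{\varphi/\psi}\cong F_{D\psi/D\varphi}$ noted in Section~\ref{annpp}, but the extension-uniqueness argument above is cleaner.
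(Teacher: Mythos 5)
Your proposal is correct and is essentially the paper's own argument: establish that both $\mathbb{F}_{\varphi/\psi}^*$ and $\mathbb{A}_{\varphi/\psi}$ preserve inverse limits, note via \thref{lemma} that they agree on $(R\lmods)^\op$, and conclude by the uniqueness of inverse-limit-preserving extensions (\thref{unqext}). The only difference is cosmetic — you are slightly more explicit than the paper in checking that $\mathbb{F}_{\varphi/\psi}^*$ preserves inverse limits via \thref{reflection}, a hypothesis of \thref{unqext} that the paper leaves implicit.
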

\begin{proof}
It follows from the equivalence of (b) and (f) in \thref{contravarconditions} that $\mathbb{A}_{\varphi/\psi}$ preserves inverse limits. Since \thref{lemma} shows that $\mathbb{F}^*_{\varphi/\psi}$ and $\mathbb{A}_{\varphi/\psi}$ agree on $(R\lmods)^\op$, \thref{unqext} shows that they agree on $(R\Lmods)^\op$.
\end{proof}
\begin{rmk}One can show directly, without using (f) in \thref{contravarconditions}, that a functor of the form $\mathbb{A}_{\varphi}$ for a pp formula $\varphi$ preserves inverse limits. This gives an alternative proof that the annihilator functor $\mathbb{A}_{\varphi/\psi}$ of a pp pair $\varphi/\psi$ preserves inverse limits, which goes as follows. We know that there is a sequence
\begin{displaymath}
\xymatrix{\mathbb{A}_\varphi\ar[r]&\mathbb{A}_\psi\ar[r]&\mathbb{A}_{\varphi/\psi}\ar[r]&0}
\end{displaymath}
which is exact when evaluated at any left $R$-module. It follows from the equivalence of (b) and (d) in \thref{contravarconditions} that $\mathbb{A}_{\varphi/\psi}$ preserves inverse limits.
\end{rmk}
\section{Dualising locally finitely presented categories}
Here we extend some of our results so that they apply to contravariant functors on more general categories than just $R\Lmods$ for an artin algebra $R$. Let $A$ denote a small, additive, $K$-linear category. Despite them not being precisely the same thing (though, they are isomorphic), we write $(A,K\lmods)$ for the category of functors $A\to\Ab$ for which $Fa$ is a finitely generated $K$-module for every object $a\in A$, and similarly for $(A^\op,K\lmods)$.  Clearly we have equivalences
\begin{align*}
(A,K\lmods)^\op\to(A^\op,K\lmods)&:F\mapsto F^*\\
(A^\op,K\lmods)^\op\to(A,K\lmods)&:G\mapsto G^*.
\end{align*}
\begin{defn}\cite{ar1974} A small, additive, $K$-linear category $A$ is called a \textbf{dualising variety} (or \textbf{dualising $K$-variety}) if it satisfies each of the following:
\begin{list}{*}{â€¢}
\item[(a)]$A$ is \textbf{finite}, which means that, for any objects $a,b\in A$, the hom-set $A(a,b)$ is a finitely generated $K$-module.
\item[(b)]It is a \textbf{variety}, which means that all idempotents split in $A$ (it is also said that $A$ is \textbf{Karoubian} or \textbf{idempotent complete} when this is the case).
\item[(c)]The equivalence \begin{displaymath}
(-)^*:(A^\op,K\lmods)^\op\to(A,K\lmods)
\end{displaymath} restricts to an equivalence $(\rmods A)^\op\to A\lmods$ .
\item[(d)]The equivalence 
\begin{displaymath}
(-)^*:(A,K\lmods)^\op\to(A^\op,K\lmods)
\end{displaymath}
restricts to an equivalence $(A\lmods)^\op\to \rmods A$ .
\end{list}
\end{defn}
\begin{rmk} In order for the definition of dualising variety to make sense, note that if $A$ is finite then $A\lmods$ is a subcategory of $(A,K\lmods)$ and $\rmods A$ is a subcategory of $(A^\op,K\lmods)$.
\end{rmk}
\begin{rmk}If $A$ is a dualising variety then so is $A^\op$.
\end{rmk}
\begin{thm}\cite[2.4]{ar1974}\thlabel{varcrit} A finite variety $A$ is a dualising variety if and only if it satisfies each of the following conditions.
\begin{list}{*}{}
\item[(a)]$A$ and $A^\op$ have weak cokernels.
\item[(b)]For every object $b\in A$ there is an object $c\in A$ such that
\begin{displaymath}
A(a,b)\to \text{Hom}_{A(c,c)^\op}(A(c,a),A(c,b))
\end{displaymath}
is an isomorphism for every object $a\in A$.
\item[(c)]For every object $a\in A$ there is an object $d\in A$ such that
\begin{displaymath}
A(a,b)\to \text{Hom}_{A(d,d)}(A(b,d),A(a,d))
\end{displaymath}
is an isomorphism for every object $b\in A$.
\end{list}
\end{thm}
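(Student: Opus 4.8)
The plan is to prove both implications directly, the observation that makes this manageable being that ``dualising variety'' unwinds to a single statement: the pointwise $K$-dual $(-)^*$, which is already an \emph{exact} equivalence $(A,K\lmods)^\op\to(A^\op,K\lmods)$ as soon as $A$ is finite (its values are finitely generated, hence $K$-reflexive, $K$-modules, and $J$ is an injective cogenerator), must restrict to an equivalence $A\lmods\to\rmods A$. Since $A$ is assumed additive, \thref{fpfunc2} simplifies: a functor lies in $A\lmods$ exactly when it is the cokernel of a morphism $A(a,-)\to A(b,-)$ of representables, and dually for $\rmods A$. So everything reduces to understanding $(-)^*$ on representables and then propagating the conclusion along cokernels.

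\textbf{($\Leftarrow$).} Assume (a), (b), (c). By \thref{weakcokernels} applied to $A$ and to $A^\op$, condition (a) gives that $A\lmods$ and $\rmods A$ are abelian, in particular closed under kernels. The crux is that $(-)^*$ carries representables into the finitely presented functors. Fix $b$ and let $c$ be as in (b), and write $\End(c)=A(c,c)$; since $A$ is finite and $K$ is artinian, $\End(c)$ is an artin algebra and every $A(c,x)$ is finitely generated, hence $K$-reflexive, over it. Condition (b) identifies $A(-,b)$ with $\Hom_{\End(c)^\op}(A(c,-),A(c,b))$; applying $(-)^*$ and the identity $\Hom_E(M,N)^*\cong N^*\otimes_E M$ for finitely generated modules over an artin algebra $E$ (with $E=\End(c)^\op$) gives
\[
A(-,b)^*\;\cong\;A(c,b)^*\otimes_{\End(c)^\op}A(c,-).
\]
As $A(c,b)^*$ is finitely presented over the artin algebra $\End(c)^\op$, tensoring a finite free presentation of it with $A(c,-)$ and using right-exactness of $\otimes$ presents $A(-,b)^*$ by finite direct sums of $A(c,-)$, so $A(-,b)^*\in A\lmods$. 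Symmetrically, (c) gives $A(a,-)^*\in\rmods A$ for all $a$. Now dualise a presentation $A(-,y)\to A(-,x)\to G\to 0$ of an arbitrary $G\in\rmods A$: exactness of $(-)^*$ plus closure of $A\lmods$ under kernels forces $G^*\in A\lmods$, and symmetrically $(-)^*$ sends $A\lmods$ into $\rmods A$. On all of these finitely-generated-valued functors the canonical $\eta\colon F\to F^{**}$ is a pointwise, hence natural, isomorphism, so the two restricted functors are mutually quasi-inverse; thus $A$ is a dualising variety.

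\textbf{($\Rightarrow$).} Assume $A$ is a dualising variety, so $A^\op$ is too and $(-)^*$ restricts to equivalences $A\lmods\simeq(\rmods A)^\op$ and $\rmods A\simeq(A\lmods)^\op$. For (a): $\rmods A$ is always closed under cokernels (\thref{fpfunc1}, or \thref{auslander}(b)), hence $(\rmods A)^\op\simeq A\lmods$ is closed under kernels, i.e. $A\lmods$ is abelian; but an abelian $A\lmods$ forces $A$ to have weak cokernels, since given $f\colon a\to b$ the kernel of $(f,-)\colon A(b,-)\to A(a,-)$ in $A\lmods$ is finitely presented, so it receives an epimorphism from some $A(c,-)$, and by Yoneda the corresponding morphism $b\to c$ is a weak cokernel of $f$. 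Applying this to $A^\op$ as well yields (a). For (b) and (c): $A(-,b)^*\in A\lmods$ is finitely generated, so there is an epimorphism $A(c,-)\twoheadrightarrow A(-,b)^*$ (a direct sum of representables collapses to one because $A$ is additive); dualising this, using $A(-,b)^{**}\cong A(-,b)$, and unwinding the $\End(c)$-module structure on $A(c,-)$ recovers the isomorphism asserted in (b), and (c) is its $A^\op$-version.

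\textbf{Main obstacle.} The formal steps — abelianness yielding weak cokernels, and propagation along cokernels and along $\eta$ — are routine. The real work is the two-way translation between the concrete module-theoretic conditions (b)/(c) and the functor-categorical statement that $(-)^*$ preserves finite presentation of representables: one must track carefully which module lies over $\End(c)$ versus $\End(c)^\op$ and verify the relevant hom--tensor--dual identities over these endomorphism rings, which is exactly where finiteness of $A$ (making each $\End(c)$ an artin algebra and all functor values $K$-reflexive) is used in an essential way.
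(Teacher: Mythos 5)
First, a point of reference: the paper does not prove this statement at all --- it is quoted verbatim from Auslander--Reiten \cite[2.4]{ar1974} --- so there is no internal proof to compare yours against. Judged on its own terms, your strategy is the right one (and is essentially Auslander--Reiten's): reduce everything to the single question of whether $(-)^*$, which is already an exact, reflexive duality on finitely-generated-valued functors because $A$ is finite over the artinian $K$, carries representables to finitely presented functors, and then propagate along cokernels/kernels using \thref{auslander}. Your ($\Leftarrow$) direction is sound: condition (b) plus the identity $\Hom_E(M,N)^*\cong N^*\otimes_E M$ (valid here since $\End(c)$ is an artin algebra and $A(c,a)$ is finitely presented over it) gives $A(-,b)^*\cong A(c,b)^*\otimes_{\End(c)^\op}A(c,-)$, a finite free presentation of $A(c,b)^*$ then exhibits $A(-,b)^*\in A\lmods$, and weak cokernels give the closure under kernels needed to push this to all of $\rmods A$. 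Your derivation of (a) from the dualising property, via the kernel of $(f,-)$ receiving an epimorphism from a representable, is also correct.

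The one genuine gap is in recovering (b) from the dualising property. You say: ``there is an epimorphism $A(c,-)\twoheadrightarrow A(-,b)^*$; dualising this \dots recovers the isomorphism asserted in (b).'' An epimorphism alone is not enough. Unwinding the module structures, condition (b) for the witness $c$ is equivalent to the canonical natural transformation
\begin{displaymath}
\mu\colon A(-,b)^*(c)\otimes_{\End(c)^\op}A(c,-)\longrightarrow A(-,b)^*
\end{displaymath}
being an isomorphism. An epimorphism $A(c,-)\twoheadrightarrow A(-,b)^*$ gives surjectivity of $\mu$ (equivalently, injectivity of the map in (b), via the cogenerator property of $J$), but not injectivity of $\mu$ (equivalently, surjectivity of the map in (b)): nothing rules out extra elements of the tensor product that die in $A(-,b)^*$. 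The repair is immediate from what you already have: since $A(-,b)^*\in A\lmods$ it admits a presentation $A(c_1,-)\to A(c_0,-)\to A(-,b)^*\to 0$; take $c=c_0\oplus c_1$ (padding so that both terms become finite direct sums of copies of $A(c,-)$), note that $\mu_F$ is an isomorphism when $F$ is a finite direct sum of copies of $A(c,-)$, and apply right-exactness of both sides in $F$ together with the five lemma. You should also make explicit the identification $\Hom_{\End(c)^\op}(A(c,a),A(c,b))\cong\bigl(A(c,b)^*\otimes_{\End(c)^\op}A(c,a)\bigr)^*$ and check that the canonical maps match under it --- this is the same hom--tensor--dual bookkeeping you perform in the other direction, so it is a matter of writing it out rather than a new idea.
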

\begin{cor}If $A$ is a dualising variety then $A\lmods$ is an abelian subcategory of $A\Lmods$ and $\rmods A$ is an abelian subcategory of $\Rmods A$.
\end{cor}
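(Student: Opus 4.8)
The plan is to reduce this to \thref{weakcokernels} by way of the characterisation of dualising varieties provided in \thref{varcrit}. First I would note that, by definition, a dualising variety $A$ is in particular a \emph{small additive} category, and that $A^\op$ is therefore also small and additive. This is exactly the ambient hypothesis required by \thref{weakcokernels}, so the only remaining ingredient is the existence of weak cokernels.

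Next I would invoke \thref{varcrit}. Since $A$ is a dualising variety it is in particular a finite variety, so one direction of \thref{varcrit} (the ``only if'' direction) applies and yields condition (a) there: both $A$ and $A^\op$ have weak cokernels. Applying \thref{weakcokernels} to $A$ then gives immediately that $A\lmods$ is an abelian subcategory of $A\Lmods$. For the second assertion I would recall the conventions fixed at the start of this section, namely $\Rmods A = A^\op\Lmods$ and $\rmods A = A^\op\lmods$; applying \thref{weakcokernels} to the small additive category $A^\op$, which also has weak cokernels, shows that $\rmods A = A^\op\lmods$ is an abelian subcategory of $A^\op\Lmods = \Rmods A$, as required.

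There is essentially no obstacle here: the result is a direct corollary. The only points that need a moment's care are (i) verifying that smallness and additivity are genuinely built into the definition of a dualising variety, so that the hypotheses of \thref{weakcokernels} are met, and (ii) noting that \thref{varcrit} is stated as an equivalence, so that being a dualising variety does force the weak-cokernel condition on both $A$ and $A^\op$.
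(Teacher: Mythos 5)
Your argument is correct and coincides with the paper's proof, which likewise deduces the statement immediately from \thref{varcrit} (giving weak cokernels in both $A$ and $A^\op$) and \thref{weakcokernels}. The extra care you take over smallness, additivity, and the direction of the equivalence in \thref{varcrit} is fine but not needed beyond what the paper already assumes.
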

\begin{proof}This follows immediately from \thref{varcrit} and \thref{weakcokernels}.
\end{proof}
\begin{ex}It is clear from Section \ref{dualfun} that if $R$ is an artin algebra then $R\lmods$ is a dualising variety.
\end{ex}
\begin{ex}\thlabel{proj}It follows from \thref{varcrit} that, for any artin algebra $R$, $R\text{-}\textbf{proj}$, the category of finitely generated projective left $R$-modules, is a dualising variety (see \cite[2.5]{ar1974}).
\end{ex}
\begin{thm}\cite[2.6]{ar1974} If $A$ is a dualising variety then so is $A\lmods$.
\end{thm}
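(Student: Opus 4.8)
The plan is to verify directly the four conditions in the definition of dualising variety for $B:=A\lmods$. Conditions (a) and (b) are quickly disposed of. For (a), let $F,G\in B$ and choose, via \thref{fpfunc1} (valid since $A$ is small and additive), a presentation $A(Q,-)\to A(P,-)\to F\to 0$ with $P,Q\in A$. Applying the left exact functor $(A\Lmods)(-,G)$ and the Yoneda lemma exhibits $B(F,G)=\Hom_{A\Lmods}(F,G)$ as a $K$-submodule of $GP$; since $A$ is finite, $GP$ is a finitely generated $K$-module, and $K$ is artinian hence Noetherian, so $B(F,G)$ is finitely generated. For (b), $A$ has weak cokernels by \thref{varcrit}(a), so $B=A\lmods$ is abelian by \thref{weakcokernels}; abelian categories are idempotent complete, so $B$ is a variety. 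Then $B^\op$ is abelian too, and applying \thref{weakcokernels} to $B$ and to $B^\op$ shows that $B\lmods$ and $\rmods B=(B^\op)\lmods$ are abelian, in particular closed under kernels and cokernels.

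For (c) and (d), use that $(-)^*$ is already an equivalence $(B,K\lmods)^\op\to(B^\op,K\lmods)$ (and likewise the other way round), as $B$ is a small additive $K$-linear category. It therefore suffices to show that $(-)^*$ sends $B\lmods$ into $\rmods B$ and $\rmods B$ into $B\lmods$, the required restrictions then being equivalences automatically (via $\Psi\cong\Psi^{**}$). By \thref{fpfunc1}, applied to $B$ and to its opposite (replacing these by skeleta if needed), every object of $B\lmods$ is the cokernel of a morphism $B(F_1,-)\to B(F_0,-)$ and every object of $\rmods B$ is the cokernel of a morphism $B(-,F_1)\to B(-,F_0)$, with $F_i\in B$; since $(-)^*$ is exact, dualising turns these into kernels, so, using that $\rmods B$ and $B\lmods$ are closed under kernels, it is enough to prove, for every $F\in B$, that $B(F,-)^*\colon B^\op\to\Ab$ is finitely presented and that $B(-,F)^*\colon B\to\Ab$ is finitely presented.

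Fix $F\in B$ and a presentation $A(Q,-)\to A(P,-)\to F\to 0$ in $B$ with $P,Q\in A$. Applying $(A\Lmods)(-,G)$ and Yoneda identifies $B(F,-)$ with the kernel of the natural transformation $\text{ev}_P\to\text{ev}_Q$ between the evaluation functors $B\to\Ab$, and dualising then identifies $B(F,-)^*$ with the cokernel of $(\text{ev}_Q)^*\to(\text{ev}_P)^*$. Since $\rmods B$ is closed under cokernels, it remains to see that $(\text{ev}_P)^*\colon B^\op\to\Ab$ is finitely presented for each $P\in A$. The crucial point is that, unwinding the definition of the dual of a functor, $(\text{ev}_P)^*$ coincides with the composite $B^\op\xrightarrow{(-)^*}\rmods A\xrightarrow{\text{ev}_P}\Ab$, in which the first arrow is the equivalence provided by condition (d) for $A$ and the second is evaluation at $P$ of a functor $A^\op\to\Ab$. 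But this second functor is representable — it is $\Hom_{\rmods A}(A(-,P),-)$ by Yoneda, with $A(-,P)\in\rmods A$ — hence finitely presented, and precomposing with an equivalence preserves finite presentation. So $B(F,-)^*$ is finitely presented. The statement for $B(-,F)^*$ follows by applying the case just proved to the dualising variety $A^\op$: under the equivalence $B^\op\simeq\rmods A$ of condition (d) for $A$, the functor $B(-,F)$ corresponds to the representable functor $\Hom_{\rmods A}(F^*,-)$ on $\rmods A=(A^\op)\lmods$, so the result already obtained — now for $A^\op$, whose condition (d) is condition (c) for $A$ — gives that $\Hom_{\rmods A}(F^*,-)^*$, and hence $B(-,F)^*$, is finitely presented.

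The only genuinely substantive step is the identification, in the previous paragraph, of the dual of an evaluation functor on $A\lmods$ with a representable functor on $\rmods A$ through the duality of condition (d) for $A$; once that is in hand, everything else is bookkeeping with variances and with the closure of finitely presented functors under kernels and cokernels. Alternatively, one could run the proof through the criterion \thref{varcrit} for $B$: its condition (a) is immediate from $B$ and $B^\op$ being abelian, while its conditions (b) and (c) unwind to the same observation about duals of evaluation functors; I expect the approach above to involve the lighter bookkeeping.
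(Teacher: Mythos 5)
The paper does not actually prove this statement; it imports it from Auslander--Reiten \cite[2.6]{ar1974}, so there is no in-paper argument to measure yours against. Your direct verification of the four conditions for $B=A\lmods$ is correct. Conditions (a) and (b) are handled as expected: the embedding $B(F,G)\hookrightarrow GP$ obtained from a presentation of $F$, together with $GP$ being finitely generated over the Noetherian ring $K$, gives (a), and abelianness of $A\lmods$ (via weak cokernels and \thref{weakcokernels}) gives (b). The reduction of (c) and (d) to the two inclusions $(B\lmods)^*\subseteq\rmods B$ and $(\rmods B)^*\subseteq B\lmods$, and thence to duals of representables, is legitimate because $B\lmods$ and $\rmods B$ are abelian subcategories of the ambient functor categories and so absorb the kernels and cokernels produced by dualising presentations, while $\Psi\cong\Psi^{**}$ upgrades the two inclusions to mutually inverse equivalences. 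The one substantive step --- that $(\mathrm{ev}_P)^*\colon B^\op\to\Ab$ is the composite of the equivalence $(-)^*\colon(A\lmods)^\op\to\rmods A$ from condition (d) for $A$ with the representable functor $\Hom_{\rmods A}(A(-,P),-)$, hence finitely presented --- is verified correctly on both objects and morphisms, and the bootstrap to $B(-,F)^*$ by running the same argument for $A^\op$ (whose condition (d) is condition (c) for $A$) involves no circularity, since the first half used only condition (d) of the ambient dualising variety. This is in the spirit of Auslander and Reiten's original proof, which likewise comes down to showing that the dual of a Hom-functor on $A\lmods$ is finitely presented; your choice to verify the definition directly rather than invoke the criterion of \thref{varcrit} is a reasonable alternative, at the cost of slightly more bookkeeping with variances.
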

Now we go on to see how these concepts give a strengthening of the main results.
\begin{defn}We say that an additive $K$-linear category $C$ is a \textbf{dlfp $K$-linear category} (or just \textbf{dlfp category}) if $C$ is locally finitely presented and $C^\text{fp}$ is a dualising $K$-variety.
\end{defn}
\begin{rmk}Here dlfp is an abbreviation for ``dualising locally finitely presented''.
\end{rmk}
\begin{cor}If $C$ is a dlfp category then so are $C^\text{fp}\Lmods$ and $\Rmods C^\text{fp}$.
\end{cor}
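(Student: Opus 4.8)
The plan is to reduce everything to the two quoted structural facts about functor categories over dualising varieties. First I would set $A=C^{\text{fp}}$. By hypothesis $A$ is a dualising $K$-variety, so in particular it is skeletally small, additive and $K$-linear; replacing it by a small skeleton (which changes neither $A\Lmods$ up to equivalence nor its finitely presented objects) I may assume $A$ is small. Then \thref{fpfunc1} tells me that $A\Lmods$ is locally finitely presented and that $(A\Lmods)^{\text{fp}}=A\lmods$, the category of finitely presented functors $A\to\Ab$; moreover $A\Lmods$ is itself $K$-linear because $A$ is (the $K$-module structure being computed pointwise).

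Next I would invoke the theorem of Auslander--Reiten quoted above, \cite[2.6]{ar1974}: since $A$ is a dualising variety, so is $A\lmods$. As $A\lmods=(A\Lmods)^{\text{fp}}$ and $A\Lmods$ is locally finitely presented and $K$-linear, this says precisely that $A\Lmods=C^{\text{fp}}\Lmods$ is a dlfp category.

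Finally, for $\Rmods C^{\text{fp}}=(C^{\text{fp}})^{\op}\Lmods$ I would use the remark that the opposite of a dualising variety is again a dualising variety: $(C^{\text{fp}})^{\op}$ is a dualising $K$-variety, so applying the previous paragraph with $A$ replaced by $(C^{\text{fp}})^{\op}$ gives that $(C^{\text{fp}})^{\op}\Lmods=\Rmods C^{\text{fp}}$ is dlfp as well.

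There is no genuine obstacle here; the argument is just an assembly of the cited results. The only points requiring a moment's care are the identification of $(A\Lmods)^{\text{fp}}$ with the category $A\lmods$ appearing in the statement of \cite[2.6]{ar1974}, and the harmless passage from the skeletally small $C^{\text{fp}}$ to an honestly small category so that \thref{fpfunc1} applies verbatim.
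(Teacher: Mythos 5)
Your proof is correct and is exactly the argument the paper intends (the corollary is stated there without proof, following immediately from \cite[2.6]{ar1974}, the remark that the opposite of a dualising variety is again one, and \thref{fpfunc1}). The two points you flag — identifying $(A\Lmods)^{\fp}$ with $A\lmods$ and the smallness bookkeeping — are handled appropriately and are the only things that needed saying.
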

\begin{thm}Suppose $C$ is a locally finitely presented category. Then for every additive functor $G:C^\text{fpop}\to\Ab$, there is a unique functor $\overleftarrow{G}:C^\op\to\Ab$ which preserves inverse limits such that $\overleftarrow{G}|_{C^\text{fpop}}\cong G$. In particular, $\overleftarrow{G}$ is given by
\begin{displaymath}
\overleftarrow{G}c=(C(-,c)|_{C^\text{fpop}},G).
\end{displaymath}
\end{thm}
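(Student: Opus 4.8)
The plan is to adapt, almost verbatim, the proofs of \thref{xyz}, \thref{extofrest} and \thref{unqext}, since none of those arguments used anything about $R\Lmods$ beyond the formal features of a locally finitely presented category: that $C^\fp$ is skeletally small (so that the functor category $\Rmods C^\fp=(C^\text{fpop},\Ab)$ has small hom-groups), that the functor $\Phi:C\to\Rmods C^\fp$ sending $c$ to $C(-,c)|_{C^\text{fpop}}$ preserves direct limits (because each $C(d,-)$ with $d\in C^\fp$ does), and that every object of $C$ is a direct limit of finitely presented objects.

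First I would establish existence. Define $\overleftarrow{G}:C^\op\to\Ab$ on objects by $\overleftarrow{G}c=(C(-,c)|_{C^\text{fpop}},G)$, the hom-group computed in $\Rmods C^\fp$, and on morphisms by precomposition; this is legitimate precisely because $C^\fp$ is skeletally small. Then $\overleftarrow{G}$ is the composite
\begin{displaymath}
\xymatrix{C^\op\ar[r]^-{\Phi^\op}&(\Rmods C^\fp)^\op\ar[rr]^-{(-,G)}&&\Ab,}
\end{displaymath}
in which $\Phi^\op$ preserves inverse limits (as $\Phi$ preserves direct limits) and the representable functor $(-,G)$ preserves inverse limits; hence $\overleftarrow{G}$ preserves inverse limits. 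The Yoneda lemma supplies a natural isomorphism $\overleftarrow{G}|_{C^\text{fpop}}\cong G$.

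For uniqueness, suppose $G':C^\op\to\Ab$ preserves inverse limits and $G'|_{C^\text{fpop}}\cong G$. Imitating the proof of \thref{extofrest}, I would define for each $c\in C$ a homomorphism $\theta_c:G'c\to(C(-,c)|_{C^\text{fpop}},G'|_{C^\text{fpop}})$ sending $x\in G'c$ to the natural transformation with component $C(d,c)\to G'd$, $f\mapsto G'(f)(x)$ at $d\in C^\fp$. This is natural in $c$, and by the Yoneda lemma it is an isomorphism whenever $c\in C^\fp$. Since every object of $C$ is a direct limit of finitely presented objects (an inverse limit of such in $C^\op$) and both $G'$ and $\overleftarrow{G'|_{C^\text{fpop}}}$ preserve inverse limits, \thref{lemmaf}(b) forces $\theta$ to be a natural isomorphism, whence $G'\cong\overleftarrow{G'|_{C^\text{fpop}}}\cong\overleftarrow{G}$.

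I do not anticipate a genuine obstacle here: relative to the $R\Lmods$ case the only new point is the bookkeeping needed to see that $\overleftarrow{G}$ is a well-defined $\Ab$-valued functor, and that rests entirely on the skeletal smallness of $C^\fp$, which is part of the definition of a locally finitely presented category. Everything else is formal and goes through unchanged.
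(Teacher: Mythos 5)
Your proposal is correct and is exactly what the paper intends: its proof of this theorem is the one-line remark that it is a straightforward generalisation of \thref{unqext}, and your argument carries out precisely that generalisation, transporting the proofs of \thref{xyz}, \thref{extofrest} and \thref{unqext} from $R\Lmods$ to a general locally finitely presented category using only skeletal smallness of $C^\fp$, the direct-limit-preservation of $\Phi$, and \thref{lemmaf}.
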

\begin{proof}
This is a straightforward generalisation of \thref{unqext}.
\end{proof}
\begin{thm}\thlabel{woosh}Let $C$ be a $K$-linear locally finitely presented category. For any additive functor $F:C^\fp\to\Ab$ and any object $c\in C$, there is an isomorphism
\begin{displaymath}
\left(\overrightarrow{F}c\right)^*\cong \overleftarrow{F^*}c
\end{displaymath}
which is natural in $F$ and $c$.
\end{thm}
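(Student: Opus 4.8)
The plan is to recognise the claimed isomorphism as nothing more than an instance of the hom-tensor duality for functor categories, namely \thref{homtenspreadd}. Unwinding the definitions, $\overrightarrow{F}c=C(-,c)|_{C^{\fp\op}}\otimes_{C^\fp}F$ and $\overleftarrow{F^*}c=(C(-,c)|_{C^{\fp\op}},F^*)$, where $F^*\in\Rmods C^\fp$ is the dual of $F\in C^\fp\Lmods$ and the bracket denotes the hom-group taken in the functor category $\Rmods C^\fp$. Thus the assertion is precisely
\begin{displaymath}
\left(C(-,c)|_{C^{\fp\op}}\otimes_{C^\fp}F\right)^{*}\cong\left(C(-,c)|_{C^{\fp\op}},F^*\right),
\end{displaymath}
and what must be shown is that this holds naturally in $F$ and $c$.

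First I would dispose of a size issue: since $C$ is locally finitely presented, $C^\fp$ is skeletally small, so we may replace it by a small full subcategory without changing any of the constructions involved, and hence assume $C^\fp$ is a small $K$-linear category. Then I would apply \thref{homtenspreadd} with $A=C^\fp$, $M=F\in A\Lmods$ and $N=C(-,c)|_{C^{\fp\op}}\in\Rmods A$: the isomorphism $(N\otimes_A M)^*\cong(N,M^*)$ provided there is exactly the displayed isomorphism (the remaining term $(M,N^*)$ of \thref{homtenspreadd} plays no role here). Note that all the terms are legitimately objects of $\Rmods C^\fp$ because $C$, and hence $C^\fp$, is $K$-linear, so $F$ factors through $K\Lmods$ and $F^*$ is a well-defined functor $C^{\fp\op}\to\Ab$.

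It then remains to pin down naturality. Naturality of the isomorphism of \thref{homtenspreadd} in the variable $M$ gives naturality in $F$ at once. For naturality in $c$, observe that $c\mapsto C(-,c)|_{C^{\fp\op}}$ is a functor $C\to\Rmods C^\fp$ (it is the Yoneda embedding $c\mapsto C(-,c)$ followed by restriction to $C^{\fp\op}$), so naturality of \thref{homtenspreadd} in the variable $N$ specialises to naturality in $c$. I do not expect a genuine obstacle here: the argument is a single application of an already-established lemma, and the only things needing care are keeping track of the two variances and the reduction to a small model of $C^\fp$.
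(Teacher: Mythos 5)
Your proposal is correct and is essentially the paper's own proof: unwind the definitions of $\overrightarrow{F}c$ and $\overleftarrow{F^*}c$ and apply the functor-category hom-tensor duality $(N\otimes_A M)^*\cong(N,M^*)$ with $A=C^\fp$, $M=F$ and $N=C(-,c)|_{C^{\fp\op}}$, with naturality inherited from that lemma. (The paper's proof nominally cites \thref{tensyoneda}, but the isomorphism actually used is the one from \thref{homtenspreadd}, which is exactly the lemma you invoke.)
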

\begin{proof}
By \thref{tensyoneda}, there are isomorphisms
\begin{align*}
\left(\overrightarrow{F}c\right)^*&=(C(-,c)|_{C^{\fp\op}}\otimes_{C^\fp}F)^*
\\&\cong (C(-,c)|_{C^{\fp\op}},F^*)
\\&=\overleftarrow{F^*}c
\end{align*}
which are natural in $F$ and $c$.
\end{proof}
\begin{cor}Let $C$ be a dlfp category. For any finitely presented functor $G:C^\text{fpop}\to\Ab$ and any object $c\in C$, there is an isomorphism
\begin{displaymath}
\overleftarrow{G}c\cong \left(\overrightarrow{G^*}c\right)^*
\end{displaymath}
natural in both $c$ and $G$.
\end{cor}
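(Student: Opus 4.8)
The plan is to read \thref{woosh} ``backwards'' and feed in the functor $G^*$, using only the fact that in a dualising variety the canonical double-dual map is an isomorphism on finitely presented functors. So first I would set $F=G^*:C^\fp\to\Ab$ (this makes sense since $G:C^{\fp\op}\to\Ab$, so $G^*$ is a functor on $(C^{\fp\op})^\op=C^\fp$). Note that $G^*$ need not be assumed finitely presented, which is harmless because \thref{woosh} applies to \emph{any} additive functor $C^\fp\to\Ab$.

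Next I would record why $G\cong G^{**}$ naturally. Since $C$ is dlfp, $A\defeq C^\fp$ is a dualising $K$-variety; in particular $A$ is finite, so every finitely presented functor $C^{\fp\op}\to\Ab$ takes values in finitely generated $K$-modules (it is a cokernel of a map between finite sums of representables, each of which has such values). Thus $G$ is a genuine object of $\rmods A=A^\op\lmods$, on which the equivalences in parts (c) and (d) of the definition of dualising variety act; these are mutually pseudo-inverse via the canonical natural transformations $\eta$, so $\eta_G:G\to G^{**}$ is an isomorphism, natural in $G$. Applying the (functorial) operation $\overleftarrow{(-)}$ — functoriality being exactly the $\overleftarrow{\alpha}$ construction used in the $R\Lmods$ setting, available since $\overleftarrow{G}c=(C(-,c)|_{C^{\fp\op}},G)$ is evidently covariant in $G$ — we get a natural isomorphism $\overleftarrow{G}\cong\overleftarrow{G^{**}}$.

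Finally I would assemble the chain
\begin{displaymath}
\overleftarrow{G}c\cong\overleftarrow{G^{**}}c=\overleftarrow{(G^*)^*}c\cong\left(\overrightarrow{G^*}c\right)^*,
\end{displaymath}
where the last isomorphism is \thref{woosh} applied to $F=G^*$, read in the form $\overleftarrow{F^*}c\cong(\overrightarrow{F}c)^*$. Each isomorphism in the chain is natural in $c$ and in $G$ (for the last one, naturality in $G^*$ transports to naturality in $G$ because $(-)^*$ is a functor), so the composite is natural in both $c$ and $G$, as claimed. I do not expect a genuine obstacle here; the only points needing care are the bookkeeping of the several $\op$'s and the verification that $G$ really lies in $\rmods C^\fp$ so that $\eta_G$ is invertible — that is the one place where finiteness of the dualising variety is used.
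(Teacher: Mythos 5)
Your proposal is correct and follows essentially the same route as the paper: apply \thref{woosh} to $F=G^*$ to get $\left(\overrightarrow{G^*}c\right)^*\cong\overleftarrow{G^{**}}c$, and then use the natural isomorphism $G\cong G^{**}$ coming from the dualising-variety hypothesis on $C^\fp$. Your extra remarks on why $\eta_G$ is invertible and on the functoriality of $\overleftarrow{(-)}$ are just a more explicit spelling-out of what the paper leaves implicit.
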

\begin{proof}
Since $C^\fp$ is a dualising variety and $G$ is finitely presented, there is an isomorphism $G^{**}\cong G$ which is natural in $G$.

By \thref{woosh} there are isomorphism
\begin{align*}
\left(\overrightarrow{G^*}c\right)^*\cong\overleftarrow{G^{**}}c\cong \overleftarrow{G}c
\end{align*}
which are natural in both $c$ and $G$.
\end{proof}
\begin{cor}[Generalisation of \thref{rightex}]\thlabel{rightexactgen}If $C$ is a dlfp category, $G,G',G'':C^\text{fpop}\to\Ab$ are finitely presented additive functors and 
\begin{displaymath}
G'\to G''\to G\to 0
\end{displaymath}
is an exact sequence, then the induced sequence
\begin{displaymath}
\overleftarrow{G'}\to \overleftarrow{G''}\to \overleftarrow{G}\to 0
\end{displaymath}
is exact when evaluated at any object in $C$.
\end{cor}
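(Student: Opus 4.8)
The plan is to mimic the proof of \thref{rightex} as closely as possible, replacing the concrete category $R\Lmods$ by a general dlfp category $C$ and using the abstract dualising-variety machinery in place of the module-theoretic facts. The key observation is that in the original proof, the only things used were: (i) the extension formula $\overleftarrow{G}M\cong(G_*,-\otimes M)^*$ of \thref{extensionformula}, which rewrites $\overleftarrow{G}$ as the dual of a hom-functor into which $G$ enters only through a hom-into-$G_*$; (ii) the fact that the predual/dual operations are exact because $J$ is an injective cogenerator; and (iii) right-exactness of tensor. So first I would set up the analogue of the extension formula: for a finitely presented $G:C^{\fpop}\to\Ab$, using that $C^\fp$ is a dualising variety, we have $G^{**}\cong G$ naturally, and \thref{woosh} gives $\overleftarrow{G}c\cong\overleftarrow{G^{**}}c\cong(\overrightarrow{G^*}c)^*$ — this is exactly the preceding corollary in the excerpt, so it may be quoted directly.

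Next I would exploit right-exactness. Starting from the exact sequence $G'\to G''\to G\to 0$ in $\rmods C^\fp = C^{\fpop}\lmods$, apply the dualisation equivalence $(-)^*:(C^{\fpop}\lmods)^{\op}\simeq (C^\fp\lmods) = C^\fp\lmods$ — wait, more carefully, since $C^\fp$ is a dualising variety, dualisation gives an equivalence $(\rmods C^\fp)^\op\simeq (C^\fp)\lmods$ (this is part (c)/(d) of the dualising variety definition applied to $A = C^\fp$, or rather to $A\lmods$ via the theorem that $A\lmods$ is again dualising). Being an equivalence, it is exact, so the sequence $0\to G^*\to G''^*\to G'^*$ is exact in $C^\fp\lmods$. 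Then for any $c\in C$ the functor $\overrightarrow{(-)}c = C(-,c)|_{C^{\fpop}}\otimes_{C^\fp}(-)$ is right exact — indeed $C(-,c)|_{C^{\fpop}}$ is a flat object of $\Rmods C^\fp$ by \cite[Section 1.4]{crawleyboevey1994} as used in \thref{extex} — so from $0\to G^*\to G''^*\to G'^*$ we need left-exactness of tensoring with a flat module, which holds, giving $0\to\overrightarrow{G^*}c\to\overrightarrow{G''^*}c\to\overrightarrow{G'^*}c$ exact. Finally apply $(-)^*$, which is exact since $J$ is an injective cogenerator of $K\Lmods$, to obtain $\overrightarrow{G'^*}c^*\to\overrightarrow{G''^*}c^*\to\overrightarrow{G^*}c^*\to 0$ exact, and identify this via the corollary preceding the statement with $\overleftarrow{G'}c\to\overleftarrow{G''}c\to\overleftarrow{G}c\to 0$, naturally in $c$. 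Chasing naturality of all the isomorphisms through then yields the claim.

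I expect the main obstacle to be bookkeeping of which dualising-variety equivalence is being invoked and verifying that all the intermediate functors actually land in the finitely-presented subcategories where the equivalences are defined — i.e. checking $G^*, G''^*, G'^*$ are genuinely in $C^\fp\lmods$ (not just in $(C^\fp, K\lmods)$) so that the equivalence applies and exactness is literal rather than just objectwise. This is handled by the theorem in the excerpt stating that $A\lmods$ is a dualising variety whenever $A$ is, applied with $A = C^\fp$; combined with the corollary that in a dualising variety $A$, $A\lmods$ is abelian and closed under kernels, so $G^* = \Ker(G''^*\to G'^*)$ stays finitely presented. The flatness of $C(-,c)|_{C^{\fpop}}$ and the exactness of $(-)^*$ are exactly as in the artin algebra case and need no change. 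A secondary nuisance is that the statement asks only for exactness "when evaluated at any object in $C$", so I do not need $\overleftarrow{G'}\to\overleftarrow{G''}\to\overleftarrow{G}\to0$ to be exact as a sequence of functors, only pointwise, which is precisely what the above argument produces; one should just remark that all the isomorphisms used are natural in $c$ so the sequence of natural transformations is well-defined.

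\begin{proof}
This is a direct generalisation of the proof of \thref{rightex}. Since $C^\fp$ is a dualising variety, so is $C^\fp\lmods = \rmods C^\fp$ (as $C^\fp\lmods$ is dualising whenever $C^\fp$ is, and dualising varieties are closed under $(-)^\op$), and in particular $\rmods C^\fp$ is an abelian subcategory of $\Rmods C^\fp$ closed under kernels, and dualisation restricts to an exact equivalence $(-)^*:(\rmods C^\fp)^\op\to C^\fp\lmods$.

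Apply this equivalence to the given exact sequence $G'\to G''\to G\to 0$ in $\rmods C^\fp$: it yields an exact sequence
\begin{displaymath}
\xymatrix{0\ar[r]&G^*\ar[r]&G''^*\ar[r]&G'^*}
\end{displaymath}
in $C^\fp\lmods$, all terms finitely presented. Now fix $c\in C$. The object $C(-,c)|_{C^{\fpop}}\in\Rmods C^\fp$ is flat \cite[Section 1.4]{crawleyboevey1994}, so tensoring with it is exact on short exact sequences and in particular left exact; hence
\begin{displaymath}
\xymatrix{0\ar[r]&\overrightarrow{G^*}c\ar[r]&\overrightarrow{G''^*}c\ar[r]&\overrightarrow{G'^*}c}
\end{displaymath}
is exact, naturally in $c$. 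Dualising, and using that $(-)^*$ is exact because $J$ is an injective cogenerator of $K\Lmods$, gives an exact sequence
\begin{displaymath}
\xymatrix{\left(\overrightarrow{G'^*}c\right)^*\ar[r]&\left(\overrightarrow{G''^*}c\right)^*\ar[r]&\left(\overrightarrow{G^*}c\right)^*\ar[r]&0,}
\end{displaymath}
again natural in $c$. By the preceding corollary, for each finitely presented $H:C^{\fpop}\to\Ab$ there is an isomorphism $\overleftarrow{H}c\cong(\overrightarrow{H^*}c)^*$ natural in $c$ and $H$; applying this to $H = G,G',G''$ identifies the last sequence with
\begin{displaymath}
\xymatrix{\overleftarrow{G'}c\ar[r]&\overleftarrow{G''}c\ar[r]&\overleftarrow{G}c\ar[r]&0.}
\end{displaymath}
Since all isomorphisms involved are natural in $c$, this is the sequence induced by $G'\to G''\to G\to 0$, and it is exact at every $c\in C$, as required.
\end{proof}
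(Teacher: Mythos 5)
Your argument is correct and is exactly the paper's: the paper's proof reads only ``Follows directly from \thref{extex} and \thref{woosh}'', and your dualise--tensor--dualise chain (exactness of $(-)^*$ on finitely presented functors over a dualising variety, flatness of $C(-,c)|_{C^{\fp\op}}$, exactness of $(-)^*$ on $K\Lmods$, then the identification $\overleftarrow{G}c\cong(\overrightarrow{G^*}c)^*$) is precisely the unpacking of those two results. The only blemish is the parenthetical identification $C^\fp\lmods=\rmods C^\fp$, which is false as written (one is covariant, the other contravariant, finitely presented functors) but also unnecessary: all you need is the exact equivalence $(-)^*:(\rmods C^\fp)^\op\to C^\fp\lmods$ supplied directly by the definition of a dualising variety applied to $C^\fp$.
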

\begin{proof}
Follows directly from \thref{extex} and \thref{woosh}.
\end{proof}
We now have the following generalisation of (some parts of) \thref{contravarconditions}, which follows by a similar argument.
\begin{thm}\thlabel{genconcon}Let $C$ be a dlfp category and let $G:C^\op\to\Ab$ be any additive functor. The following are equivalent:
\begin{list}{*}{}
\item[(a)]$G\cong F^*$ for some functor $F:C\to\Ab$ which preserves direct limits and is finitely presented when restricted to $C^\text{fp}$.
\item[(b)]$G$ preserves inverse limits and is finitely presented when restricted to $C^\text{fpop}$.
\item[(c)]There are objects $a,b\in C^\text{fp}$ and a sequence of natural transformations
\begin{displaymath}
\xymatrix{C(-,a)\to C(-,b)\to G\to 0}
\end{displaymath}
which is exact when evaluated at any object.
\item[(d)]There is a sequence of natural transformations 
\begin{displaymath}
G'\to G''\to G\to 0
\end{displaymath}
which is exact when evaluated at any object, where the functors $G',G'':C\to\Ab$ preserve inverse limits and are finitely presented when restricted to $C^\text{fpop}$.
\end{list}
\end{thm}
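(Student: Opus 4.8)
The plan is to re-run the proof of \thref{contravarconditions}(a)--(d) in this more general setting, with \thref{woosh} and \thref{rightexactgen} replacing the module-level dualities of Section \ref{dualfun}, the uniqueness statements for $\overrightarrow{(-)}$ (\thref{genunqext}) and for $\overleftarrow{(-)}$ replacing their $R\Lmods$-versions, and the two dualising-variety axioms for $C^\fp$ replacing the duality between finitely presented modules. Throughout I identify ``$F$ finitely presented when restricted to $C^\fp$'' with $F|_{C^\fp}\in C^\fp\lmods$, and ``$G$ finitely presented when restricted to $C^{\fp\op}$'' with $G|_{C^{\fp\op}}\in\rmods C^\fp$; since $C^\fp$ is finite these subcategories lie inside $(C^\fp,K\lmods)$ and $(C^{\fp\op},K\lmods)$ respectively, so the dualising-variety axioms apply verbatim.

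First I would prove \textbf{(a) $\Leftrightarrow$ (b)}. If $G\cong F^*$ with $F$ preserving direct limits and $F|_{C^\fp}\in C^\fp\lmods$, then $F^*$ preserves inverse limits by the same monadicity argument as in \thref{reflection} (it uses only that $(-)^*\colon(K\Lmods)^\op\to K\Lmods$ is monadic, hence preserves and reflects inverse limits, together with the fact that the forgetful functor $K\Lmods\to\Ab$ creates limits and colimits), and $G|_{C^{\fp\op}}\cong(F|_{C^\fp})^*$ lies in $\rmods C^\fp$ by condition (d) of the definition of dualising variety applied to $C^\fp$. Conversely, given (b), set $G_0=G|_{C^{\fp\op}}$ and $F=\overrightarrow{G_0^*}$; by condition (c) of the dualising-variety definition $G_0^*\in C^\fp\lmods$, so $F$ preserves direct limits and $F|_{C^\fp}\cong G_0^*$ is finitely presented. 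By \thref{woosh}, $F^*c=(\overrightarrow{G_0^*}c)^*\cong\overleftarrow{G_0^{**}}c$, and $G_0^{**}\cong G_0$ because $G_0$ takes values in finitely generated $K$-modules; thus $F^*\cong\overleftarrow{G_0}$. Since $G$ preserves inverse limits and restricts to $G_0$ on $C^{\fp\op}$, the uniqueness of the inverse-limit-preserving extension forces $G\cong\overleftarrow{G_0}\cong F^*$.

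Next, for \textbf{(b) $\Rightarrow$ (c)}, I would choose $a,b\in C^\fp$ and a presentation $C(-,a)|_{C^{\fp\op}}\to C(-,b)|_{C^{\fp\op}}\to G_0\to 0$ of $G_0$ as a finitely presented right $C^\fp$-module, apply $\overleftarrow{(-)}$, and invoke \thref{rightexactgen} to obtain a sequence $\overleftarrow{C(-,a)|_{C^{\fp\op}}}\to\overleftarrow{C(-,b)|_{C^{\fp\op}}}\to\overleftarrow{G_0}\to 0$ exact at every object of $C$. The analogue of \thref{littlething}, namely $\overleftarrow{C(-,a)|_{C^{\fp\op}}}\cong C(-,a)$ as functors $C^\op\to\Ab$, follows from \thref{faithext} applied to the restricted Yoneda functor $C\to(C^{\fp\op},\Ab)$, $c\mapsto C(-,c)|_{C^{\fp\op}}$, which preserves direct limits, sends finitely presented objects to representable functors, and is fully faithful on $C^\fp$. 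Identifying terms and using $\overleftarrow{G_0}\cong G$ (again by uniqueness of the inverse-limit-preserving extension) gives a sequence of the shape required in (c). Then \textbf{(c) $\Rightarrow$ (d)} is immediate: for $a\in C^\fp$ the representable $C(-,a)\colon C^\op\to\Ab$ preserves inverse limits and restricts on $C^{\fp\op}$ to a representable, hence finitely presented, functor, so one may take $G'=C(-,a)$ and $G''=C(-,b)$.

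Finally, for \textbf{(d) $\Rightarrow$ (b)}: restricting the given sequence to $C^{\fp\op}$ and using that finitely presented functors are closed under cokernels shows $G|_{C^{\fp\op}}$ is finitely presented. Since $G'$ and $G''$ preserve inverse limits and restrict to $G'|_{C^{\fp\op}}$ and $G''|_{C^{\fp\op}}$, uniqueness of the inverse-limit-preserving extension gives $G'\cong\overleftarrow{G'|_{C^{\fp\op}}}$ and $G''\cong\overleftarrow{G''|_{C^{\fp\op}}}$; applying \thref{rightexactgen} to $G'|_{C^{\fp\op}}\to G''|_{C^{\fp\op}}\to G|_{C^{\fp\op}}\to 0$ then produces an exact sequence $G'\to G''\to\overleftarrow{G|_{C^{\fp\op}}}\to 0$, whence $G\cong\Coker(G'\to G'')\cong\overleftarrow{G|_{C^{\fp\op}}}$ preserves inverse limits. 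I expect the only genuinely delicate points to be bookkeeping ones: making the finite-presentation hypotheses coincide exactly with membership in $C^\fp\lmods$ and $\rmods C^\fp$ so that the dualising-variety axioms can be quoted, and verifying the full faithfulness of the restricted Yoneda functor (the substitute for \thref{littlething}); no idea beyond those already in Section \ref{proof} should be required.
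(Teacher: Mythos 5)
Your proposal is correct and takes essentially the same route the paper intends: the paper's own ``proof'' of \thref{genconcon} is just the remark that it ``follows by a similar argument'' to \thref{contravarconditions}, and your write-up is precisely that argument carried out, substituting the dualising-variety axioms for the module dualities and invoking \thref{woosh}, \thref{rightexactgen}, \thref{reflection}, \thref{faithext} and the generalised uniqueness of inverse-limit-preserving extensions in the same places the paper uses their $R\Lmods$-versions. The bookkeeping points you flag (finitely presented functors on a finite variety land in $(C^\fp,K\lmods)$, and full faithfulness of the restricted Yoneda functor as the substitute for \thref{littlething}) are exactly the ones the paper relies on, and you resolve them correctly.
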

\begin{cor}Let $C$ be a dlfp. Suppose $G:C^\op\to\Ab$ is an additive functor such that $G|_{C^{\fp\op}}$ is finitely presented. If $G|_{C^{\fp\op}}:C^{\fp\op}\to\Ab$ is faithful then $G$ is faithful.  

If $a\in C^{\fp}$ is an cogenerator for $C^\fp$ then it is also a cogenerator for $C$.
\end{cor}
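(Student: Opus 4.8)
The plan is to transport the hypotheses across the two dualities supplied by \thref{genconcon} and \thref{woosh}, reducing the first assertion to \thref{faithext}. Write $H=G|_{C^{\fp\op}}$. Since $H$ is finitely presented and $C^{\fp}$ is a dualising variety, its dual $H^{*}\colon C^{\fp}\to\Ab$ is again finitely presented (this is exactly the duality of finitely presented functors built into the definition of a dualising $K$-variety, the analogue of \thref{propaimpb}) and $H^{**}\cong H$. Put $F=\overrightarrow{H^{*}}\colon C\to\Ab$; by the lemma following \thref{extdef} it preserves direct limits and $F|_{C^{\fp}}\cong H^{*}$, and \thref{woosh} gives $\bigl(\overrightarrow{H^{*}}c\bigr)^{*}\cong\overleftarrow{H^{**}}c\cong\overleftarrow{H}c$, so $\overleftarrow{H}\cong F^{*}$. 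Provided $G$ preserves inverse limits (which is automatic when $G$ is representable, the case needed below, and is the condition under which \thref{genconcon}(a)$\Leftrightarrow$(b) identifies $G$ with an $F^{*}$), we have $G\cong\overleftarrow{H}\cong F^{*}$.

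Next I would push faithfulness through both applications of $(-)^{*}$. Because $J$ is a cogenerator of $K\Lmods$, a nonzero $K$-linear map has nonzero dual; hence for a $K$-linear functor $E$, faithfulness of $E$ forces faithfulness of $E^{*}$, and faithfulness of $E^{*}$ forces faithfulness of $E$. Applying the first implication to $H$ shows $H^{*}$ is faithful. The values of $F|_{C^{\fp}}\cong H^{*}$ are finitely generated $K$-modules (the hom-modules of the finite category $C^{\fp}$ are finitely generated over the artinian, hence noetherian, ring $K$, so a finitely presented functor on $C^{\fp}$ has finitely generated values, and dualising preserves this), which are precisely the finitely presented objects of $K\Lmods$; and $F$ preserves direct limits. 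So \thref{faithext}, taken with target $K\Lmods$, shows that $F\colon C\to\Ab$ is faithful. One more application of the cogenerator remark, now to $F$, yields that $F^{*}\cong G$ is faithful, which is the first assertion.

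For the second assertion, recall that an object $a$ of an additive category $B$ is a cogenerator exactly when the representable functor $B(-,a)\colon B^{\op}\to\Ab$ is faithful. So take $G=C(-,a)$: it preserves inverse limits, and $G|_{C^{\fp\op}}=C^{\fp}(-,a)$ is representable, hence finitely presented. If $a$ is a cogenerator of $C^{\fp}$ then $G|_{C^{\fp\op}}$ is faithful, so by the first part $C(-,a)$ is faithful, i.e. $a$ is a cogenerator of $C$. The only genuinely delicate point is the reduction $G\cong\overleftarrow{G|_{C^{\fp\op}}}$ in the first paragraph, where one must know $G$ preserves inverse limits; everything else is the bookkeeping of \thref{faithext} together with the repeated use of the fact that dualising by the cogenerator $J$ neither creates nor destroys zero morphisms.
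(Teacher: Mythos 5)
Your proof is correct and follows essentially the same route as the paper's: realise $G\cong F^*$ for a direct-limit-preserving $F$ that is finitely presented (hence finitely generated $K$-module valued) on $C^\fp$, transfer faithfulness from $G|_{C^{\fp\op}}$ to $F|_{C^\fp}$ using that $J$ is a cogenerator of $K\Lmods$, apply \thref{faithext} with target $K\Lmods$, and dualise back, with the second assertion obtained by feeding the representable $C(-,a)$ into the first. The one place you are more careful than the paper is the proviso that $G$ preserve inverse limits: the paper's proof silently invokes \thref{genconcon} to obtain $G\cong F^*$, which requires exactly that hypothesis (not listed among the corollary's stated assumptions), and you rightly observe both that it is needed and that it holds automatically for the representable functor used in the second assertion.
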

\begin{proof}
By \thref{genconcon}, $G\cong F^*$ for a functor $F:C\to\Ab$ which preserves direct limits and is finitely presented when restricted to $C^\fp$. We can view $F$ as a functor $C\to K\Lmods$. Since $F|_{C^\fp}$ is finitely presented and $C^\fp$ is a finite variety, it follows from \thref{fpfunc1} that $Fc$ is a finitely presented $K$-module.

The functor $F|_{C^\fp}$ is faithful, since for any morphism $f$ in $C^\fp$,
\begin{displaymath}
Ff=0\Rightarrow (Ff)^*\Rightarrow Gf=0\Rightarrow f=0.
\end{displaymath}
It follows from \thref{faithext} that $F$ is faithful. Therefore, for any morphism $g$ in $C$,
\begin{displaymath}
Gg=0\Rightarrow (Fg)^*=0\Rightarrow Fg=0\Rightarrow g=0.
\end{displaymath}
Therefore $G$ is faithful. 

The second statement follows from the first by considering $G=C(-,a)|_{C^\fp}$.
\end{proof}
The ``existence'' statement in \thref{genconcon}(a) can be strengthened to an ``existence and uniqueness'' statement by the following result.

\begin{prop}Let $C$ be a dlfp. Suppose $F_1:C\to\Ab$ is an additive functor which preserves direct limits and restricts to a finitely presented functor on $C^\fp$. If an additive functor $F_2:C\to\Ab$ is finitely presented when restricted to $C^\fp$ then
\begin{displaymath}
F_1^*\cong F_2^*\Rightarrow F_1\cong F_2.
\end{displaymath}
\end{prop}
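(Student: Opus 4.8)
The strategy is to show that the hypotheses force $F_2$ to preserve direct limits; once this is known, both $F_1$ and $F_2$ are recovered from their restrictions to $C^\fp$ — which turn out to be isomorphic — and \thref{genunqext} finishes the job.

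First I would note that $F_1^*$ preserves inverse limits: this is the implication (a)$\Rightarrow$(b) of \thref{genconcon} applied to $G=F_1^*$ (or, equivalently, the version of \thref{reflection} with $R\Lmods$ replaced by a general $K$-linear category). Since $F_1^*\cong F_2^*$, the functor $F_2^*$ then also preserves inverse limits. The crucial point is now that this forces $F_2$ itself to preserve direct limits, and here I would reuse the argument behind \thref{reflection}: writing $F_2=UF_2'$ with $F_2':C\to K\Lmods$ the $K$-linear factorisation and $U:K\Lmods\to\Ab$ the forgetful functor, the functor $F_2^*$ coincides with $U\circ(-)^*\circ(F_2')^\op$, and since $U$ together with the monadic functor $(-)^*:(K\Lmods)^\op\to K\Lmods$ both preserve and reflect all limits, $F_2^*$ preserving inverse limits forces $(F_2')^\op$, hence $F_2'$, hence $F_2$, to preserve direct limits. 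I expect this reflecting step to be the only non-formal part of the proof: the work lies in checking that the monadicity argument of \thref{monadic}/\thref{reflection}, stated there for module categories, applies verbatim with $K$ in place of $R$ and an arbitrary $K$-linear category in place of $R\Lmods$.

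For the last step I would restrict the isomorphism $F_1^*\cong F_2^*$ to $C^{\fp\op}$, obtaining $(F_1|_{C^\fp})^*\cong(F_2|_{C^\fp})^*$. Both $F_1|_{C^\fp}$ and $F_2|_{C^\fp}$ lie in $C^\fp\lmods$ by hypothesis, and since $C^\fp$ is a dualising variety the equivalence $(-)^*:(C^\fp\lmods)^\op\to\rmods C^\fp$ is fully faithful and hence reflects isomorphisms, so $F_1|_{C^\fp}\cong F_2|_{C^\fp}$. Finally, $F_1$ and $F_2$ are two direct-limit-preserving functors $C\to\Ab$ whose restrictions to $C^\fp$ are isomorphic, so $F_1\cong\overrightarrow{F_1|_{C^\fp}}\cong\overrightarrow{F_2|_{C^\fp}}\cong F_2$ by \thref{genunqext}, as required.
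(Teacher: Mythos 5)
Your proof is correct and follows essentially the same route as the paper's: use the fact that $(-)^*$ preserves and reflects the relevant limits to deduce that $F_2$ preserves direct limits, deduce $F_1|_{C^\fp}\cong F_2|_{C^\fp}$ from the duality on finitely presented functors over the dualising variety $C^\fp$, and conclude by the uniqueness of direct-limit-preserving extensions (\thref{genunqext}). Your phrasing of the middle step via ``the equivalence reflects isomorphisms'' is just the paper's chain $F_1|_{C^\fp}\cong F_1|_{C^\fp}^{**}\cong F_2|_{C^\fp}^{**}\cong F_2|_{C^\fp}$ in different words.
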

\begin{proof}By \thref{reflection}, $F_1^*:C^\op\to\Ab$ preserves inverse limits, so again by \thref{reflection}, $F_2$ preserves direct limits. Therefore, $F_1|_{C^\fp}\cong F_1|_{C^\fp}^{**}\cong F_2|_{C^\fp}^{**}\cong F_2|_{C^\fp}$, and so, by \thref{genunqext}, $F_1\cong F_2$.
\end{proof}
\begin{ex}\thlabel{examp}In \cite[1.4]{crawleyboevey1994}, a one-to-one correspondence is established between small additive categories with split idempotents and locally finitely presented additive categories. This correspondence sends a small additive category $A$ with split idempotents to the category $\textbf{Flat}\text{-}A$ of all flat objects in $\Rmods A$, and sends a locally finitely presented additive category to its category of finitely presented objects. The category of finitely presented objects in $\textbf{Flat}\text{-}A$ is the category of all finitely presented projectives in $\Rmods A$. Therefore, for any module category, the category of flat modules is locally finitely presented, and the finitely presented projective modules are the finitely presented objects in the category of flat objects.

It follows from these facts and \thref{proj} that $\textbf{Flat}\text{-}R$ is a dlfp category when $R$ is an artin algebra.
\end{ex}
\begin{rmk}The one-to-one correspondence mentioned in \thref{examp} induces a one-to-one correspondence between dualising varieties and dlfp categories (in the sense that two $K$-linear categories should be considered to be ``the same'' iff there is a $K$-linear equivalence between them).
\end{rmk}

\end{document}